\documentclass[a4paper,12pt]{amsart}

\usepackage[mathcal]{euscript}   
\usepackage{mathrsfs}

\usepackage[shortlabels]{enumitem}
\usepackage[matrix,arrow]{xy}
\usepackage{comment}
\usepackage{amssymb,enumitem,verbatim,stmaryrd,xcolor,microtype,graphicx,needspace}
\usepackage[T1]{fontenc}
\usepackage[utf8]{inputenc}
\usepackage[english]{babel} 

\usepackage[top=3.3cm,bottom=3.3cm,left=3.25cm,right=3.25cm]{geometry}
\usepackage[bookmarksdepth=2,linktoc=page,colorlinks,linkcolor={red!80!black},citecolor={red!80!black},urlcolor={blue!80!black}]{hyperref}

\usepackage{tikz}\usetikzlibrary{matrix,arrows,decorations.markings}
\usepackage{tikz-cd}
\usetikzlibrary{automata,positioning,decorations.pathreplacing}
\newcommand\cprime\textquotesingle 

\usepackage[figuresleft]{rotating}
\usepackage{changepage}

\usepackage{resizegather}
\usepackage{multicol}
\tikzset{->-/.style={decoration={markings,mark=at position #1 with {\color{black}\arrow{>}}},postaction={decorate,very thick}}}
\tikzstyle{vertex}=[circle, draw, inner sep=0pt, minimum size=6pt]
\newcommand{\vertex}{\node[vertex]}

\usepackage[mathcal]{euscript}                               
\usepackage{mathptmx}                                        
\usepackage[colorinlistoftodos,bordercolor=orange,backgroundcolor=orange!20,linecolor=orange,textsize=footnotesize]{todonotes}\setlength{\marginparwidth}{2,5cm} \makeatletter \providecommand \@dotsep{5} \def\listtodoname{List of Todos} \def\listoftodos{\@starttoc{tdo}\listtodoname} \makeatother 

\allowdisplaybreaks 

\usepackage{etoolbox}
\makeatletter
\patchcmd{\@startsection}{\@afterindenttrue}{\@afterindentfalse}{}{}             
\patchcmd{\part}{\bfseries}{\bfseries\LARGE}{}{}
\patchcmd{\section}{\scshape}{\bfseries}{}{}\renewcommand{\@secnumfont}{\bfseries} 
\patchcmd{\@settitle}{\uppercasenonmath\@title}{\large}{}{}
\patchcmd{\@setauthors}{\MakeUppercase}{}{}{}
\addto{\captionsenglish}{} 
\addto{\captionsenglish}{} 
\addto{\captionsenglish}{} 
\makeatother

\usepackage{fancyhdr}

\pagestyle{fancy}
\fancyhead{}
\fancyfoot{}
\fancyhead[OR,EL]{\footnotesize \thepage}
\fancyhead[OC]{\footnotesize Hecke eigenspaces for the projective line}
\fancyhead[EC]{\footnotesize Roberto Alvarenga and Nans Bonnel}
\setlength{\headheight}{12pt}

\theoremstyle{plain}
\newtheorem{thm}{Theorem}[section]
\newtheorem{cor}[thm]{Corollary}
\newtheorem{lemma}[thm]{Lemma}
\newtheorem{prop}[thm]{Proposition}
\newtheorem{thmA}{Theorem}  

\newtheorem*{thm*}{Theorem}
\newtheorem*{lem*}{Lemma}
\newtheorem{conj}[thm]{Conjecture}

\theoremstyle{definition}
\newtheorem{df}[thm]{Definition}
\newtheorem{rem}[thm]{Remark}

\newtheorem*{df*}{Definition}
\newtheorem*{ex*}{Example}
\newtheorem*{rem*}{Remark}

\setcounter{tocdepth}{1}   
\DeclareRobustCommand{\gobblefour}[5]{}    

\DeclareFontFamily{OT1}{pzc}{}                                
\DeclareFontShape{OT1}{pzc}{m}{it}{<-> s * [1.10] pzcmi7t}{}
\DeclareMathAlphabet{\mathpzc}{OT1}{pzc}{m}{it}
\DeclareSymbolFont{sfoperators}{OT1}{bch}{m}{n} \DeclareSymbolFontAlphabet{\mathsf}{sfoperators} \makeatletter\def\operator@font{\mathgroup\symsfoperators}\makeatother 
\DeclareSymbolFont{cmletters}{OML}{cmm}{m}{it}
\DeclareSymbolFont{cmsymbols}{OMS}{cmsy}{m}{n}
\DeclareSymbolFont{cmlargesymbols}{OMX}{cmex}{m}{n}
\DeclareMathSymbol{\myjmath}{\mathord}{cmletters}{"7C}     \let\jmath\myjmath 
\DeclareMathSymbol{\myamalg}{\mathbin}{cmsymbols}{"71}     
\DeclareMathSymbol{\mycoprod}{\mathop}{cmlargesymbols}{"60}\let\coprod\mycoprod
\DeclareMathSymbol{\myalpha}{\mathord}{cmletters}{"0B}     \let\alpha\myalpha 
\DeclareMathSymbol{\mybeta}{\mathord}{cmletters}{"0C}      \let\beta\mybeta
\DeclareMathSymbol{\mygamma}{\mathord}{cmletters}{"0D}     \let\gamma\mygamma
\DeclareMathSymbol{\mydelta}{\mathord}{cmletters}{"0E}     \let\delta\mydelta
\DeclareMathSymbol{\myepsilon}{\mathord}{cmletters}{"0F}   \let\epsilon\myepsilon
\DeclareMathSymbol{\myzeta}{\mathord}{cmletters}{"10}      \let\zeta\myzeta
\DeclareMathSymbol{\myeta}{\mathord}{cmletters}{"11}       \let\eta\myeta
\DeclareMathSymbol{\mytheta}{\mathord}{cmletters}{"12}     \let\theta\mytheta
\DeclareMathSymbol{\myiota}{\mathord}{cmletters}{"13}      \let\iota\myiota
\DeclareMathSymbol{\mykappa}{\mathord}{cmletters}{"14}     \let\kappa\mykappa
\DeclareMathSymbol{\mylambda}{\mathord}{cmletters}{"15}    \let\lambda\mylambda
\DeclareMathSymbol{\mymu}{\mathord}{cmletters}{"16}        \let\mu\mymu
\DeclareMathSymbol{\mynu}{\mathord}{cmletters}{"17}        \let\nu\mynu
\DeclareMathSymbol{\myxi}{\mathord}{cmletters}{"18}        \let\xi\myxi
\DeclareMathSymbol{\mypi}{\mathord}{cmletters}{"19}        \let\pi\mypi
\DeclareMathSymbol{\myrho}{\mathord}{cmletters}{"1A}       \let\rho\myrho
\DeclareMathSymbol{\mysigma}{\mathord}{cmletters}{"1B}     \let\sigma\mysigma
\DeclareMathSymbol{\mytau}{\mathord}{cmletters}{"1C}       \let\tau\mytau
\DeclareMathSymbol{\myupsilon}{\mathord}{cmletters}{"1D}   \let\upsilon\myupsilon
\DeclareMathSymbol{\myphi}{\mathord}{cmletters}{"1E}       \let\phi\myphi
\DeclareMathSymbol{\mychi}{\mathord}{cmletters}{"1F}       \let\chi\mychi
\DeclareMathSymbol{\mypsi}{\mathord}{cmletters}{"20}       \let\psi\mypsi
\DeclareMathSymbol{\myomega}{\mathord}{cmletters}{"21}     \let\omega\myomega
\DeclareMathSymbol{\myvarepsilon}{\mathord}{cmletters}{"22}\let\varepsilon\myvarepsilon
\DeclareMathSymbol{\myvartheta}{\mathord}{cmletters}{"23}  \let\vartheta\myvartheta
\DeclareMathSymbol{\myvarpi}{\mathord}{cmletters}{"24}     \let\varpi\myvarpi
\DeclareMathSymbol{\myvarrho}{\mathord}{cmletters}{"25}    \let\varrho\myvarrho
\DeclareMathSymbol{\myvarsigma}{\mathord}{cmletters}{"26}  \let\varsigma\myvarsigma
\DeclareMathSymbol{\myvarphi}{\mathord}{cmletters}{"27}    \let\varphi\myvarphi

\DeclareMathOperator{\GL}{GL}

\newcommand\A{{\mathbb A}}

\newcommand\C{{\mathbb C}}

\newcommand\FF{{\mathbb F}}

\newcommand\N{{\mathbb N}}

\renewcommand\P{{\mathbb P}}

\newcommand\Z{{\mathbb Z}}

\newcommand\cA{{\mathcal A}}

\newcommand\cE{{\mathcal E}}

\newcommand\cG{{\mathcal G}}

\newcommand\cH{{\mathcal H}}

\newcommand\cN{{\mathcal N}}
\newcommand\cO{{\mathcal O}}

\newcommand\cR{{\mathcal R}}

\newcommand\cV{{\mathcal V}}

\newcommand{\Fq}{\mathbb{F}_q}

\newcommand\id{\textup{id}}
\renewcommand\geq{\geqslant}
\renewcommand\leq{\leqslant}

\newcommand{\norm}[1]{|#1|}

\renewcommand\emptyset\varnothing

\title{ Hecke eigenspaces for the projective line }

\author{Roberto Alvarenga}
\address{\rm Roberto Alvarenga, São Paulo State University (UNESP), Campus São José do Rio Preto, Brazil.}
\email{roberto.alvarenga@unesp.br}

\author{Nans Bonnel}
\address{\rm Nans Bonnel, Ecole Normale Superieur Paris-Saclay, France.}
\email{nans.bonnel@ens-paris-saclay.fr}

\allowdisplaybreaks

\begin{document}

\maketitle

\begin{abstract} In this article we investigate the action of (ramified and unramified) Hecke operators on automorphic forms for the function field of the projective line defined over $\Fq$ and for the group $\GL_2$. We first compute the dimension of the Hecke eigenspaces for every generator of the unramified Hecke algebra. Thus, we consider the ramification in a point of degree one and explicitly describe  the action of certain ramified Hecke operators on automorphic forms. Moreover, we also compute the dimensions of its eigenspaces for those ramified Hecke operators. We finish the article considering more general ramifications, namely  those one attached to a closed point of higher degree. 
\end{abstract} 

\tableofcontents

\section{Introduction}


 Hecke operators and its action on automorphic forms play a key role in the context of modern number theory. For instance, the Modularity Theorem  states that there is an automorphic form attached to each rational elliptic curve, moreover, that this automorphic form is actually a Hecke eigenform.  Also known as Taniyama–Shimura-Weil conjecture, the Modularity Theorem was proved by Wiles in \cite{wiles} (with a key step given by joint work with Taylor \cite{taylor-wiles-95}) for semi-stable elliptic curves, completing the proof of the Fermat Last Theorem after more than 350 years. The Modularity Theorem was  completely proved by Breuil, Conrad, Diamond and Taylor in \cite{bcdt-01}. More generally, the Hecke operators and its action on the space of automorphic forms play a central role in the geometric Langlands correspondence, proved by Drinfeld and Lafforgue  for $\GL_n$ in  \cite{drinfeld83} and \cite{lafforgue-02}. 

Motivated by previous examples, this work is concerned with the action of Hecke operators on automorphic forms. Namely, we consider the situation where these objects are defined over the projective line attached to a finite field and for $\GL_2$. We actually consider both ramified and unramified Hecke operators, describe its action on the space of automorphic forms by building its graphs of Hecke operators,  and use that to calculate the dimension of Hecke eigenforms space.

Followingly, we give precise definitions for the main objects that are considered throughout the article and state our main results. 
Let $q$ be a prime power and $\Fq$ be the finite field with $q$ elements. Even though from the next section onward we shall be interested in the case where $X = \mathbb P^1$ defined over $\Fq$ (and thus $F = \Fq(t)$), we introduce in this section the main objects of this work for any $X$ a geometrically irreducible  smooth projective curve defined over $\Fq$ and $F$ its function field. Let $g$ stand for the genus of $X$ and $|X|$ be the set of closed points of $X$ or, equivalently, the set of places in $F$.
For $x \in |X|$, we denote by $F_x$ the completion of $F$ at $x$, by $\mathcal{O}_x$ the ring of integers of $F_x$, by $\pi_x \in \mathcal{O}_x$ (we can assume $\pi_x \in F$) a uniformizer of $x$ and by $q_x$ the cardinal of the residue field $\kappa(x):=\mathcal{O}_x/(\pi_x) \cong \mathbb{F}_{q_x}.$ Moreover, we denote by $|x|$  the degree of $x$, which is defined by the extension field degree $[\kappa(x) : \Fq]$. In other words, $q_x = q^{|x|}$. 
Let $| \cdot |_x$ the absolute value of $F_x$ (resp. $F$) such that $|\pi_x|_x = q_{x}^{-1},$ we call $| \cdot |_x$ the local norm for each $x \in |X|$.

Let $\mathbb{A}$ be the adele ring of $F$ and $\mathbb{A}^{\times} $ the idele group. We denote $\mathcal{O}_{\mathbb{A}} := \prod \mathcal{O}_x $ where the product is
taken over all places $x$ of $F$. We might assume $F_x$ being embedded into the adele ring $\mathbb{A}$ by sending an element $a \in F_x$ to the adele $(a_y)_{y \in |X|}$ with $a_x = a$ and $a_y = 0$ for $y \neq x$. Let $G(\mathbb{A}):= \mathrm{GL}_n(\mathbb{A})$, $Z$ be the center of  $G(\mathbb{A})$, $G(F):= \mathrm{GL}_n(F)$,  and $K:= \mathrm{GL}_n(\mathcal{O}_{\mathbb{A}})$ the standard maximal compact open subgroup of $G(\mathbb{A})$. For $x \in |X|$, consider $G_x=\GL_n(F_x)$ and $Z_{x}$ the center of $G_x$. Note that $G(\mathbb{A})$ comes together with the adelic topology that turns $G(\mathbb{A})$ into a locally compact group. Hence $G(\mathbb{A})$ is endowed with a Haar measure. We fix the Haar measure on $G(\mathbb{A})$ for which $\mathrm{vol}(K)=1.$ The topology of $G(\mathbb{A})$ has a neighborhood basis $\mathcal{V}$ of the identity matrix that is given by all subgroups
$$K' = \prod_{x \in |X|} K_{x}' < \prod_{x \in |X|}K_x = K$$
where $K_x := \mathrm{GL}_n(\mathcal{O}_x)$, such that for all $x \in |X|$ the subgroup $K_{x}'$ of $K_x$ is open and consequently of finite index and such that $K_{x}^{'}$ differs from $K_x$ only for a finite number of places.

\subsection*{The Hecke algebra} Let $C^0(G(\mathbb{A}))$ be the space of continuous and $\C$-valuated functions on $G(\mathbb{A})$. A function in $C^0(G(\mathbb{A}))$ is called smooth if it is locally constant. We might now define one of the main objects of this work.

\begin{df} \label{def-hecke} The complex vector space $\mathcal{H}$ of all smooth compactly supported functions $\Phi : G(\mathbb{A}) \rightarrow \C$ together with the convolution product
$$\Phi_1 \ast \Phi_2: g \longmapsto \int_{G(\mathbb{A})} \Phi_1(gh^{-1})\Phi_2(h)dh$$
for $\Phi_1, \Phi_2 \in \mathcal{H}$ is called the Hecke algebra for $G(\mathbb{A})$. Its elements are called Hecke operators.
\end{df}

The zero element of $\mathcal{H}$ is the zero function, but there is no multiplicative unit. 
For $K' \in \mathcal{V},$ we define $\mathcal{H}_{K'}$ to be the subalgebra of all (left and right) bi-$K'$-invariant elements. These subalgebras have multiplicative units. Namely, the normalized characteristic function $\epsilon_{K'} := (\mathrm{vol}K')^{-1} \mathrm{char}_{K'}$ acts as the identity on $\mathcal{H}_{K'}$ by convolution. 

\begin{df} When $K'=K$ above, we call $\mathcal{H}_K$ the unramified (or spherical) part of $\mathcal{H}$, and its elements are called unramified (or spherical) Hecke operators. For $K' \in \mathcal{V},\ K'\neq K$, $\mathcal{H}_{K'}$ is called the ramified part of $\mathcal{H}$, and its elements are called   ramified Hecke operators.
\end{df}

It is well known that every $\Phi \in \mathcal{H}$ is bi-$K'$-invariant for some $K' \in \mathcal{V}$, cf.\ \cite[Prop. 1.4.4]{oliver-thesis}. In particular, 
$$\mathcal{H} = \bigcup_{K' \in \mathcal{V}} \mathcal{H}_{K'}.$$

\subsection*{Automorphic Forms} The group $G(\mathbb{A})$ acts on $C^0(G(\mathbb{A}))$ through the right regular representation
$$\rho : G(\mathbb{A}) \rightarrow \mathrm{Aut}(C^0(G(\mathbb{A}))),$$ 
which is defined by right translation of the argument: $(g.f)(h):= (\rho(g)f)(h) := f(hg)$ for $g,h \in G(\mathbb{A})$ and $f \in C^0(G(\mathbb{A})).$ Hence we have the induced action of the Hecke algebra $\mathcal{H}$  on $C^{0}(G(\mathbb{A}))$ given by
$$\Phi(f): g \longmapsto \int_{G(\mathbb{A})} \Phi(h) f(gh) dh.$$

We say that a function $f \in C^{0}(G(\mathbb{A}))$ is $\mathcal{H}$-finite if the space $\mathcal{H}\cdot f$ is finite dimensional. 
A function $f \in C^{0}(G(\mathbb{A}))$ is called $K$-finite if the complex vector space that is generated by $\{k.f\}_{k \in K}$ is finite dimensional. 

We embed $G(\mathbb{A}) \hookrightarrow \mathbb{A}^{n^2 +1}$ via $g \mapsto (g, \det(g)^{-1}).$ We define a local height $\norm{ g_x}_x$ on $G(F_x) :=\mathrm{GL}_n(F_x)$ by 
restricting the height function 
$$(v_1, \ldots, v_{n^2+1}) \mapsto \mathrm{max}\{|v_1|_x, \ldots, |v_{n^2+1}|_x\}$$ 
on $F_{x}^{n^2+1}$. We note that $\norm{ g_x }_x \geq 1$ and that $\norm{ g_x}_x =1$ if $g_x \in K_x.$ We define the global height $\norm{ g}$ to be the product of the local heights. We say that $f \in C^{0}(G(\mathbb{A}))$ is of moderate growth if there exists constants $C$ and $N$ such that
$$|f(g)|_{\C} \leq C \norm{ g}^{N}$$  
for all $g \in G(\mathbb{A}).$

\begin{df} The space of automorphic forms $\mathcal{A}$ (with trivial central character) is the complex vector space of all functions $f \in C^{0}(G(\mathbb{A}))$, which are smooth, $K$-finite, of moderate growth,  left $G(F)$-invariant and $\mathcal{H}$-finite. Its elements are called automorphic forms. 
\end{df}

For $K' \in \cV$ and every subspace $V \subseteq \mathcal{A},$ let $V^{K'}$ be the subspace of all $f \in V$ that are right $K'$-invariant.  The above action of $\mathcal{H}$ on $C^{0}(G(\mathbb{A}))$ restricts to an action of $\mathcal{H}_{K'}$ on $V^{K'}$.  
Since every automorphic form $f \in \mathcal{A}$ is  right $K'$-invariant for some $K' \in \mathcal{V}$ (cf.\ \cite[Prop. 1.3.7]{oliver-thesis}), $\mathcal{A} = \bigcup_{K' \in \mathcal{V}} \mathcal{A}^{K'}$. 
Hence, functions on $\mathcal{A}^{K'}$ (i.e.\ every automorphic form) can be identified with functions on the double coset $G(F) \setminus G(\mathbb{A})/Z K'$ 
that are of moderate growth.
The space $\cA^{K}$ is known as the space of unramified automorphic forms. If $f \in \cA^{K'}$ for $K' \neq K$, we say that $f$ has some ramification.

\subsection*{The main results}
Given $K' \in \cV$, this work is devoted to a better understanding of the action of $\cH_{K'}$ on $\cA^{K'}$. For instance,  investigate the 
$\cH_{K'}$-eigenspace given by the set of $f \in \mathcal{A}^{K'}$, such that there exists an eigencharacter $\lambda_f$ satisfying 
$\Phi f = \lambda_f(\Phi) f$ for all $\Phi \in \cH_{K'}$. In this case, we say $f$ is an $\mathcal{H}_{K'}$-eigenform with eigencharacter $\lambda_f$. Note that $\lambda_f$ defines a morphism of $\C$-algebras from $\mathcal{H}_{K'}$ to $\C$. Hence, $\lambda_f$ indeed defines an additive character on $\mathcal{H}_{K'}.$ 

Given $\Phi \in \cH_{K'}$ and $\lambda \in \C$, we shall be interested in getting information about the dimension of the Hecke $\Phi$-eigenspace 
\[ \cA^{K'}(\Phi, \lambda):= \big\{f \in \cA^{K'} \;\big|\; \Phi(f)=\lambda f \big\}.\] 
If $f \in \cA^{K'}(\Phi, \lambda)$, we say that $f$ is an $\Phi$-eigenform. Observe that if $f \in \mathcal{A}^{K'}$ is an $\mathcal{H}_{K'}$-eigenform with eigencharacter $\lambda_f$, then $f$ is an $\Phi$-eigenform, for every $\Phi \in \cH_{K'}$ and $\lambda_f(\Phi)=\lambda$.

Our first main result, Theorem \ref{thm-mainunramified}, concerns about the unramified space of Hecke eigenforms and can be stated as follows. 

\begin{thmA} \label{thmA} Let $y \in \P^1$ be a closed point of degree $d$. Let $\Phi_y$ stand for the unramified Hecke operator given by the characteristic function of $K\big(\begin{smallmatrix}
  \pi_y & 0\\
  0 & 1
\end{smallmatrix}\big)K$,
where $\pi_y$ is an uniformizer of $y$. Then $\dim \cA^{K}(\Phi_y, \lambda)=d$, for every $\lambda \in \C^{\times}.$
    \end{thmA}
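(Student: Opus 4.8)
The plan is to reduce the computation of $\cA^K(\Phi_y,\lambda)$ to a concrete combinatorial problem on the graph of the Hecke operator $\Phi_y$ acting on the set of vertices $\mathrm{Bun}_2 \mathbb{P}^1/\Pic$, i.e.\ on $G(F)\backslash G(\mathbb{A})/ZK$. First I would recall the description, going back to Grothendieck and Atiyah, of rank-$2$ vector bundles on $\mathbb{P}^1$: every such bundle splits as $\cO(a)\oplus\cO(b)$, so after quotienting by the Picard group (equivalently, after imposing trivial central character) the vertices are indexed by a single integer $m=a-b\geq 0$. Thus $\cA^K$ is identified with the space of functions on $\N_{\geq 0}$ of moderate growth. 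The operator $\Phi_y$ for $y$ of degree $d$ then acts by an explicit nearest-neighbour-type recursion: $\Phi_y$ sends the bundle of invariant $m$ to a $\Z_{\geq 0}$-linear combination of bundles of invariants near $m$, with coefficients counting the relevant sublattices (these are governed by $q_y=q^d$). I would write down this recursion explicitly; concretely one expects something like $\Phi_y f(m) = f(m+d) + (\text{lower-order terms in } f(m-d+1),\dots,f(m+d-1))$ for $m$ large, with a modified formula for small $m$ coming from the extra automorphisms of the unstable bundles $\cO\oplus\cO$, etc.

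Next, given $\lambda\in\C^\times$, the eigenvalue equation $\Phi_y f = \lambda f$ becomes a linear recurrence of ``width'' $2d$ on the sequence $(f(m))_{m\geq 0}$: for $m$ sufficiently large, $f(m+d)$ is determined by $f(m-d+1),\dots,f(m+d-1)$ and $\lambda$. A recurrence of this shape has a solution space of dimension at most $2d$ a priori; the boundary relations near $m=0$ (the finitely many modified equations) cut this down. I would show that these boundary conditions impose exactly $d$ independent linear constraints, so that the space of sequences satisfying \emph{all} the eigenvalue equations has dimension exactly $d$. The key sub-step is to analyze the characteristic polynomial of the recurrence and its roots: one expects the roots to come in reciprocal pairs (reflecting the functional equation / the symmetry $\cO(a)\oplus\cO(b)\leftrightarrow\cO(b)\oplus\cO(a)$ already quotiented out), with $d$ of them ``large'' and $d$ ``small''; the moderate-growth condition then automatically kills the solutions with a component along a root of absolute value $>1$ once $\lambda\neq 0$ is used to rule out resonances, leaving a $d$-dimensional space — and one must check this $d$-dimensional space is compatible with the boundary equations. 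Alternatively, and perhaps more cleanly, I would argue directly: the ``generic'' part of the recurrence lets one solve for $f(m)$ for all large $m$ from the $d$ values $f(m_0),\dots,f(m_0+d-1)$ at some threshold $m_0$ together with a finite set of initial values, and then the finitely many boundary equations determine all but $d$ of the free parameters.

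The main obstacle I anticipate is twofold. First, getting the Hecke recursion exactly right at the boundary: the bundles $\cO(a)\oplus\cO(a)$ and their neighbours have nontrivial automorphism groups, and the volumes/stabilizers enter the formula for $\Phi_y$, so the small-$m$ equations are genuinely different from the generic ones and must be computed carefully (this is where a wrong constant would change the final dimension). Second, showing the count of independent boundary constraints is \emph{exactly} $d$ and not more or fewer: this requires understanding precisely how many of the $2d$ generic degrees of freedom survive, which I expect to handle by exhibiting an explicit basis of eigenforms — e.g.\ built from truncated geometric-type sequences $m\mapsto \mu^{m}$ where $\mu$ runs over the $d$ admissible roots of the characteristic equation attached to $\lambda$ — and checking these $d$ functions are linearly independent, of moderate growth, and satisfy every eigenvalue equation including the boundary ones. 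The hypothesis $\lambda\in\C^\times$ should be used exactly to ensure the characteristic polynomial is honestly of degree $2d$ (no drop in degree) and has no repeated-root degeneracies that would merge or destroy members of this basis.
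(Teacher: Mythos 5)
Your overall strategy coincides with the paper's: identify $G(F)\backslash G(\mathbb{A})/ZK$ with $\{c_{nx}\}_{n\geq 0}$ (equivalently, with the invariants of split rank-two bundles on $\P^1$), describe the action of $\Phi_y$ as a recurrence on this set via its graph, and count the surviving degrees of freedom. However, there are two genuine gaps. First, the heart of the paper's proof is the precise combinatorial structure of the graph $\cG_{\Phi_y,K}$, which you flag as ``the main obstacle'' but do not establish: for $n\geq d$ the vertex $c_{nx}$ has \emph{exactly two} neighbours, $c_{(n-d)x}$ with multiplicity $q^d$ and $c_{(n+d)x}$ with multiplicity $1$ (so the generic equation is the three-term relation $\lambda f(c_{nx})=f(c_{(n+d)x})+q^d f(c_{(n-d)x})$, not a full window of width $2d$ as you suggest); each vertex of the nucleus $\mathfrak{N}_d=\{c_0,\dots,c_{(d-1)x}\}$ other than $c_0$ has exactly one neighbour outside $\mathfrak{N}_d$, namely $c_{(n+d)x}$ with multiplicity $1$; and $c_0$ has the single outside neighbour $c_{dx}$ with multiplicity $q+1$. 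Establishing this requires the explicit reduction of the matrices $\bigl(\begin{smallmatrix}\pi_x^{d-n} & b_0+\cdots+b_{d-1}\pi_x^{d-1}\\ & 1\end{smallmatrix}\bigr)$ to standard representatives (the paper's Lemma \ref{lemma2.1}), which is where all the work lies. Once this is in hand, the dimension count is immediate: the values on $\mathfrak{N}_d$ determine $f(c_{dx}),\dots,f(c_{(2d-1)x})$ via the equations at $c_0,\dots,c_{(d-1)x}$, and then the three-term relation determines everything by induction; conversely any assignment on $\mathfrak{N}_d$ extends. So the isomorphism is $\cA^K(\Phi_y,\lambda)\cong\bigoplus_{\mathfrak{N}_d}\C$.

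Second, your proposed mechanism for cutting the ``$2d$-dimensional'' generic solution space down to $d$ --- namely that moderate growth kills the components along roots of absolute value greater than $1$ --- is incorrect. The height of the vertex $c_{nx}$ is itself $q^n$, so moderate growth permits exponential growth in $n$; every solution of the recurrence (which grows at most like $\mu^{n/d}$ with $\mu$ a root of $T^2-\lambda T+q^d$) automatically satisfies it. The paper accordingly dismisses moderate growth in one line, and the entire reduction from the a priori degrees of freedom to exactly $d$ comes from the boundary equations at $c_0,\dots,c_{(d-1)x}$, i.e.\ from the structure of the nucleus. If you ran your root-counting argument as written, it would not deliver the dimension; you would have to fall back on your ``argue directly'' variant, which is essentially the paper's proof but still presupposes the unproven graph structure above.
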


In the sequence, we fix $x \in \P^1$ a closed point of degree one and consider $K' < K$ with ramification at $x$ given by
\[ K' := K_x'\times \prod_{y\neq x}K_y \quad \text{ such that } \quad K_x':=\big\{ k \in K_x \;|\; k \equiv \id~\textrm{mod}(\pi_x)\big\}.\]
If $y \in \P^1$ is any a closed point, let  $\Phi_y'$ stand for the ramified Hecke operator given by the characteristic function of $K' \big( \begin{smallmatrix}
    \pi_y & 0  \\
     0 & 1
\end{smallmatrix} \big) K'.$ 
Hence, we obtain the ramified versions of the Theorem \ref{thmA}.

\begin{thmA} \label{thmB} If $\lambda \in \C^{\times}$, then $\dim \cA^{K'}(\Phi_x', \lambda)=1.$ 
\end{thmA}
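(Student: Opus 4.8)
The plan is to translate the spectral problem for $\Phi_x'$ into a combinatorial problem about a graph of Hecke operators acting on functions on the ramified double coset $G(F)\setminus G(\A)/Z K'$, exactly as one does in the unramified case (Theorem A), and then solve the resulting recursion explicitly. First I would record the geometric dictionary: for $X=\P^1$ the double coset $G(F)\setminus G(\A)/Z K$ parametrizes rank-two vector bundles on $\P^1$ up to twist, i.e.\ by Grothendieck's theorem the classes $\cO\oplus\cO(n)$ for $n\ge 0$, so $\cA^K$ is the space of moderate-growth functions on $\N$. Passing from $K$ to $K'$ with level structure at the degree-one point $x$ replaces bundles by bundles with a chosen line in the fiber at $x$ (a $\P^1(\kappa(x))=\P^1(\Fq)$-worth of data over each bundle $\cO\oplus\cO(n)$ with $n\ge 1$, and a single extra point over $\cO\oplus\cO$ since there $\mathrm{Aut}$ acts transitively on lines). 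I would make this vertex set and the corresponding function space $\cA^{K'}$ completely explicit, being careful about the stabilizers $\mathrm{Aut}(\cE)$ and $Z$.

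Next I would compute the action of $\Phi_x'$ on this vertex set. Since $\Phi_x'$ is the Hecke operator at the ramification point itself, its combinatorics are governed by the local situation at $x$: the coset $K'\big(\begin{smallmatrix}\pi_x&0\\0&1\end{smallmatrix}\big)K'$ decomposes into right $K'$-cosets, and $\Phi_x'(f)$ at a bundle-with-line $(\cE,\ell)$ is a sum over the modifications of $\cE$ at $x$ compatible with (and interacting with) the line $\ell$. I expect the resulting graph to be essentially a directed path: each vertex $(\cO\oplus\cO(n),\ell)$ maps to a bounded number of neighbours at levels $n\pm 1$ (with the line $\ell$ being transported or rotated), so $\Phi_x'(f)=\lambda f$ becomes a second-order linear recursion in $n$ with the level-structure index as an auxiliary parameter. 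The moderate-growth condition then selects, as in the unramified case, the subspace of solutions that do not blow up, and a dimension count of that solution space gives the answer.

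The main obstacle I anticipate is the bookkeeping at the two ``boundary'' vertices: the base point $\cO\oplus\cO$ (where the automorphism group $\GL_2(\Fq)$ is large and acts transitively on lines, collapsing the fiber) and the vertices with small $n$ where $\mathrm{Aut}(\cO\oplus\cO(n))$ is non-abelian and may identify some of the modified bundles-with-line. These boundary identifications are exactly what cuts the naive solution space down; getting the incidence numbers (weighted by $|\mathrm{Aut}|$ ratios) right there is the delicate part. Concretely I would: (i) write the recursion and its general solution away from the boundary, a two-dimensional space spanned by $\lambda^{\pm n}$-type solutions times a periodic factor; (ii) impose moderate growth to kill the growing branch, leaving a one-dimensional family for each choice of the auxiliary level-structure parameter; (iii) impose the boundary relations at $n$ small and at $\cO\oplus\cO$, which are linear constraints coupling the different parameter-values, and check that they cut the total solution space down to dimension exactly $1$ when $\lambda\ne 0$; (iv) conclude $\dim\cA^{K'}(\Phi_x',\lambda)=1$. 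One should also verify non-triviality, i.e.\ that the surviving solution really is $\cH$-finite and hence a genuine automorphic form, which follows because it is supported (after the growth condition) in a finitely-generated way over the Hecke action, as in the proof of Theorem A.
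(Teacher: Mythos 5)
Your general framework --- describe the vertex set $G(F)\setminus G(\mathbb A)/ZK'$, compute the graph of $\Phi_x'$ on it, and solve the resulting eigenvalue equation --- is exactly the paper's approach, and your picture of the vertices (a copy of $\P^1(\Fq)$ over each $c_{nx}$ with $n\geq 1$, a single class over $c_0$) is correct. However, the combinatorics you \emph{anticipate} for $\cG_{\Phi_x',K'}$, and hence the mechanism by which you propose to arrive at dimension $1$, is not what actually happens, so steps (i)--(iii) as written would fail. The computation you skip is the coset decomposition at the ramified place itself, $K'\bigl(\begin{smallmatrix}\pi_x& \\ &1\end{smallmatrix}\bigr)^{-1}K'=\coprod_{a\in\Fq}\bigl(\begin{smallmatrix}\pi_x&a\pi_x\\ &1\end{smallmatrix}\bigr)K'$ --- note: $q$ cosets, not $q+1$. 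Carrying this through vertex by vertex, the graph turns out to be \emph{directed and of first order}: all $q$ neighbours (with multiplicity) of $c_{nx,[1:b]}'$ coincide with the single vertex $c_{(n-1)x,[1:0]}'$, all neighbours of $c_{nx,[0:1]}'$ lie at level $n+1$, and $c_0'$ maps to $c_{x,[1:0]}'$ with multiplicity $q$. Consequently $\Phi_x'f=\lambda f$ is not a second-order recursion with a two-dimensional local solution space: it expresses each value $f(c_{nx,w}')$ directly as $(q/\lambda)^{k}f(c_0')$ for an explicit $k$ depending on $(n,w)$, so the solution space is already one-dimensional \emph{before} any growth condition is imposed.

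Moreover, your step (ii) --- ``impose moderate growth to kill the growing branch'' --- could not do the work you assign to it even if the recursion were second order. Since $\norm{c_{nx}}=q^{n}$, the moderate growth bound $|f(g)|\leq C\norm{g}^{N}$ tolerates arbitrary exponential growth in $n$ once $N$ is large, and (exactly as in the unramified Theorem~\ref{thm-mainunramified} of the paper) it eliminates nothing; in the actual proof it is only verified \emph{a posteriori} for the unique candidate eigenfunction, which is why the answer is $1$ for \emph{every} $\lambda\in\C^{\times}$ rather than for special $\lambda$. Likewise there are no residual boundary constraints to impose in your step (iii): consistency at $c_0'$ is automatic. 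So the genuinely missing ingredient is the $q$-term double-coset decomposition at $x$ and the resulting directed graph; with that in hand the dimension count is immediate, and the branch-killing scheme you describe is both unnecessary and, as a mechanism, incorrect.
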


\begin{thmA} \label{thmC} Let $y \in \P^1$ be  a closed point, $y\neq x$. If $\lambda \in \C^{\times}$, then $\dim \cA^{K'}(\Phi_y', \lambda)=d(q+1).$
\end{thmA}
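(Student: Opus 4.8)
The plan is to leverage the explicit combinatorial description of the action of the ramified Hecke operators on $\cA^{K'}$ developed earlier in the paper — in particular the graph of $\Phi_y'$ acting on the set of $K'$-orbits in $\Bun_2^{\P^1}$, or equivalently on $G(F)\backslash G(\A)/ZK'$ — together with the structural input from Theorems~\ref{thmA} and~\ref{thmB}. First I would recall that, since $\P^1$ has genus $0$, the double coset space $G(F)\backslash G(\A)/ZK$ is completely understood: by the Birkhoff–Grothendieck decomposition, every rank-$2$ bundle on $\P^1$ splits as a sum of line bundles, so the unramified points are indexed by a single integer invariant (the ``gap'' between the two summands), giving an explicit half-line of vertices. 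The ramified space $\cA^{K'}$ fibers over $\cA^K$ with fibers describing the extra data at $x$ — concretely, $G(F)\backslash G(\A)/ZK'$ surjects onto $G(F)\backslash G(\A)/ZK$ with fiber governed by $K_x/K_x' \cong \GL_2(\Fq)$ modulo the stabilizer; I would make this fibration precise, identifying each $K$-vertex's preimage with a $\P^1(\Fq)$-torsor (of size $q+1$) possibly with some collapsing near the ``boundary'' vertex where the automorphism group of the bundle is larger.

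Next I would write down the action of $\Phi_y'$ for $y \neq x$ in these coordinates. Away from $x$, the Hecke modification at $y$ does not interact with the level structure at $x$, so the graph of $\Phi_y'$ is, roughly, the product (or fibered product) of the graph of the unramified operator $\Phi_y$ from Theorem~\ref{thmA} with the trivial action on the $\P^1(\Fq)$-fiber at $x$. Thus $\Phi_y'$ should act as $\Phi_y \otimes \mathrm{id}$ on $\cA^{K'} \cong \cA^K \otimes (\text{fiber functions})$, up to the correction coming from vertices with extra automorphisms. Granting this, the $\lambda$-eigenspace of $\Phi_y'$ is the tensor product of the $\lambda$-eigenspace of $\Phi_y$ on $\cA^K$ — which has dimension $d$ by Theorem~\ref{thmA} — with the full space of functions on the fiber, which has dimension $q+1$. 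This yields $\dim \cA^{K'}(\Phi_y',\lambda) = d(q+1)$.

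The main obstacle will be the bookkeeping at the special vertex (or vertices) of the graph where the bundle has nontrivial automorphisms not killed by the $K'$-level structure — there the fiber $\P^1(\Fq)$ gets partially collapsed, so the naive tensor-product description fails and one must check that the eigenvalue equation still has exactly the expected solution count there. Concretely, I would set up the recursion for an eigenform along the half-line of unramified vertices (as in the proof of Theorem~\ref{thmA}), lift it to the ramified graph, and verify that the boundary condition imposed at the special vertex cuts down the dimension by exactly the right amount so that the total stays $d(q+1)$ rather than something larger; the case $\lambda \in \C^\times$ (excluding $\lambda = 0$) is what guarantees the recursion is non-degenerate and the boundary vertex contributes no extra kernel. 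A secondary point to handle carefully is the dependence on $d = \deg y$: the factor $d$ enters exactly as in Theorem~\ref{thmA}, via the $d$ distinct ``eigen-directions'' coming from the $d$-th roots associated to a degree-$d$ place, and I would want to confirm that introducing ramification at the degree-one point $x$ does not merge any of these.
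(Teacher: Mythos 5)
Your plan follows essentially the same route as the paper: Theorem \ref{thm-ramifiedvertices} realizes the ramified vertices as a $\P^1(\Fq)$-fibration over the unramified half-line that collapses over $c_0$, Proposition \ref{prop-3.16} confirms that the branches of $\cG_{\Phi_y',K'}$ preserve the fiber coordinate $w$ away from $c_0'$ while the edges at $c_0'$ mix the fiber over $c_{dx}$ (multiplicities $q$ and $1$), and the dimension is obtained exactly by the recursion-plus-boundary-condition argument you describe, with the nucleus $\mathfrak{N}_d' = \cN_d' - \{c_{dx,[1:0]}'\}$ of size $d(q+1)$ supplying the free parameters. The literal identification $\cA^{K'} \cong \cA^K \otimes \C^{\P^1(\Fq)}$ fails (as you anticipate) because of the collapsed fiber over $c_0$, but your fallback --- propagating the eigenvalue recursion along the $d(q+1)$ branches and using the relation at $c_0'$ to solve for one nucleus value --- is precisely the paper's proof of Theorem \ref{thm-mainramified2}.
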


These are Theorem \ref{thm-mainramified1} and Theorem  \ref{thm-mainramified2}, respectively. Moreover, as a corollary of Theorem \ref{thmB}, we have the following. 

\begin{thmA} \label{thmD} There are no nontrivial ramified (with respect to $K'$)  cuspidal automorphic forms  for the projective line.
\end{thmA}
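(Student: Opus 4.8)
The plan is to deduce Theorem~\ref{thmD} directly from Theorem~\ref{thmB}, using the standard fact that the space of cuspidal automorphic forms (with fixed level, here $K'$, and trivial central character) is finite-dimensional. First I would recall that a cuspidal automorphic form $f$ lies in $\cA^{K'}$ and is annihilated by the constant-term operator along the unipotent radical of the standard Borel; in particular the space $\cA_{\mathrm{cusp}}^{K'}$ of such forms is a finite-dimensional subspace of $\cA^{K'}$, and it is stable under the commutative algebra $\cH_{K'}$. Since $\cA_{\mathrm{cusp}}^{K'}$ is finite-dimensional and $\cH_{K'}$ acts on it by commuting operators, if it were nonzero it would contain a simultaneous eigenvector, hence an $\cH_{K'}$-eigenform $f$, with some eigencharacter $\lambda_f$; in particular $f$ would be a $\Phi_x'$-eigenform with eigenvalue $\lambda_f(\Phi_x')$.

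Next I would pin down the eigenvalue. The operator $\Phi_x'$ is (a multiple of) the characteristic function of $K'\bigl(\begin{smallmatrix}\pi_x&0\\0&1\end{smallmatrix}\bigr)K'$; because $\det\bigl(\begin{smallmatrix}\pi_x&0\\0&1\end{smallmatrix}\bigr)=\pi_x$ has nontrivial valuation at $x$ while the central character is trivial, the action of $\Phi_x'$ on any automorphic form cannot be by the scalar $0$ on a nonzero vector unless the whole eigenvalue vanishes — more precisely, one checks (from the explicit description of the $\Phi_x'$-action developed earlier in the paper, which underlies Theorem~\ref{thmB}) that a cuspidal, hence rapidly decreasing, $\Phi_x'$-eigenform must have eigenvalue $\lambda \in \C^\times$: a zero eigenvalue would force the form to be supported on a locus incompatible with cuspidality, or would contradict the invertibility of the relevant transition on the graph of $\Phi_x'$. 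Granting $\lambda\in\C^\times$, Theorem~\ref{thmB} tells us $\dim\cA^{K'}(\Phi_x',\lambda)=1$, so the $\Phi_x'$-eigenspace is spanned by a single explicit automorphic form $f_0$.

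Then I would show $f_0$ is \emph{not} cuspidal, which yields the contradiction: the unique (up to scalar) $\Phi_x'$-eigenform with nonzero eigenvalue is the one coming from (a twist/restriction of) an Eisenstein-type construction on $\P^1$ — on the projective line the space $\cA^{K'}$ is governed by the very simple geometry of $\Bun_2(\P^1)$, and the eigenform realizing $\dim=1$ is visibly an Eisenstein series (equivalently, its constant term along the Borel is nonzero, as can be read off from the graph of $\Phi_x'$). Hence $\cA_{\mathrm{cusp}}^{K'}\cap\cA^{K'}(\Phi_x',\lambda)=0$ for every $\lambda\in\C^\times$, and combined with the eigenvalue analysis above this forces $\cA_{\mathrm{cusp}}^{K'}=0$.

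I expect the main obstacle to be the eigenvalue step: rigorously excluding $\lambda=0$ for a hypothetical cuspidal $\Phi_x'$-eigenform, since Theorem~\ref{thmB} as stated only controls the $\lambda\in\C^\times$ case. The cleanest route is probably to invoke the explicit matrix/graph description of $\Phi_x'$ acting on $\cA^{K'}$ (the combinatorial model on $G(F)\backslash G(\A)/ZK'$ set up before Theorem~\ref{thmB}) and observe that $\Phi_x'$ acts there with no kernel on the subspace of rapidly-decreasing functions, or alternatively to note that a cuspidal eigenform generates a cuspidal automorphic representation whose local component at $x$ is an infinite-dimensional (hence generic) representation on which the relevant Hecke operator acts invertibly. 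Either way, once $\lambda\in\C^\times$ is secured, the argument is immediate from Theorem~\ref{thmB} and the non-cuspidality of the one-dimensional eigenspace.
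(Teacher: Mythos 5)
Your overall strategy matches the paper's at the top level: reduce to a cuspidal $\Phi_x'$-eigenform and then exploit $\dim\cA^{K'}(\Phi_x',\lambda)=1$. But the two proofs diverge in the key inputs. The paper reduces to eigenforms via semisimplicity of the cuspidal spectrum and Multiplicity One (rather than your finite-dimensionality-plus-commuting-operators argument --- note that $\cH_{K'}$ for a principal congruence subgroup is \emph{not} commutative, though this is harmless since a single operator on a finite-dimensional invariant subspace always has an eigenvector), and it then disposes of the eigenform case by invoking Li's decomposition theorem \cite[Thm.\ 3.2.2]{li-79}, which gives $\cA^{K'}(\Phi_x',\lambda)=\cE^{K'}(\Phi_x',\lambda)\oplus\cR^{K'}(\Phi_x',\lambda)\oplus\cA_0^{K'}(\Phi_x',\lambda)$ with $\dim\cE^{K'}(\Phi_x',\lambda)\geq 1$; combined with Theorem~\ref{thmB} this leaves no room for a cuspidal summand. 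Your alternative --- exhibiting the unique eigenform and checking its constant term is nonzero --- is plausible but is not actually ``visible'' from the graph: the explicit values $f(c_{nx,w}')$ on the double coset do not immediately yield the integral over $N(F)\setminus N(\A)$ without further unwinding of how unipotent translates distribute among the classes. The citation of Li's theorem is doing real work here that your sketch replaces with an assertion.

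The genuine gap is the one you flag yourself: the eigenvalue $\lambda=0$. Theorem~\ref{thmB} (and the paper's Corollary~\ref{cor-cuspramifiedforms}) only control $\lambda\in\C^{\times}$, and your first proposed fix is false as stated: $\Phi_x'$ does \emph{not} act without kernel on functions on $G(F)\setminus G(\A)/ZK'$. From the graph $\cG_{\Phi_x',K'}$ one reads off $\Phi_x'(f)(c_{nx,[1:a]}')=q\,f(c_{(n-1)x,[1:0]}')$ and $\Phi_x'(f)(c_{nx,[0:1]}')=\sum_{a\in\FF_q}f(c_{(n+1)x,[1:a]}')$, so any finitely supported function vanishing on the vertices $c_{nx,[1:0]}'$ and with $\sum_{a}f(c_{nx,[1:a]}')=0$ for each $n$ lies in $\ker\Phi_x'$; such functions are smooth, $K'$-invariant and of moderate growth, so the kernel of $\Phi_x'$ on $\cA^{K'}$ is large. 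To close the argument you would have to show that no nonzero element of this kernel is cuspidal, which is not a formal consequence of anything you have set up; your second suggestion (genericity of the local component at $x$ of a cuspidal representation forcing invertibility of $\Phi_x'$ on the $K_x'$-fixed vectors) would need a proof that the relevant element is invertible in the local Hecke algebra of the principal congruence subgroup, which is again not supplied. Until the case $\lambda=0$ is excluded or handled directly, the proposal does not yet establish Theorem~\ref{thmD}.
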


This is Corollary \ref{cor-trivicuspramifiedforms}. We refer to Definition \ref{def-cusp} for the precise definition of cusp (automorphic) forms.

The above $K'$, as well as Theorems \ref{thmB}, \ref{thmC} and \ref{thmD} are considered in section \ref{sec-ramified}. 
We end the article by considering the generalization of the ramification given by $K'$ in section \ref{sec-generalramification}. Regarding this new ramification, in addition to some technical results, we provide a theoretical description of the action of ramified Hecke operators on the space of ramified automorphic forms. This is stated in Theorem \ref{thm-generalramifiedgraphs}. Moreover, we announce a conjecture for the dimension of the space of ramified  Hecke eigenforms.


\section{Graphs of Hecke operators}
The graphs of Hecke operators play a central role in this work.  We shall explicitly describe  the action of $\mathcal{H}_{K'}$ on $\cA^{K'}$ for both unramified and ramified cases with these graphs. This allows us to obtain a complete description of the Hecke eigenspaces and, therefore, prove our main results. Hence, we dedicated this section to a careful introduction of this object. 

The following proposition is an important result that leads to the definition of the graphs of Hecke operators. 
 
\begin{prop}{\cite[Prop. 1.3]{oliver-graphs}} \label{propfund}  Let $K' \in \mathcal{V}$ and fix $\Phi \in \mathcal{H}_{K'}$. For all  classes of adelic matrices $[g] \in G(F)Z \setminus G(\mathbb{A}) / K',$ there is a unique set of pairwise distinct classes $[g_1], \ldots,[g_r] \in G(F) Z\setminus G(\mathbb{A}) / K'$ and numbers $m_1,  \ldots, m_r \in \C^{*}$ such that, for all $f \in \mathcal{A}^{K'}$, we have  
$$\Phi(f)(g) = \sum_{i=1}^{r} m_i f(g_i).$$
\end{prop}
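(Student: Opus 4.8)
The plan is to compute $\Phi(f)(g)$ directly from the definition $\Phi(f)(g)=\int_{G(\mathbb{A})}\Phi(h)f(gh)\,dh$, turn this integral into a finite sum of values of $f$ at adelic points, reorganise those points into pairwise distinct double cosets to get existence, and then derive uniqueness from the linear independence of evaluation functionals on $\mathcal{A}^{K'}$. First I would reduce to a single double coset: since $\Phi\in\mathcal{H}_{K'}$ is smooth, compactly supported and bi-$K'$-invariant, and $G(\mathbb{A})$ is the disjoint union of the open double cosets $K'hK'$ while $\supp\Phi$ is compact, we may write $\Phi=\sum_{j=1}^{N}c_j\,\mathrm{char}_{K'h_jK'}$ with $N<\infty$ and $c_j\in\C$; because $\Phi\mapsto\Phi(f)(g)$ is linear it suffices to treat $\Phi=\mathrm{char}_{K'hK'}$ and recombine at the end.

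For such a $\Phi$ I would use that $K'$ is compact open and $K'\cap hK'h^{-1}$ open in it, so $n:=[K':K'\cap hK'h^{-1}]<\infty$ and $K'hK'=\bigsqcup_{i=1}^{n}\gamma_iK'$ for suitable $\gamma_i$, each left coset of volume $\mathrm{vol}(K')$. Then, using that $f$ is right $K'$-invariant,
\[
\Phi(f)(g)=\int_{K'hK'}f(gt)\,dt=\sum_{i=1}^{n}\int_{\gamma_iK'}f(gt)\,dt=\mathrm{vol}(K')\sum_{i=1}^{n}f(g\gamma_i).
\]
Since $f$ is also left $G(F)$-invariant with trivial central character, $f(g\gamma_i)$ depends only on the class $[g\gamma_i]\in G(F)Z\backslash G(\mathbb{A})/K'$. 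Grouping the $\gamma_i$ — for a general $\Phi$, the $\gamma_{j,i}$ coming from all the double cosets $K'h_jK'=\bigsqcup_i\gamma_{j,i}K'$ — according to the value of $[g\gamma_i]$, calling $[g_1],\dots,[g_r]$ the distinct classes that occur and $m_k:=\mathrm{vol}(K')\sum_j c_j\,\#\{\,i:[g\gamma_{j,i}]=[g_k]\,\}$, and then discarding the indices with $m_k=0$, I obtain $\Phi(f)(g)=\sum_k m_k f(g_k)$ with the $[g_k]$ pairwise distinct and $m_k\in\C^{*}$. This settles existence (for $\Phi=\mathrm{char}_{K'hK'}$ every $m_k$ is a positive real, so nothing is discarded; if all $m_k$ vanish the set is empty and $\Phi(f)(g)=0$).

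For uniqueness, suppose two representations $\sum_{i=1}^{r}m_if(g_i)$ and $\sum_{k=1}^{s}n_kf(g'_k)$ agree on all of $\mathcal{A}^{K'}$, with the $[g_i]$ pairwise distinct, the $[g'_k]$ pairwise distinct, and all coefficients nonzero. Regrouping their difference yields a relation $\sum_{l=1}^{t}a_l f(h_l)=0$, valid for every $f\in\mathcal{A}^{K'}$, with the $[h_l]$ pairwise distinct; I must show it is the trivial relation. Equivalently, for any finite family of pairwise distinct classes the associated evaluation functionals $f\mapsto f(h_l)$ on $\mathcal{A}^{K'}$ are linearly independent, i.e.\ the map $\mathcal{A}^{K'}\to\C^{t}$, $f\mapsto(f(h_l))_l$, is surjective; and for this it would be enough to exhibit, for each $l$, an automorphic form that is nonzero at $[h_l]$ and vanishes at the remaining $[h_{l'}]$.

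This separation statement is where I expect the real difficulty to be. The naive candidates — indicator functions of single classes in $G(F)Z\backslash G(\mathbb{A})/K'$ — are bounded (hence of moderate growth) and $K$-finite, but they are \emph{not} $\mathcal{H}$-finite: for $\mathbb{P}^1$ the classes form an infinite combinatorial graph and iterating a Hecke operator spreads the support without bound, so these functions do not lie in $\mathcal{A}^{K'}$, and one must separate classes using genuine automorphic forms. The plan there would be to use Eisenstein series (unramified, or tamely ramified in the ramified setting), whose values along $G(F)Z\backslash G(\mathbb{A})/K'$ are given by explicit closed formulas; by varying the spectral parameter one can set up a Vandermonde-type nondegenerate linear system on any prescribed finite set of classes, which gives the surjectivity above. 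Proving that $\mathcal{A}^{K'}$ is this rich — enough to realise arbitrary values on finite sets of double-coset classes — is the crux; the earlier reductions and the combinatorial bookkeeping in the existence part are routine.
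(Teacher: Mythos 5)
Your existence argument is correct and is the standard one: since $\Phi$ is locally constant, compactly supported and bi-$K'$-invariant, it is a finite linear combination of functions $\mathrm{char}_{K'hK'}$; each $K'hK'$ is a finite disjoint union of left cosets $\gamma_iK'$, so $\Phi(f)(g)$ collapses to $\mathrm{vol}(K')\sum_{j,i} c_j f(g\gamma_{j,i})$, and grouping the $\gamma_{j,i}$ by the class of $g\gamma_{j,i}$ in $G(F)Z\setminus G(\mathbb{A})/K'$ gives the asserted expression. (Note the paper offers no proof of its own here; the proposition is quoted from \cite[Prop.~1.3]{oliver-graphs}.)

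The genuine gap is exactly where you say it is, and it is not closed. Uniqueness is equivalent to the linear independence, on $\mathcal{A}^{K'}$, of the evaluation functionals at finitely many pairwise distinct classes. In the cited source this is immediate, because there the space of automorphic forms is defined by smoothness and $K$-finiteness only, so the indicator function of the single open set $G(F)Z h_l K'$ lies in $\mathcal{A}^{K'}$ and separates $[h_l]$ from the remaining classes. With the definition of $\mathcal{A}$ adopted in this paper (moderate growth and $\mathcal{H}$-finiteness added), those indicator functions are excluded --- you correctly observe they are not $\mathcal{H}$-finite --- and the separation statement becomes a nontrivial assertion about the richness of $\mathcal{A}^{K'}$, which your proposal only announces as a plan (``Vandermonde-type system'' of Eisenstein series values). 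As written, the proof is therefore incomplete at its only hard point. The Eisenstein-series route is viable: for $\mathbb{P}^1$ and $K'=K$ the values on the classes $c_{nx}$ have the shape $a(s)q^{ns}+a(-s)q^{-ns}$, and for distinct $n$ these are linearly independent as functions of $s$, which yields an invertible evaluation matrix; the ramified case requires Eisenstein series with level at $x$ as in \cite{li-79}. But this must actually be carried out. A cleaner alternative is to prove the proposition for the larger space of smooth, $K$-finite, left $G(F)Z$-invariant functions (where uniqueness is trivial for the reason above) and then observe that the classes and multiplicities so obtained are canonical --- determined by the left-coset decomposition of $\mathrm{supp}\,\Phi$ and independent of $f$ --- which is all that the construction of the graphs in the rest of the paper actually uses.
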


Above proposition allow us to define the graph of a Hecke operator as follows.

\begin{df} \label{defgraphs} For $[g],[g_1], \ldots,[g_r] \in G(F) \setminus G(\mathbb{A}) /Z K'$ as in the Proposition \ref{propfund}, we denote  
$\mathcal{V}_{\Phi,K'}([g]) := \{([g],[g_i],m_i)\}_{i=1, \ldots, r}$. Thus, we define the graph $\mathcal{G}_{\Phi,K'}$ of $\Phi$ relative to $K'$ whose vertices are 
\[\mathrm{Vert}\; \mathcal{G}_{\Phi,K'} = G(F) \setminus G(\mathbb{A})/Z K'\]
and the oriented weighted edges 
\[\mathrm{Edge} \; \mathcal{G}_{\Phi,K'} = \bigcup_{[g] \in \mathrm{Vert} \mathcal{G}_{\Phi,K'}} \mathcal{V}_{\Phi,K'}([g]).\]
The classes $[g_i]$ are called the $\Phi$-neighbors of $[g]$ (relative to $K'$). Moreover, 
for the triples $([g],[g_i],m_i)$, we call the first entry $[g]$ by  origin, the second entry $[g_i]$ by terminus and the third $m_i$ by the multiplicity of the edge. 

\end{df}

These graphs are introduced by Lorscheid in \cite{oliver-graphs} for $\GL_2$ and generalized  in \cite{alvarenga19} to $\GL_n$. In \cite{oliver-elliptic} and \cite{alvarenga20} these graphs are described when $X$ is an elliptic curve. Moreover, in \cite{oliver-toroidal} Lorscheid applies this theory to probe the space of toroidal automorphic forms. 

\begin{rem}
For better readability, even though a vertex in the graph of a Hecke operator is a class $[g]$ in  $G(F) \setminus G(\mathbb{A}) /Z K'$, we shall denote it just by $g$. Moreover, we will denote $m_{\Phi, K'}(g,h)$ the multiplicity of the edge between $g$ and $h$ in $\mathcal{G}_{\Phi,K'}$. Observe that $m_{\Phi, K'}(g,h)= 0$ if $g$ and $h$ are not $\Phi$-neighbors.
\end{rem}

By Proposition \ref{propfund} and the definition of the graph of $\Phi \in \cH_{K'}$, we have for $f \in \mathcal{A}^{K'}$ and $[g] \in G(F) \setminus G(\mathbb{A}) /Z K'$ that
$$\Phi(f)(g) = \displaystyle\sum_{\substack{(g,g_i,m_i)\\ \in \mathrm{Edge}\mathcal{G}_{\Phi,K'}}} m_i f(g_i) = 
\displaystyle\sum_{h \in \mathrm{Vert}\; \mathcal{G}_{\Phi,K'}} m_{\Phi,K'}(g,h) f(h).$$

Since $\mathcal{H} = \bigcup \mathcal{H}_{K'}$, with $K'$ running over all compact opens in $G(\mathbb{A})$, the notion of the graph of a Hecke operator applies to any $\Phi \in \mathcal{H}.$
By construction, the set of vertices of the graph of a Hecke operator $\Phi \in \mathcal{H}_{K'}$  only depends on $K'$, whereas the edges depend on the particular choice of $\Phi.$ There are at most one edge for each two vertices, and the weight of an edge is always non-zero. Each vertex is connected to only a finite number of other vertices.


\section{Unramified Hecke eigenspaces}
\label{sec-unramified}

Through the article, empty entries in the matrices represents  zero entries. Moreover, for $x \in |X|$, we consider $ g \in G_x$ as a matrix in $G(\A)$ by completing the adele with the identity matrix in the places different from $x$. If $g \in G_x \cap G_y$, for $x,y \in |X|$, to avoid ambiguity we write $(g)_x \in G(\mathbb{A})$ for the adelic matrix given by completing the adele with the identity matrix in the places different of $x$. Analogously, $(g)_y$ is adelic matrix given by completing the adele with the identity matrix in the places that are different from $y$.

\subsection*{The unramified Hecke algebra} In this section, we will be interested in the unramified Hecke algebra $\cH_K$ and its action on 
$\cA^K$. For $x \in |X|$, let $\Phi_x$ be the characteristic function of 
\[K\begin{pmatrix}
    \pi_x &  \\
     & 1
\end{pmatrix} K\] 
and $\Phi_{x,0}$ be the characteristic function of 
\[K\begin{pmatrix}
    \pi_x &  \\
     & \pi_x
\end{pmatrix} K = 
\begin{pmatrix}
    \pi_x &  \\
     & \pi_x
\end{pmatrix} K. \]
 Both $\Phi_x$ and $\Phi_{x,0}$ are elements of $\cH_K$. A theorem of Satake, cf.\ \cite[Thm. 4.6.1 and Prop. 4.6.1]{bump} or \cite{satake63}, state that these Hecke operators generates the unramified Hecke algebra. Namely, identifying the characteristic function on $K$ with $1 \in \C$ yields $\cH_K \cong \C[\Phi_x, \Phi_{x,0}, \Phi_{x,0}^{-1}]_{x \in |X|}$. The action of $\cH_K$ in $\cA^K$ is completely determined by the action of $\Phi_x$ for $x \in |X|$, cf.\ \cite[Sec. 1]{oliver-graphs}. This justifies our reduction in the following.

\subsection*{The projective line case} From now on, we consider the case where $X = \mathbb P^1$, i.e.\ $F = \Fq(t)$, and $G = \GL_2$. Moreover, we fix $x \in |\P^1|$ a place of degree~$1$ such that $t=\pi_x$. 
The main result of this section is the calculus of dimension of $\cA^{K}(\Phi_y, \lambda)$, the eigenspaces of $\Phi_y \in \cH_K$, for every $y \in |\P^1|$ and $\lambda \in \C^{\times}$. We observe that for $|y|=1$ this dimension is $1$ and was already calculated in \cite{oliver-thesis}.

For every $n \in \Z$ and for every place $y$ in $\P^1$, we define 
 \[ p_{ny}:=\begin{pmatrix} 
         \pi_y^{-n}& \\ &1 
   \end{pmatrix} \in G_y := G(F_y).\]
   We will denote $p_y$ for $p_{1y}$ and $p_0$ for $p_{0y}$.

\begin{prop}{\cite[App. A]{oliver-graphs}} Let $x \in |\P^1|$ be the fixed place of degree~$1$. Then, the set 
$\big\{p_{nx} \;|\; n\in \N\big\}$ is a system of representatives for the double coset $G(F)\setminus G(\mathbb{A})/Z K$. Hence, denoting $c_{nx}$ the class of $p_{nx}$ in $G(F) \setminus G(\mathbb{A})/Z K,$ yields 
 \[\mathrm{Vert}\; \mathcal{G}_{\Phi,K} = \{c_{nx}\}_{n \geq0}\]
 for every $\Phi \in \cH_K.$
$\hfill \square$    
\end{prop}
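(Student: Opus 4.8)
The claim is that $\{p_{nx} \mid n \in \N\}$ is a system of representatives for $G(F)\backslash G(\A)/ZK$, where $X = \P^1$. The strategy is to translate the double-coset problem into the geometry of vector bundles on $\P^1$, invoke Grothendieck's splitting theorem, and then account for the twist by the center $Z$ and the projectivization.

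First I would recall the classical dictionary (Weil's theorem): the double coset $G(F)\backslash G(\A)/K$ is in natural bijection with the set of isomorphism classes of rank-$2$ vector bundles on $\P^1$, i.e.\ with $\Bun_2(\P^1)$. Concretely, an adelic matrix $g=(g_y)_y$ determines a locally free sheaf obtained by gluing the trivial bundle on each formal disc via the $g_y$; changing $g$ by $G(F)$ on the left and $K$ on the right does not change the isomorphism class, and every class arises this way. Then I would apply Grothendieck's theorem: every rank-$2$ bundle on $\P^1$ is isomorphic to $\cO(a)\oplus\cO(b)$ for a unique unordered pair $\{a,b\}$ with $a,b\in\Z$. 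Next I would quotient by the center $Z\cong\A^\times$ acting on the right. Since $\Pic(\P^1)\cong\Z$ via degree, tensoring by a line bundle $\cO(m)$ sends $\cO(a)\oplus\cO(b)$ to $\cO(a+m)\oplus\cO(b+m)$; this is exactly the action of $Z$ up to the subtlety that $Z$-action shifts by multiples of the degree of a fixed point, and since we fixed $x$ of degree $1$ the idele $p_{mx}$ realizes the twist by $\cO(m)$ (or $\cO(-m)$). Hence $G(F)\backslash G(\A)/ZK$ is identified with unordered pairs $\{a,b\}$ modulo simultaneous shift $\{a,b\}\mapsto\{a+m,b+m\}$, which is parametrized by the single invariant $|a-b|\in\N$. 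Setting $n=|a-b|$, the bundle $\cO(0)\oplus\cO(-n)$ — whose adelic representative is precisely $p_{nx}=\big(\begin{smallmatrix}\pi_x^{-n}&\\&1\end{smallmatrix}\big)_x$ — gives one representative in each class, and these are pairwise non-equivalent because $n$ is a genuine isomorphism invariant. This yields $\Vert\,\cG_{\Phi,K} = \{c_{nx}\}_{n\ge 0}$, as the vertex set only depends on $K$ (and $Z$), not on $\Phi$.

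The step requiring the most care is keeping the bookkeeping of the $Z$-twist consistent: one must check that the right action of the idele whose $x$-component is $\pi_x$ (and which is $1$ elsewhere) corresponds to tensoring by $\cO(\pm x)=\cO(\pm 1)$ rather than some other line bundle, and that on $\P^1$ the map $Z\to\Pic(\P^1)$ is surjective (this is where $g=0$, i.e.\ $\P^1$, genuinely matters — on a higher-genus curve there would be a $2g$-dimensional abelian-variety worth of extra classes, and the representatives would not be so simple). It also must be verified that $G(F)$ on the left together with $K$ on the right realizes the \emph{full} isomorphism relation among bundles — this is the content of Weil's adelic description and can be cited from \cite[App. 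A]{oliver-graphs}. Since the proposition is explicitly attributed to \cite[App. A]{oliver-graphs}, I would structure the write-up as a short recollection of that argument rather than a from-scratch proof, emphasizing the Grothendieck splitting and the degree-$1$ point $x$ giving a clean section of $Z\to\Pic$.
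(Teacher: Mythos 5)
Your argument is correct, and it is essentially the proof one finds in the cited source: Weil's dictionary identifies $G(F)\setminus G(\A)/K$ with isomorphism classes of rank-$2$ bundles on $\P^1$, Grothendieck--Birkhoff splits every such bundle as $\cO(a)\oplus\cO(b)$, and dividing by $Z$ (which surjects onto $\Pic(\P^1)\cong\Z$ precisely because there is a degree-one place) leaves the single invariant $n=|a-b|$, realized adelically by $p_{nx}$. Note, however, that the present paper does not reprove the statement at all -- it is quoted with a reference -- and that the machinery the authors actually run with afterwards is different from yours: they pass through strong approximation to the single-place quotient $\Gamma_x\setminus G_x/Z_xK_x$ with $\Gamma_x=\GL_2(\FF_q[\pi_x^{-1}])$ and reduce matrices to the normal forms $p_{nx}$ by explicit elementary operations (this is the ``algorithm'' invoked in Remark \ref{rem-calculation} and the style of computation in Lemma \ref{lemma2.1}). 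The geometric route you take is conceptually cleaner and explains \emph{why} the answer is indexed by $\N$ (and why genus $0$ matters), but the matrix-reduction route is the one that survives into Section \ref{sec-ramified}, where the level structure $K'$ destroys the clean $\Bun_2$ classification and one must work with fibers of $P$ by hand; so for the purposes of this paper the elementary approach is the load-bearing one. Your two points of care -- that $p_{mx}$ realizes the twist by $\cO(\pm m)$ and that $G(F)$--$K$ equivalence is exactly bundle isomorphism -- are indeed the only places where something could go wrong, and both are standard.
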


Let $\Gamma_x:=\bigcap_{y\neq x} (G(F)\cap K_y) =\GL_2(\FF_q[\pi_x^{-1}])$.  The map 
 \[ (.)_x : \Gamma_x \setminus G_x/Z_xK_x \longrightarrow G(F) \setminus G(\mathbb{A})/Z K\]
which is defined by completing the adele with the identity matrix in the places different from $x$, is  well-defined and bijective. 
This is the Strong Approximation for $\mathrm{SL}_2$, we refer \cite[Lemma 9.5.9 and Thm. E.2.1]{Laumon97} for a proof.
Although, in the next section, we prove it for every $K' \in \mathcal{V}$.

Let $y \in |\P^1|$ of degree $d$. As we pointed out in the previous section, in order to describe the action of $\Phi_y$ on $\cA^K$ we  build the graph $\cG_{\Phi_y,K}$. We shall do it using the equivalence between $\Gamma_x \setminus G_x/Z_xK_x$ and $G(F) \setminus G(\mathbb{A})/Z K$. We denote $g \sim g'$ whether $g,g' \in G_x$ are equivalent in $\Gamma_x \setminus G_x/Z_xK_x$. Follows from \cite[App. A]{oliver-graphs} that
for every $n \in \N$, the neighbors of $c_{nx}$ in $\cG_{\Phi_y,K}$ are the classes of
\[  \begin{pmatrix} \pi_x^d&b_0+b_1\pi_x+...+b_{d-1}\pi_x^{d-1} \\ &1 \end{pmatrix}p_{nx}=\begin{pmatrix} \pi_x^{d-n}&b_0+b_1\pi_x+...+b_{d-1}\pi_x^{d-1} \\ &1 \end{pmatrix} \]
and 
\[ \begin{pmatrix} 1& \\ &\pi_x^d \end{pmatrix}p_{nx}=\begin{pmatrix} \pi_x^{-n}& \\ &\pi_x^d \end{pmatrix}
\]
in $\Gamma_x \setminus G_x/Z_xK_x$, for  $b_0,b_1,...,b_{d-1} \in \FF_q$. See  \cite[Prop. 5.4]{alvarenga19} for generalization to higher rank matrices. 

\begin{df} For every $k \in \N$, we define $\cN_k:=\{c_0,c_x,c_{2x},...,c_{kx}\} \subset \mathrm{Vert}\; \cG_{\Phi_y,K}$. We call $\mathfrak{N}_d := \cN_{d-1}$ the nucleus of $\cG_{\Phi_y,K}$, where $d = |y|$. See \ \cite[Def. 8.6]{oliver-graphs}. 
\end{df}


\begin{lemma} \label{lemma2.1}
    Let $b_0, b_1,...,b_{d-1} \in \FF_q$ and $n \in \N$ be such that $ n\leq d-1$. Then the class of 
    \[\begin{pmatrix} \pi_x^{d-n}&b_0+b_1\pi_x+...+b_{d-1}\pi_x^{d-1} \\ &1 \end{pmatrix}\]
    in  $\Gamma_x \setminus G_x/Z_xK_x$ belongs to $\mathfrak{N}_d $ unless that $n=0$ and $b_i = 0$
    for $i=1, \ldots, d-1$.
\end{lemma}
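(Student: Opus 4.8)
The plan is to work inside $\Gamma_x\setminus G_x/Z_xK_x$ and show that a matrix of the stated form can be reduced, using left multiplication by elements of $\Gamma_x=\GL_2(\FF_q[\pi_x^{-1}])$, right multiplication by $K_x$, and scaling by $Z_x$, to one of the representatives $p_{0x},p_{1x},\dots,p_{(d-1)x}$, i.e.\ to an element of $\mathfrak N_d=\cN_{d-1}$. First I would recall that the representatives $p_{kx}=\big(\begin{smallmatrix}\pi_x^{-k}&\\&1\end{smallmatrix}\big)$ for $k\ge 0$ exhaust the double coset space, so the only thing to prove is that the class of
\[
M=\begin{pmatrix}\pi_x^{d-n}&\beta\\&1\end{pmatrix},\qquad \beta=b_0+b_1\pi_x+\dots+b_{d-1}\pi_x^{d-1},\ 0\le n\le d-1,
\]
equals $c_{kx}$ for some $k\le d-1$ (and to see which $k$). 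The key observation is that $\beta$ is a polynomial in $\pi_x$ of degree $\le d-1$, hence lies in $\FF_q[\pi_x]\subset\mathcal O_x$; moreover $\FF_q[\pi_x]$ is killed by $\FF_q[\pi_x^{-1}]$-row operations only in a limited way, so the $\pi_x$-adic valuation of $\beta$ controls everything.

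The main steps, in order: (1) If $\beta=0$, then $M=p_{(d-n)x}$ with $d-n\in\{1,\dots,d\}$; the only value outside $\{0,\dots,d-1\}$ is $d-n=d$, i.e.\ $n=0$ and $\beta=0$, which is exactly the excluded case — so in the non-excluded sub-case $\beta=0$ we land in $\mathfrak N_d$. (Note $d-n=d$ forces $p_{dx}=c_{dx}\notin\mathfrak N_d$, which is why the exception is needed.) (2) If $\beta\ne 0$, write $v=v_x(\beta)\ge 0$ for its $\pi_x$-adic valuation, so $v\le d-1$. Factor $\beta=\pi_x^{v}u$ with $u\in\mathcal O_x^\times$. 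I would then right-multiply $M$ by a suitable element of $K_x$ (an elementary matrix clearing or normalizing entries, using that $u$ is a unit) and, if $d-n\le v$, use the column to absorb $\pi_x^{d-n}$, reducing $M$ to $\big(\begin{smallmatrix}0&\pi_x^{v}\\ \ast&\ast\end{smallmatrix}\big)$-type and then by a permutation in $K_x$ and scaling to $p_{vx}$; if $d-n> v$, instead left-multiply by $\big(\begin{smallmatrix}1&\\ -\pi_x^{-(d-n)}\ast&1\end{smallmatrix}\big)\in\Gamma_x$ or similar to clear the bottom, landing at $p_{\min(v,d-n)x}$ — in all cases the resulting index is $\min(v,\,d-n)\le d-1$. (3) Conclude that the class lies in $\cN_{d-1}=\mathfrak N_d$ whenever we are not in the excluded case.

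The step I expect to be the main obstacle is (2): carefully choosing the row/column operations so that they genuinely lie in $\Gamma_x$ (entries in $\FF_q[\pi_x^{-1}]$, determinant a unit) on the left and in $K_x=\GL_2(\mathcal O_x)$ on the right — the asymmetry between the two rings is what makes the bookkeeping delicate, and one must track that the $\pi_x$-power never drops below $0$ nor rises above $d-1$. A clean way to organize this is to compute the elementary divisor / Smith-type normal form of $M$ over the (non-principal in the relevant sense, but locally PID) data: the relevant invariant is $\min\!\big(v_x(\beta),\,d-n\big)$, and the whole argument amounts to showing this invariant is a complete invariant for the $c_{kx}$ with $0\le k$, then noting $\min(v_x(\beta),d-n)\le d-1$ automatically when $\beta\ne0$ or $n\ge 1$. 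I would also double-check the boundary cases $n=d-1$ and $v=d-1$ explicitly to make sure no off-by-one error pushes us to $c_{dx}$.
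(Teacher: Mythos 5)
Your proposed reduction rests on the claim that the class of $M=\bigl(\begin{smallmatrix}\pi_x^{d-n}&\beta\\&1\end{smallmatrix}\bigr)$ in $\Gamma_x\setminus G_x/Z_xK_x$ is $c_{kx}$ with $k=\min\bigl(v_x(\beta),\,d-n\bigr)$, and this invariant is false. Two concrete failures. First, take $d=2$, $n=0$, $\beta=b_0\in\FF_q^{\times}$: your formula gives $\min(0,2)=0$, i.e.\ $c_0\in\mathfrak N_2$, but left multiplication by $\bigl(\begin{smallmatrix}1&-b_0\\&1\end{smallmatrix}\bigr)\in\Gamma_x$ clears $\beta$ and shows the class is $c_{2x}\notin\mathfrak N_2$. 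This is exactly why the lemma excludes ``$n=0$ and $b_1=\dots=b_{d-1}=0$'' with $b_0$ \emph{unconstrained}; your step (1) misreads the exception as ``$n=0$ and $\beta=0$''. Second, take $d=2$, $n=0$, $\beta=\pi_x$: your formula gives $c_{1x}$, but $v_x(\det)$ modulo $2$ is constant on double cosets (since $\det\Gamma_x,\det K_x$ have valuation $0$ and $\det Z_x$ has even valuation), so the class must be $c_{kx}$ with $k\equiv d-n\equiv 0\pmod 2$; it is in fact $c_0$. The asymmetry you flag in passing is the entire content of the problem: upper-triangular operations only let you reduce $\beta$ modulo $\FF_q[\pi_x^{-1}]+\pi_x^{d-n}\cO_x$, after which no elementary row/column operation of the kind you describe is available, and there is no Smith-normal-form classification because the rings acting on the two sides are different.

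What the paper does instead, after that first reduction leaves $\beta=s\pi_x^{k}$ with $s\in\cO_x^{\times}$ and $1\le k\le m-1$, is apply a Weyl-element identity
\[
\begin{pmatrix}\pi_x^{m}&s\pi_x^{k}\\&1\end{pmatrix}\sim\begin{pmatrix}\pi_x^{m-2k}&s^{-1}\pi_x^{-k}\\&1\end{pmatrix},
\]
then re-reduce the new upper-right entry and iterate the strictly decreasing recursion $m\mapsto m-2k$ (which preserves the parity of $m$, consistent with the determinant obstruction above) until the exponent drops to at most $1$; the bound $|m_j|\le d-1$ then follows by tracking $k_i\le m_i-1$ through the iteration. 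Your proposal contains no analogue of this inversion step, so the gap is not an off-by-one bookkeeping issue but a missing idea, and the answer it predicts is wrong even in cases where the lemma's conclusion happens to hold.
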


\begin{proof} Observe that if $n=0$ and $b_1 = \cdots = b_{d-1}=0$,
we get the equivalence class $c_{dx}$. 

In the following we suppose $n>0$ and some $b_i \neq 0$ for $i \in \{1, \ldots, d-1\}.$ We denote $m_1=d-n\geq1$ and observe that
\begin{align*}
\begin{pmatrix} \pi_x^{m_1}&b_0+b_1\pi_x+...+b_{d-1}\pi_x^{d-1} \\ &1 \end{pmatrix} &=
\begin{pmatrix} 1&b_0 \\ &1 \end{pmatrix}
\begin{pmatrix} \pi_x^{m_1}&\sum_{i=1}^{m_1-1}b_i\pi_x^{i} \\ &1 \end{pmatrix}
\begin{pmatrix} 1 &\sum_{i=m_1}^{d-1}  b_i\pi_x^{i-m_1} \\ &1 \end{pmatrix}\\
&\sim \begin{pmatrix} \pi_x^{m_1}&\sum_{i=1}^{m_1-1}b_i\pi_x^{i} \\ &1 \end{pmatrix}.
\end{align*}
Let $k_1$ stand for the smallest index $i \in \{1,\ldots, d-1\}$, such that $b_i \neq 0$. If $k_1 > m_1-1$,  we can conclude from above. 

Otherwise, we can consider 
$s:=b_{k_1}+...+b_{m_1-1}\pi_x^{m_1-1-k_1} \in \cO_x^{\times}$, $s^{-1}=:\sum_{i=0}^{+\infty}a_i\pi_x^{i}.$
and conclude that
 \begin{align*}
\begin{pmatrix} \pi_x^{m_1}&\sum_{i=1}^{m_1-1}b_i\pi_x^{i} \\ &1 \end{pmatrix}
&=\begin{pmatrix} \pi_x^{m_1}&s\pi_x^{k_1} \\ &1 \end{pmatrix}\\
&=\begin{pmatrix} &1 \\ 1& \end{pmatrix}
\begin{pmatrix} \pi_x^{m_1-2k_1}&s^{-1}\pi_x^{-k_1} \\ &1 \end{pmatrix}
\begin{pmatrix} s^{-1}\pi_x^{k_1}& \\ &s^{-1}\pi_x^{k_1} \end{pmatrix}
\begin{pmatrix} -1& \\ s\pi_x^{m_1-k_1}&s^2 \end{pmatrix}\\
&\sim \begin{pmatrix} \pi_x^{m_1-2k_1}&s^{-1}\pi_x^{-k_1} \\ &1 \end{pmatrix}\\
&= \begin{pmatrix} \pi_x^{m_1-2k_1}&\sum_{i=0}^{+\infty}a_i\pi_x^{i-k_1} \\ &1 \end{pmatrix} =: M.\\
\end{align*}
Hence, 
\[
M =     \begin{pmatrix}1&\sum_{i=0}^{k_1}a_i\pi_x^{i-k_1} \\ &1 \end{pmatrix}
        \begin{pmatrix} \pi_x^{m_1-2k_1}& \\ &1 \end{pmatrix}
        \begin{pmatrix}1&\sum_{i=k_1+1}^{+\infty}a_i\pi_x^{i+k_1-m_1} \\ &1 \end{pmatrix} \]
 if  $m_1-2k_1 \leq 1$, and
  \[
M =     \begin{pmatrix}1&\sum_{i=0}^{k_1}a_i\pi_x^{i-k_1} \\ &1 \end{pmatrix}
        \begin{pmatrix} \pi_x^{m_1-2k_1}&\sum_{i=1}^{m_1-2k_1-1}a_{i+k_1}\pi_x^{i} \\ &1 \end{pmatrix} \begin{pmatrix}1&\sum_{i=m_1-2k_1}^{+\infty}a_i\pi_x^{i+k_1-m_1} \\ &1 \end{pmatrix}
\]
if $m_1-2k_1 > 1$. Therefore, 
\[ M \sim 
\begin{cases}
      \begin{pmatrix} \pi_x^{m_1-2k_1}& \\ &1 \end{pmatrix}
     = p_{-(m_1-2k_1)} & \mbox{ if } m_1-2k_1 \leq 1  \\\\
       \begin{pmatrix} \pi_x^{m_1-2k_1}&\sum_{i=1}^{m_1-2k_1-1}a_{i+k_1}\pi_x^{i} \\ &1 \end{pmatrix} & \mbox{ if } m_1-2k_1 > 1. 
 \end{cases}           
\]

In the second situation we are back to the initial problem with $m_2:=m_1-2k_1$ instead of $m_1$ and $a_{i+k_1}$ instead of $b_i$. The idea now is to iterate this process until we get to the first situation, i.e. $m_k \leq 1$. Since $k_1\geq1$, we have $m_2<m_1$. After a finite number $j-1$ of iterations where we define $m_i$ and $k_i$ for $i \geq j$ with $m_i\geq0$ if $i < j$, we will finally get $m_j \leq 1$.  Thus, we are left with the first situation in the previous computations, which yields 
\[\begin{pmatrix} \pi_x^{m_1}&b_1\pi_x+...+b_{d-1}\pi_x^{m_1} \\ &1 \end{pmatrix}\sim p_{-m_jx}.\]
But we know for $n\geq1$ that  $-m_j\leq k_{j-1}\leq m_{j-1}\leq m_1 =d-n\leq d-1$. Then the class of the matrix of our interest is $c_{m_jx}=c_{|m_j|x}\in \mathfrak{N}_d$, if $n\geq1$.

\end{proof}

\begin{lemma}
 The sum of the multiplicities of all edges with origin in a fixed vertex is $q^d+1$, where $d = |y|$.
\end{lemma}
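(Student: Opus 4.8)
The statement to prove is that for a fixed vertex $c_{nx}$ in $\cG_{\Phi_y,K}$, the sum $\sum_{h} m_{\Phi_y,K}(c_{nx},h)$ over all $\Phi_y$-neighbors $h$ equals $q^d+1$, where $d=|y|$. The plan is to count, with multiplicity, the list of neighbors described right before Lemma \ref{lemma2.1}: the $\Phi_y$-neighbors of $c_{nx}$ are the classes in $\Gamma_x\setminus G_x/Z_xK_x$ of the $q^d$ matrices
\[
\begin{pmatrix} \pi_x^{d-n} & b_0+b_1\pi_x+\cdots+b_{d-1}\pi_x^{d-1} \\ & 1\end{pmatrix},\qquad b_0,\dots,b_{d-1}\in\FF_q,
\]
together with the one class of $\begin{pmatrix}\pi_x^{-n} & \\ & \pi_x^d\end{pmatrix}$. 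A priori these $q^d+1$ matrices need not land in $q^d+1$ distinct classes; the multiplicity $m_{\Phi_y,K}(c_{nx},h)$ is precisely the number of matrices in this list whose class equals $h$ (this is how the $m_i$ arise in Proposition \ref{propfund} when several representatives of the Hecke double coset map to the same double coset on the left). Hence the sum of all multiplicities over the neighbors is exactly the cardinality of this list counted as a multiset, namely $q^d+1$.

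To make this rigorous I would first recall why the coefficients $m_i$ in Proposition \ref{propfund} are genuinely the counting multiplicities here. One unwinds the definition: $\Phi_y(f)(g)=\int \Phi_y(h)f(gh)\,dh$, and since $\Phi_y=\mathrm{char}_{K p_y' K}$ with $p_y'=\big(\begin{smallmatrix}\pi_y&\\&1\end{smallmatrix}\big)$, decomposing $Kp_y'K=\coprod_{j=1}^{N} h_j K$ into $N=q_y+1=q^d+1$ left cosets (the standard index computation for $\GL_2$ over a local field with residue field of size $q_y$) gives $\Phi_y(f)(g)=\sum_{j=1}^{N} f(gh_j)$, because $\mathrm{vol}(K)=1$. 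The explicit coset representatives $h_j$, pulled back to $G_x$ via strong approximation, are exactly the $q^d+1$ matrices listed above (this is the content of \cite[App. A]{oliver-graphs} / \cite[Prop. 5.4]{alvarenga19}). Grouping the $N$ values $f(gh_j)$ according to which class $h\in\Gamma_x\setminus G_x/Z_xK_x$ the representative $gh_j$ falls into, and using right $Z K$-invariance of $f$, yields $\Phi_y(f)(g)=\sum_h m_{\Phi_y,K}(g,h)f(h)$ with $m_{\Phi_y,K}(g,h)=\#\{j : [gh_j]=h\}$, a nonnegative integer. Summing over $h$ recovers $N$.

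The remaining step is just to pin down $N=q^d+1$. This is the index $[K_y p_y' K_y : K_y]$ computed in, e.g., \cite[Thm. 4.6.1]{bump}: the double coset $K_y\big(\begin{smallmatrix}\pi_y&\\&1\end{smallmatrix}\big)K_y$ is the disjoint union of $\big(\begin{smallmatrix}\pi_y&c\\&1\end{smallmatrix}\big)K_y$ for $c$ ranging over representatives of $\cO_y/\pi_y\cO_y$ (that is $q_y=q^d$ cosets) together with $\big(\begin{smallmatrix}1&\\&\pi_y\end{smallmatrix}\big)K_y$, for a total of $q^d+1$; at all places $z\neq y$ the local component is $K_z$ and contributes a single coset. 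So there are exactly $q^d+1$ terms, and the sum of the edge multiplicities out of any vertex is $q^d+1$.

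I do not expect a genuine obstacle here: the only thing that requires care is the bookkeeping identifying the "number of terms in the Hecke sum" with the "sum of edge multiplicities," i.e. making explicit that collisions among the $q^d+1$ representatives (which do happen for small $n$, as Lemma \ref{lemma2.1} shows, and which are collected into the nucleus) only redistribute weight among fewer vertices without changing the total. Once that is said clearly, the count $q^d+1$ is immediate from the local coset decomposition of $\Phi_y$.
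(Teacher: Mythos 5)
Your argument is correct. The paper itself does not prove this lemma; it simply cites \cite[Prop.~2.3]{oliver-graphs} (and \cite[Thm.~2.6]{alvarenga19} for the general statement), whereas you unwind the definitions and prove it from scratch. What you supply is essentially the content of that cited proposition: the action of $\Phi_y$ on a right $K$-invariant $f$ reduces, via $\mathrm{vol}(K)=1$ and the left coset decomposition $K\big(\begin{smallmatrix}\pi_y&\\&1\end{smallmatrix}\big)K=\coprod_{j=1}^{N}h_jK$, to a sum of $N$ values $f(gh_j)$; the edge multiplicities $m_{\Phi_y,K}(g,h)$ are the collision counts $\#\{j:[gh_j]=h\}$, so their total is $N$ regardless of how the representatives collide; and $N=q_y+1=q^d+1$ by the standard local computation (the $q^d$ cosets $\big(\begin{smallmatrix}\pi_y&c\\&1\end{smallmatrix}\big)K_y$ with $c$ running over $\cO_y/\pi_y\cO_y$, plus $\big(\begin{smallmatrix}1&\\&\pi_y\end{smallmatrix}\big)K_y$). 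You are also right to flag that the only delicate point is the identification of ``number of terms in the Hecke sum'' with ``sum of edge multiplicities,'' which is exactly where the uniqueness statement of Proposition~\ref{propfund} enters. The trade-off is only one of presentation: the paper keeps the section short by outsourcing the count, while your version is self-contained and makes visible why collisions (which do occur inside the nucleus, per Lemma~\ref{lemma2.1}) cannot change the total weight.
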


\begin{proof} This follows from \cite[Prop. 2.3]{oliver-graphs}. See \cite[Thm. 2.6]{alvarenga19} for a generalization. 
\end{proof}

Next proposition describes the graph $\cG_{\Phi_y,K}$ in general.  

\begin{prop} \label{prop-edgesnonramified}
Let $y \in |\P^1|$ of degree $d$. The following holds in the graph of $\Phi_y$. 
\begin{enumerate}[(i)]

        \item If $n \geq d$, $c_{nx}$ has exactly two $\Phi_y$-neighbors, which are $c_{(n-d)x}$ with multiplicity $q^d$ and $c_{(n+d)x}$ with multiplicity 1.\medskip 
        
        \item  Every vertex $c_{nx} \in \mathfrak{N}_d -\{c_0\}$ has one, and only one, $\Phi_y$-neighbor outside $\mathfrak{N}_d$. Namely $c_{(n+d)x}$. Moreover the multiplicity of the edge from $c_{nx}$  to $c_{(n+d)x}$ is $1$.\medskip
        
        \item The neighbors of $c_0$ are in $\mathfrak{N}_d$, unless $c_d$, which is $\Phi_y$-neighbor of $c_0$ with multiplicity $q+1$.
   
\end{enumerate}
\end{prop}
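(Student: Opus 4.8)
The plan is to carry out every computation inside $\Gamma_x\setminus G_x/Z_xK_x$, transported from $G(F)\setminus G(\A)/ZK$ by the bijection $(.)_x$, and to work directly with the list of $q^{d}+1$ matrices recalled just before Lemma~\ref{lemma2.1} that represent the $\Phi_y$-neighbours of $c_{nx}$: the $q^{d}$ ``unipotent'' matrices $u(b):=\big(\begin{smallmatrix}\pi_x^{d-n}& b_0+b_1\pi_x+\cdots+b_{d-1}\pi_x^{d-1}\\ &1\end{smallmatrix}\big)$ indexed by $b=(b_0,\dots,b_{d-1})\in\Fq^{d}$, and the single ``diagonal'' matrix $v:=\big(\begin{smallmatrix}\pi_x^{-n}&\\ &\pi_x^{d}\end{smallmatrix}\big)$. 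The multiplicity of an edge $c_{nx}\to c$ is then the number of these $q^{d}+1$ matrices whose class equals $c$, which matches the previous lemma (the multiplicities out of a vertex sum to $q^{d}+1$). One remark handles the diagonal matrix uniformly in all three items: $v=\pi_x^{d}\,p_{(n+d)x}$ and $\pi_x^{d}$ is central in $G_x$, so $v$ always represents the class $c_{(n+d)x}$.

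For item (i), take $n\ge d$. Pulling $p_{(n-d)x}=\big(\begin{smallmatrix}\pi_x^{d-n}&\\ &1\end{smallmatrix}\big)$ out on the left gives $u(b)=p_{(n-d)x}\big(\begin{smallmatrix}1& b_0\pi_x^{n-d}+\cdots+b_{d-1}\pi_x^{n-1}\\ &1\end{smallmatrix}\big)$, and the off-diagonal entry of the right factor lies in $\cO_x$ since every exponent $n-d+i$ is $\ge 0$; hence that factor lies in $K_x$ and $u(b)$ represents $c_{(n-d)x}$ for \emph{every} $b$. Combined with $v\mapsto c_{(n+d)x}$ and the fact that $c_{(n-d)x}\ne c_{(n+d)x}$ (as $d\ge 1$), this shows $c_{nx}$ has exactly two neighbours: $c_{(n-d)x}$, represented by all $q^{d}$ matrices $u(b)$, hence of multiplicity $q^{d}$, and $c_{(n+d)x}$, represented only by $v$, hence of multiplicity $1$.

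For items (ii) and (iii), take $0\le n\le d-1$, so $d-n\ge 1$, and apply Lemma~\ref{lemma2.1} to the matrices $u(b)$: the class of $u(b)$ lies in $\mathfrak{N}_d$ except when $n=0$ and $b_1=\cdots=b_{d-1}=0$. If $1\le n\le d-1$, then every $u(b)$ lands in $\mathfrak{N}_d$ while $v$ lands on $c_{(n+d)x}\notin\mathfrak{N}_d$ (since $n+d>d-1$); so $c_{nx}$ has a single neighbour outside $\mathfrak{N}_d$, namely $c_{(n+d)x}$, of multiplicity $1$ because $v$ alone represents it — this is (ii). If $n=0$, the exceptional matrices are the $q$ matrices $\big(\begin{smallmatrix}\pi_x^{d}&b_0\\ &1\end{smallmatrix}\big)$, $b_0\in\Fq$; left-multiplying by $\big(\begin{smallmatrix}1&-b_0\\ &1\end{smallmatrix}\big)\in\GL_2(\Fq)\subset\Gamma_x$ reduces each of them to $\big(\begin{smallmatrix}\pi_x^{d}&\\ &1\end{smallmatrix}\big)=p_{-dx}$, whose class is $c_{dx}$, and $v$ also represents $c_{dx}$. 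Since the remaining $q^{d}-q$ matrices $u(b)$ stay in $\mathfrak{N}_d$ while $c_{dx}\notin\mathfrak{N}_d$, the vertex $c_0$ has exactly one neighbour outside $\mathfrak{N}_d$, namely $c_{dx}$, of multiplicity $q+1$ (the $q$ exceptional matrices together with $v$) — this is (iii).

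I do not anticipate a real obstacle here: the only non-formal ingredient is Lemma~\ref{lemma2.1}, which is already proved, and the rest is membership checks in $\cO_x$ and $\Gamma_x$ together with the bookkeeping of which of the $q^{d}+1$ representatives coincide. The one point demanding care is to keep ``multiplicity'' consistently meaning ``number of coinciding representatives'', so that the counts $q^{d}$, $1$, $1$ and $q+1$ come out exactly as claimed rather than being conflated with the number of distinct neighbours.
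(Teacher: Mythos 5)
Your proposal is correct and follows essentially the same route as the paper's proof: the same two factorizations (pulling out $p_{(n-d)x}$ from the unipotent representatives for $n\ge d$, and writing the diagonal representative as a central scalar times $p_{(n+d)x}$), combined with Lemma~\ref{lemma2.1} and the count of coinciding representatives among the $q^d+1$ listed matrices. You simply spell out the bookkeeping for items (ii) and (iii) that the paper leaves implicit in the phrase ``follows from the previous lemma and the above observations.''
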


\begin{proof} First, we observe that for every $n\geq d$ and every $b_0,...,b_{d-1} \in \FF_q$ 
    \begin{align*}
\begin{pmatrix} \pi_x^{d-n}&b_0+b_1\pi_x+...+b_{d-1}\pi_x^{d-1} \\ &1 \end{pmatrix}&= p_{(n-d)x}\begin{pmatrix} 1&(b_0+b_1\pi_x+...+b_{d-1}\pi_x^{d-1})\pi_x^{n-d} \\ &1 \end{pmatrix}\\
    & \sim p_{(n-d)x}   
\end{align*}
In addition, for every $n \in \N$
\[\begin{pmatrix} \pi_x^{-n}& \\ &\pi_x^d \end{pmatrix}=\begin{pmatrix} \pi_x^{d}& \\ &\pi_x^d \end{pmatrix}\begin{pmatrix} \pi_x^{-(n+d)}& \\&1\end{pmatrix}\sim p_{(n+d)x}.\]
Therefore, the proposition follows from the previous lemma and the above observations. 
\end{proof}


We wish to express the last proposition in terms of the graph $\mathcal{G}_{\Phi_y,K}$. In order to do that, we first establish the following drawing conventions, which holds true for every graph $\mathcal{G}_{\Phi,K'}$.  The vertices $g \in \mathrm{Vert}\; \mathcal{G}_{\Phi,K'}$ are represented by 
labelled dots, and an edge $(g,h,m) \in  \mathrm{Edge}\; \mathcal{G}_{\Phi,K'}$ together with its origin $g$ and its terminus $h$ is drawn as Figure \ref{fig1}.

\begin{center}

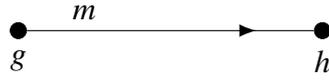
\begin{figure}[h]
\centering

\begin{minipage}[b]{0.45\linewidth}
\[
  \beginpgfgraphicnamed{tikz/fig1}
  \begin{tikzpicture}[>=latex, scale=2]
        \vertex[circle,fill,label={below:$g$}](00) at (0,0) {};
        \vertex[circle,fill,label={below:$h$}](10) at (2,0) {};
    \path[-,font=\scriptsize]
   (00) edge[->-=0.8] node[pos=0.2,auto,black] {\normalsize $m$} (10)
   ;
   \end{tikzpicture}
 \endpgfgraphicnamed
\]
\end{minipage} 
 \caption{The edge  $(g,h,m)$ in $\mathcal{G}_{\Phi,K'}$.}
  \label{fig1}
\end{figure}

\end{center}

Moreover, if the edges  $(g,h,m_1)$ and  $(h,g,m_2)$ in $\mathcal{G}_{\Phi,K'}$ exist, i.e.\ $m_1 m_2 \neq 0$, we draw both of them all at once as in the Figure \ref{fig2}. 
\begin{center}

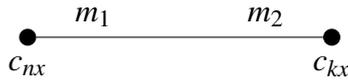
\begin{figure}[h]
\centering

\begin{minipage}[b]{0.45\linewidth}
\[
  \beginpgfgraphicnamed{tikz/fig2}
  \begin{tikzpicture}[>=latex, scale=2]
        \vertex[circle,fill,label={below:$c_{nx}$}](00) at (0,0) {};
        \vertex[circle,fill,label={below:$c_{kx}$}](10) at (2,0) {};
    \path[-,font=\scriptsize]
   (00) edge[-=0.8] node[pos=0.2,auto,black]{\normalsize $m_1$} 
   node[pos=0.8,auto,black]{\normalsize $m_2$}  (10)
   ;
   \end{tikzpicture}
 \endpgfgraphicnamed
\]
\end{minipage} 
 \caption{The edges $(g,h,m_1)$ and  $(h,g,m_2)$ in $\mathcal{G}_{\Phi,K'}$.}
  \label{fig2}
\end{figure}

\end{center}

Hence, the previous proposition can be illustrated as it is in the following Figure \ref{fig3}. The area that is delimited by the nucleus $\mathfrak{N}_{d}$ might have some edges, even if nothing is drawn.


\begin{center}
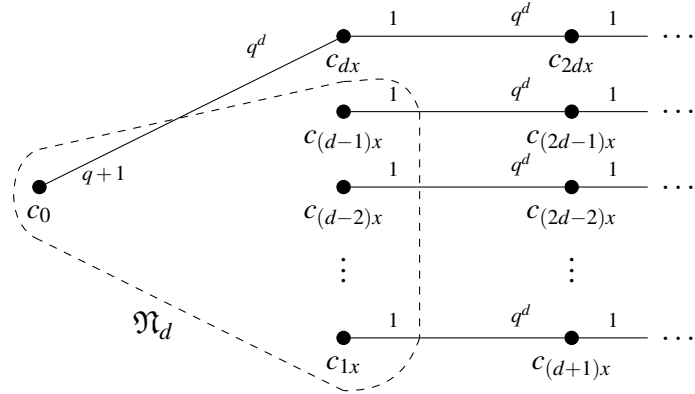
\begin{figure}[h]
\centering

\begin{minipage}[h]{\linewidth}
\[
  \beginpgfgraphicnamed{tikz/fig3}
  \begin{tikzpicture}[>=latex, scale=2]

        \vertex[circle,fill, minimum size = 5pt, label={below:\small $c_{0}$}](c0) at (0,0) {};

      \vertex[circle,fill,minimum size = 5pt, label={below:\small $c_{dx}$}](cd) at (2,1) {};
        \vertex[circle,fill,minimum size = 5pt, label={below:\small $c_{(d-1)x}$}](cd-1) at (2,0.5){};
    \vertex[circle,fill,minimum size = 5pt, label={below:\small $c_{(d-2)x}$}](cd-2) at (2,0) {};
    \vertex[circle,fill,minimum size = 5pt, label={below:\small $c_{1x}$}](c1) at (2,-1) {};
   \vertex[circle,fill,minimum size = 5pt, label={below:\small  $c_{2dx}$}](c2d) at (3.5,1) {};
        \vertex[circle,fill,minimum size = 5pt, label={below:\small $c_{(2d-1)x}$}](c2d-1) at (3.5,0.5){};
    \vertex[circle,fill,minimum size = 5pt, label={below:\small $c_{(2d-2)x}$}](c2d-2) at (3.5,0) {};
    \vertex[circle,fill,minimum size = 5pt, label={below:\small $c_{(d+1)x}$}](c1+d) at (3.5,-1) {};
   

\node at (2,-.5) {$\vdots$};

\node at (3.5,-.5) {$\vdots$};


  \path[-,font=\scriptsize]
   (c0) edge[-=0.8] node[pos=0.2,auto,below]{\tiny $q+1$} 
    node[pos=0.8,auto,black]{\tiny $q^d$}  (cd) ;

   \path[-,font=\scriptsize]
   (cd) edge[-=0.8] node[pos=0.2,auto,black]{\tiny $1$} 
    node[pos=0.8,auto,black]{\tiny $q^d$}  (c2d) ;
      \path[-,font=\scriptsize]
   (cd-1) edge[-=0.8] node[pos=0.2,auto,black]{\tiny $1$} 
    node[pos=0.8,auto,black]{\tiny $q^d$}  (c2d-1) ;
      \path[-,font=\scriptsize]
   (cd-2) edge[-=0.8] node[pos=0.2,auto,black]{\tiny $1$} 
    node[pos=0.8,auto,black]{\tiny $q^d$}  (c2d-2) ;
      \path[-,font=\scriptsize]
   (c1) edge[-=0.8] node[pos=0.2,auto,black]{\tiny $1$} 
    node[pos=0.8,auto,black]{\tiny $q^d$}  (c1+d) ;

   \path[-,font=\scriptsize]
   (c2d) edge[-=0.8] node[pos=0.5,auto,black]{\tiny $1$}  (4,1) ;
      \path[-,font=\scriptsize]
   (c2d-1) edge[-=0.8] node[pos=0.5,auto,black]{\tiny $1$} (4,0.5) ;
      \path[-,font=\scriptsize]
   (c2d-2) edge[-=0.8] node[pos=0.5,auto,black]{\tiny $1$} (4,0) ;
      \path[-,font=\scriptsize]
   (c1+d) edge[-=0.8] node[pos=0.5,auto,black]{\tiny $1$} (4,-1) ;
    
\node at (4.2,1) {$\ldots$};
\node at (4.2,0.5) {$\ldots$};
\node at (4.2,0) {$\ldots$};

\node at (4.2,-1) {$\ldots$};



\node[auto,below] at (0.75,-0.75) { $\mathfrak{N}_d $};

\draw[dashed] (0, 0.25) -- (2,0.7); 
\draw[dashed] (2.5,0.5) -- (2.5, -1);
\draw[dashed]  (2, -1.35) -- (0,-.35);

\draw[dashed] (0,-.35) .. controls (-.25,-.25) and (-.20,.20) .. (0,.25) ;
\draw[dashed] (2,0.7) .. controls (2.2,0.7) and (2.4,0.8) .. (2.5,0.5) ;
\draw[dashed] (2.5,-1) .. controls (2.4,-1.3) and (2.2,-1.35) .. (2,-1.35) ;

   \end{tikzpicture}
 \endpgfgraphicnamed
\]
\end{minipage} 
 \caption{Graph for $\mathcal{G}_{\Phi_y,K}$, with $|y|=d$.}
  \label{fig3}
\end{figure}

\end{center}

We can now state and prove the main theorem of this section, which compute the dimension of $\Phi_y$-eigenspaces.


\begin{thm} \label{thm-mainunramified}
Let $y \in |\P^1|$ of degree $d$ and $\lambda \in \C^{\times}$. 
The map
\[\begin{array}{cccc}
       \cA^{K}(\Phi_y, \lambda) & \longrightarrow  & \bigoplus_{\mathfrak{N}_d }\C \\[3pt]
   f & \longmapsto & (f(g))_{g \in \mathfrak{N}_d } \\[2pt]
\end{array}\] 
is an isomorphism of $\C$-vector spaces. In particular,  $\dim \cA^{K}(\Phi_y, \lambda)=d.$
\end{thm}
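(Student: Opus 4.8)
The plan is to show the evaluation map $f \mapsto (f(g))_{g \in \mathfrak{N}_d}$ is both injective and surjective, using the combinatorial description of $\mathcal{G}_{\Phi_y,K}$ given in Proposition \ref{prop-edgesnonramified}. The key observation is that the eigenequation $\Phi_y(f) = \lambda f$ translates, via the vertex-edge formula $\Phi(f)(g) = \sum_h m_{\Phi,K}(g,h) f(h)$, into a recursion on the values $f(c_{nx})$ for $n \geq 0$. Because the graph splits $\mathrm{Vert}\, \mathcal{G}_{\Phi_y,K} = \{c_{nx}\}_{n\geq 0}$ into residue classes modulo $d$ outside the nucleus, and by part (i) each $c_{nx}$ with $n \geq d$ has exactly the two neighbors $c_{(n-d)x}$ (multiplicity $q^d$) and $c_{(n+d)x}$ (multiplicity $1$), the eigenequation at $c_{nx}$ reads
\[ q^d f(c_{(n-d)x}) + f(c_{(n+d)x}) = \lambda f(c_{nx}), \qquad n \geq d, \]
which determines $f(c_{(n+d)x})$ from $f(c_{nx})$ and $f(c_{(n-d)x})$. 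So once the $d$ values $f(c_0), f(c_x), \dots, f(c_{(d-1)x})$ on the nucleus are fixed, together with the values on the next layer $f(c_{dx}), \dots, f(c_{(2d-1)x})$, all subsequent values are forced.

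First I would pin down the values on the layer $c_{dx}, \dots, c_{(2d-1)x}$ in terms of the nucleus values using parts (ii) and (iii). For $1 \leq n \leq d-1$, the eigenequation at $c_{nx} \in \mathfrak{N}_d \setminus \{c_0\}$ involves $f(c_{(n+d)x})$ with multiplicity $1$ by part (ii), all other neighbors of $c_{nx}$ lying inside $\mathfrak{N}_d$; hence
\[ f(c_{(n+d)x}) = \lambda f(c_{nx}) - \sum_{\substack{h \in \mathfrak{N}_d}} m_{\Phi_y,K}(c_{nx},h)\, f(h) \]
expresses $f(c_{(n+d)x})$ as an explicit linear combination of nucleus values. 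Similarly, the eigenequation at $c_0$, using part (iii) (the only neighbor of $c_0$ outside $\mathfrak{N}_d$ is $c_{dx}$, with multiplicity $q+1$), gives
\[ (q+1) f(c_{dx}) = \lambda f(c_0) - \sum_{h \in \mathfrak{N}_d} m_{\Phi_y,K}(c_0,h)\, f(h), \]
so $f(c_{dx})$ is determined by the nucleus values. Combining, the entire second layer is a linear function of $(f(g))_{g\in\mathfrak{N}_d}$, and then the recursion from part (i) propagates this to all layers.

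\textbf{Injectivity.} If $f \in \cA^K(\Phi_y,\lambda)$ vanishes on $\mathfrak{N}_d$, then by the above, $f(c_{dx}) = \dots = f(c_{(2d-1)x}) = 0$ as well (here I use $\lambda \in \C^\times$ is not even needed for the second-layer step, only the structure), and then the recursion forces $f(c_{nx}) = 0$ for all $n \geq 0$; since $\{c_{nx}\}_{n\geq 0}$ is a full set of representatives for $G(F)\setminus G(\A)/ZK$, we get $f = 0$.

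\textbf{Surjectivity.} Given an arbitrary tuple $(a_g)_{g\in\mathfrak{N}_d} \in \bigoplus_{\mathfrak{N}_d}\C$, define $f$ on the vertices by setting $f(g) = a_g$ for $g \in \mathfrak{N}_d$, defining the second layer by the formulas above, and extending to all $c_{nx}$, $n \geq 2d$, by the recursion $f(c_{(n+d)x}) = \lambda f(c_{nx}) - q^d f(c_{(n-d)x})$. This defines a function on $G(F)\setminus G(\A)/ZK$, hence a smooth, $K$-finite, left $G(F)$-invariant function on $G(\A)$. One checks it has moderate growth: the recursion is linear with bounded coefficients, so $|f(c_{nx})|$ grows at most exponentially in $n$, which matches the polynomial-in-$\|p_{nx}\|$ bound since $\|p_{nx}\| \asymp q^{n}$ grows exponentially in $n$ as well — more precisely the characteristic roots of $T^2 - \lambda T + q^d$ have modulus at most $q^{d}$ (product $q^d$), giving $|f(c_{nx})| = O(q^{dn})$, which is $O(\|p_{nx}\|^d)$, hence moderate growth. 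It remains to verify $\Phi_y(f) = \lambda f$ at every vertex: at $c_{nx}$ with $n \geq d$ this is the defining recursion read backwards (using part (i)); at $c_{nx}$ with $1 \leq n \leq d-1$ it is the second-layer formula read backwards (using part (ii) and that all remaining neighbors lie in $\mathfrak{N}_d$ where $f$ agrees with the prescribed values); at $c_0$ it is the $c_0$-formula (using part (iii)). Finally $\mathcal{H}$-finiteness: since the $\cH_K$-action is determined by the $\Phi_y$'s and $f$ lies in the $\Phi_y$-eigenspace, $\cH_K \cdot f$ is spanned by translates under the other $\Phi_{y'}$, but one argues as usual (or cites that eigenforms are automorphic) that this stays finite-dimensional — alternatively, restrict to checking $f \in \cA^K$ directly as the other authors do. Hence the map is bijective, and $\dim \cA^K(\Phi_y,\lambda) = |\mathfrak{N}_d| = d$.

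\textbf{Main obstacle.} The genuinely delicate point is surjectivity: showing that the function produced by the recursion is actually an \emph{automorphic form}, i.e.\ verifying moderate growth (controlling the growth rate of the linear recursion against the height function on $\{p_{nx}\}$) and $\cH$-finiteness. The injectivity and the reduction of the eigenequation to the recursion are essentially bookkeeping with Proposition \ref{prop-edgesnonramified}; I expect the growth estimate — comparing the characteristic roots of $T^2 - \lambda T + q^d$ with the growth of $\|p_{nx}\|$ — to be where care is required, together with confirming that fixing values only on $\mathfrak{N}_d$ (rather than on two layers) suffices, which hinges precisely on parts (ii) and (iii) pinning down the second layer from the first.
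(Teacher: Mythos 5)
Your proposal follows essentially the same route as the paper: Proposition \ref{prop-edgesnonramified} turns the eigenequation into a layer-by-layer recursion, agreement on $\mathfrak{N}_d$ propagates outward to give injectivity, and surjectivity is obtained by constructing $f$ explicitly from the nucleus values and checking moderate growth. One small inaccuracy: the roots of $T^2-\lambda T+q^d$ need not have modulus at most $q^d$ (their product is $q^d$, but for large $|\lambda|$ one root has modulus close to $|\lambda|$); this does not harm the argument, since any exponential bound $|f(c_{nx})|=O(C^{n})$ is still $O(\norm{p_{nx}}^N)$ for $N$ with $q^N>C$, because $\norm{p_{nx}}=q^n$.
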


\begin{proof} According to Proposition \ref{prop-edgesnonramified}, for each $l \in \{0,...,d-1\}$, the $\Phi_y$-neighbors of $c_{(kd+l)x}$ are all in $\cN_{kd+l}$, unless $c_{((k+1)d+l)x}$.

Let $k\geq 0$ and $f,f' \in \cA^{K}(\Phi_y, \lambda)$ such that $f_{|\cN_{(k+1)d-1}}=f_{|\cN_{(k+1)d-1}}'$. Since $f,f'$ are  $\Phi_y$-eigenforms, we can write 
\[\lambda f(c_{(kd+l)x})=\Phi_y(f)(c_{(kd+l)x})= f(c_{((k+1)d+l)x})-\sum_{g \in \cN_{kd+l}}m(c_{(kd+l)x},g)f(g)\]
and
\[\lambda f'(c_{(kd+l)x})=\Phi_y(f')(c_{(kd+l)x})= f'(c_{((k+1)d+l)x})-\sum_{g \in \cN_{kd+l}}m(c_{(kd+l)x},g)f'(g).\]

The assumption $f_{|\cN_{(k+1)d-1}}=f_{|\cN_{(k+1)d-1}}'$ together above identities imply 
\[ f_{|\cN_{kd+l}}=f_{|\cN_{kd+l}}' \quad \text{ and } \quad f(c_{((k+1)d+l)x})=f'(c_{((k+1)d+l)x}).\]  
Where we use that the graph of $\Phi_y$ (i.e. the multiplicities) does not depend on the automorphic form. 

The above is true for every $l \in \{0,...,d-1\}$, then $f_{|\cN_{(k+2)d-1}}=f_{|\cN_{(k+2)d-1}}'$. Hence, we conclude by induction that $f_{|\mathfrak{N}_{d}}=f_{|\mathfrak{N}_{d}}'$, which implies $f=f'$ and therefore the injectivity.


Let $f$ be a function defined on $\mathfrak{N}_{d}$. We show the surjectivity by showing that we can extend $f$ to an element in $\cA^{K}(\Phi_y, \lambda)$. We define
\[f(c_{dx}):=\frac{1}{q+1}(\lambda f(c_0)-\sum_{g \in \mathfrak{N}_{d}}m(c_{0},g)f(g)).\]
The above definition yields $\Phi_y(f)(c_{0})=\lambda f(c_{0})$. At this point $f$ is defined on $\cN_d$.
Next, for every $l \in \{1,...,d-1\}$, we define 
\[f(c_{(d+l)x}):=\lambda f(c_0)-\sum_{g \in \mathfrak{N}_{d}}m(c_{0},g)f(g),\]
which gives $\Phi_y(f)(c_{kx})=\lambda f(c_{kx})$.
If $n \geq 2d$ we just define by induction 
\[f(c_{nx}):=\lambda f(c_{(n-d)x})-qf(c_{(n-2d)x})\]
The condition of moderate growth follows from the definitions. Therefore, $f \in \cA^K$ and Proposition \ref{prop-edgesnonramified} yields that $f \in \cA^{K}(\Phi_y, \lambda)$, which concludes the proof.
\end{proof}


\begin{cor} Let $y$ be a degree $d$ place of $\P^1$ and $\lambda \in \C^{\times}$. Then $f \in \cA^{K}(\Phi_y, \lambda)$ is completely determined by $\mathfrak{N}_{d}$, the nucleus of $\cG_{\Phi_y,K}$. 
$\hfill \square$
\end{cor}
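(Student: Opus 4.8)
The plan is to read this off directly from Theorem~\ref{thm-mainunramified}. That theorem provides an isomorphism of $\C$-vector spaces
\[
\cA^{K}(\Phi_y, \lambda)\;\xrightarrow{\ \sim\ }\;\bigoplus_{\mathfrak{N}_d}\C,\qquad f\longmapsto\bigl(f(g)\bigr)_{g\in\mathfrak{N}_d},
\]
and in particular this evaluation map is injective. Injectivity is precisely the assertion of the corollary: if $f,f'\in\cA^{K}(\Phi_y,\lambda)$ satisfy $f|_{\mathfrak{N}_d}=f'|_{\mathfrak{N}_d}$, then $f=f'$; equivalently, an $\Phi_y$-eigenform is completely determined by its restriction to the nucleus. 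So strictly speaking there is nothing further to prove.

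If one prefers an argument that does not invoke the surjectivity half of Theorem~\ref{thm-mainunramified}, one can re-run only the inductive step from its proof. For $f\in\cA^{K}(\Phi_y,\lambda)$ and any $n\geq 0$, write $n=kd+l$ with $0\leq l\leq d-1$. By Proposition~\ref{prop-edgesnonramified}, all $\Phi_y$-neighbors of $c_{(kd+l)x}$ lie in $\cN_{kd+l}$ except for $c_{((k+1)d+l)x}$, which occurs with multiplicity $1$ (and, in the case $l=0$, $c_{dx}$ occurs with multiplicity $q+1$). Hence the eigenform equation $\Phi_y(f)(c_{(kd+l)x})=\lambda f(c_{(kd+l)x})$ solves for $f(c_{((k+1)d+l)x})$ in terms of the values of $f$ on $\cN_{kd+l}$. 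Starting from $f|_{\mathfrak{N}_d}=f|_{\cN_{d-1}}$ and inducting on $k$, every value $f(c_{nx})$ is forced, so $f$ is determined by $f|_{\mathfrak{N}_d}$.

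The only point requiring care — already dealt with inside the proof of Theorem~\ref{thm-mainunramified} — is that the multiplicities appearing in $\cG_{\Phi_y,K}$ are independent of the automorphic form, so the recursion above is identical for every eigenform. With that observation in hand there is no genuine obstacle; the corollary is a formal consequence of the theorem.
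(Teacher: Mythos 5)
Your proposal is correct and matches the paper exactly: the paper gives no separate argument for this corollary, treating it as the immediate consequence of the injectivity half of Theorem~\ref{thm-mainunramified}, which is precisely your first paragraph. Your optional re-derivation of the inductive step is consistent with the theorem's proof and adds nothing problematic.
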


We finish this section with the drawing graph for a place of degree $d=6$. See Figure \ref{fig4}. The graphs for $d=1,2,3,4,5$ are available in \cite{oliver-graphs}.


\begin{center}
\begin{figure}[h]
\centering
\begin{minipage}[h]{\linewidth}
\[
  \beginpgfgraphicnamed{tikz/fig4}
  \begin{tikzpicture}[>=latex, scale=2]
        \vertex[circle,fill, minimum size = 5pt, label={ [label distance=-0.1cm] left:\small $c_{0}$}](c0) at (0,1.25) {};

        \vertex[circle,fill, minimum size = 5pt, label={ [label distance=-0.1cm] left:  \small $c_{1x}$}](c1) at (.75,-1.25) {};

      \vertex[circle,fill,minimum size = 5pt, label={below:\small $c_{6x}$}](c6) at (2,2) {};
        \vertex[circle,fill,minimum size = 5pt, label={ [label distance=-0.15cm] below right:\small $c_{4x}$}](c4) at (2,1.25){};
    \vertex[circle,fill,minimum size = 5pt, label={below:\small $c_{2x}$}](c2) at (2,0.5) {};
     \vertex[circle,fill,minimum size = 5pt, label={below:\small $c_{7x}$}](c7) at (2,-.5) {};
    \vertex[circle,fill,minimum size = 5pt, label={below:\small $c_{5x}$}](c5) at (2,-1.25) {};
    \vertex[circle,fill,minimum size = 5pt, label={below:\small $c_{3x}$}](c3) at (2,-2) {};
    \vertex[circle,fill,minimum size = 5pt, label={below:\small $c_{12x}$}](c12) at (3.5,2) {};
        \vertex[circle,fill,minimum size = 5pt, label={below:\small $c_{10x}$}](c10) at (3.5,1.25){};
    \vertex[circle,fill,minimum size = 5pt, label={below:\small $c_{8x}$}]
    (c8) at (3.5,0.5) {};
     \vertex[circle,fill,minimum size = 5pt, label={below:\small $c_{13x}$}](c13) at (3.5,-.5) {};
    \vertex[circle,fill,minimum size = 5pt, label={below:\small $c_{11x}$}](c11) at (3.5,-1.25) {};
    \vertex[circle,fill,minimum size = 5pt, label={below:\small $c_{9x}$}](c9) at (3.5,-2) {};


  \path[-,font=\scriptsize]
   (c0) edge[-=0.8] node[pos=0.2, above]{\tiny $q+1$} 
    node[pos=0.8,above,black]{\tiny $q^6$}  (c6) ;
    
      \path[-,font=\scriptsize]
   (c0) edge[-=0.8] node[pos=0.35,above=-.1]{{ \tiny $q^3-q$}} 
    node[pos=0.8,above=-.1,black]{\tiny $q^6-q^5$}  (c4) ;
    
      \path[-,font=\scriptsize]
   (c0) edge[-=0.8] node[pos=0.2,auto,black, below=0.1]{{ \tiny $q^5-q^3$}} 
    node[pos=0.7,below=0.1,black]{\tiny $q^6- q^5$}  (c2) ;

    \draw[black] (-.25,1.25) circle  (0.25cm) node at (-.25,1.6) {\tiny $q^6-q^5$};

    \path[-,font=\scriptsize]
   (c1) edge[-=0.8] node[pos=0.2,above,black]{\tiny $1$} 
    node[pos=0.8,above,black]{\tiny $q^6$}  (c7) ;
  \path[-,font=\scriptsize]
   (c1) edge[-=0.8] node[pos=0.3, above=-.1]{\tiny $q^2$} 
    node[pos=0.8,above=-.1,black]{\tiny $q^6$}  (c5) ;
     \path[-,font=\scriptsize]
      (c1) edge[-=0.8] node[pos=0.2,below=0.15]{\tiny $q^4-q^2$} 
    node[pos=0.6,below=0.15,black]{\tiny $q^6-q^4$}  (c3) ;

        \draw[black] (.5,-1.25) circle  (0.25cm) node at (.5,-.9) {\tiny $q^6-q^4$};

   \path[-,font=\scriptsize]
   (c6) edge[-=0.8] node[pos=0.2,auto,black]{\tiny $1$} 
    node[pos=0.8,auto,black]{\tiny $q^6$}  (c12) ;

      \path[-,font=\scriptsize]
   (c4) edge[-=0.8] node[pos=0.2,auto,black]{\tiny $1$} 
    node[pos=0.8,auto,black]{\tiny $q^6$}  (c10) ;

          \path[-,font=\scriptsize]
   (c2) edge[-=0.8] node[pos=0.2,auto,black]{\tiny $1$} 
    node[pos=0.8,auto,black]{\tiny $q^6$}  (c8) ;

           \path[-,font=\scriptsize]
   (c2) edge[-=0.8] node[pos=0.22,left=-.1,black]{\tiny $q^3$} 
    node[pos=0.75,left=-.1,black]{\tiny $q^5$}  (c4) ;

    \draw[black] (2,0.25) circle  (0.25cm) node at (2,-.09) {\scalebox{0.65}{ $q^5-q^3$}};

      \path[-,font=\scriptsize]
   (c5) edge[-=0.8] node[pos=0.2,auto,black]{\tiny $1$} 
    node[pos=0.8,auto,black]{\tiny $q^6$}  (c11) ;
    
          \path[-,font=\scriptsize]
   (c3) edge[-=0.8] node[pos=0.2,auto,black]{\tiny $1$} 
    node[pos=0.8,auto,black]{\tiny $q^6$}  (c9) ;
    
     \path[-,font=\scriptsize]
   (c7) edge[-=0.8] node[pos=0.2,auto,black]{\tiny $1$} 
    node[pos=0.8,auto,black]{\tiny $q^6$}  (c13) ;

  \draw[black] (2,-2.25) circle  (0.25cm) node at (2,-2.6) {\tiny $q^4$};

   \path[-,font=\scriptsize]
   (c12) edge[-=0.8] node[pos=0.5,auto,black]{\tiny $1$}  (4,2) ;
   
      \path[-,font=\scriptsize]
   (c10) edge[-=0.8] node[pos=0.5,auto,black]{\tiny $1$} (4,1.25) ;
   
      \path[-,font=\scriptsize]
   (c8) edge[-=0.8] node[pos=0.5,auto,black]{\tiny $1$} (4,0.5) ;
   
      \path[-,font=\scriptsize]
   (c13) edge[-=0.8] node[pos=0.5,auto,black]{\tiny $1$} (4,-.5) ;
   
       \path[-,font=\scriptsize]
   (c11) edge[-=0.8] node[pos=0.5,auto,black]{\tiny $1$} (4,-1.25) ;
   
       \path[-,font=\scriptsize]
   (c9) edge[-=0.8] node[pos=0.5,auto,black]{\tiny $1$} (4,-2) ;
    
\node at (4.2,2) {$\ldots$};
\node at (4.2,1.25) {$\ldots$};
\node at (4.2,.5) {$\ldots$};

\node at (4.2,-.5) {$\ldots$};
\node at (4.2,-1.25) {$\ldots$};
\node at (4.2,-2) {$\ldots$};


\node[auto,below] at (0.75,-2.5) { $\mathfrak{N}_6$};

\draw[dashed] (-.7, 1.7) -- (-.7,-2.8); 
\draw[dashed] (2.6,1.7) -- (2.6, -2.8);
\draw[dashed]  (-.7, 1.7) -- (2.6,1.7);
\draw[dashed]  (-.7, -2.8) -- (2.6,-2.8);

   \end{tikzpicture}
 \endpgfgraphicnamed
\]
\end{minipage} 
 \caption{The graph $\cG_{\Phi_y,K}$ for $|y|=6$.}
  \label{fig4}
\end{figure}

\end{center}


\section{Ramified Hecke eigenspaces}
\label{sec-ramified}

As before, we fix $x \in |\P^1|$ a place of degree~$1$.
Let us consider, through this section,
\[ K' := K_x'\times \prod_{y\neq x}K_y \quad \text{ such that } \quad K_x':=\big\{ k \in K_x \;|\; k \equiv \id~\textrm{mod}(\pi_x)\big\}.\]
The goal is to generalize the previous section to the ramified case attached to $K' \in \cV$. In the next section we investigate the cases where the degree of $x$ is bigger than $1$. 


\subsection*{Representation of the double coset}
The first step to describe the action of operators on $\cH_{K'}$ over $\cA^{K'}$ is to find a system of representatives of $G(F) \setminus G(\mathbb{A})/Z K'$. In order to do that, we consider the canonical projection 
\[ P : G(F) \setminus G(\mathbb{A})/Z K' \longrightarrow G(F) \setminus G(\mathbb{A})/Z K .\]
and observe that it is sufficient to find a system of representatives for every fiber $P^{-1}(c_{nx}),~n \in \N.$
For this purpose, we generalize the Strong Approximation for $\mathrm{SL}_2$ given on previous section in the following proposition. 

\begin{rem} Even if there are now two cosets, we will keep denoting just by $g$ for the class of an element $ g\in G(\A)$ in both of them, with the context being clear every time. Moreover, we keep denoting $g \sim g'$ whether
$g, g' \in G(\A)$ are equivalent in $G(F) \setminus G(\mathbb{A})/Z K'.$
\end{rem}

\begin{prop} \label{prop-ramifstrongaprox} Let $x \in |\P^1|$ of degree $1$ and $\Gamma_x=\bigcap_{y\neq x} (G(F) \cap K_y ) =\GL_2(\FF_q[\pi_x^{-1}]) $.
Then the map
\[ (.)_x : \Gamma_x \setminus G_x/Z_xK_x' \longrightarrow G(F) \setminus G(\mathbb{A})/Z K'\]
which is defined by completing the adele with the identity matrix in the places different
from $x$, is well-defined and injective.
\end{prop}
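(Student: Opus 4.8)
The plan is to mimic the classical Strong Approximation argument for $\mathrm{SL}_2$ already used in the unramified case, adapting it to the presence of the level structure $K_x'$ at $x$. Concretely, I would factor the map $(.)_x$ through the unramified one: the composite $\Gamma_x\setminus G_x/Z_xK_x' \to G(F)\setminus G(\A)/ZK' \xrightarrow{P} G(F)\setminus G(\A)/ZK$ sends a class to its image under $\Gamma_x\setminus G_x/Z_xK_x' \twoheadrightarrow \Gamma_x\setminus G_x/Z_xK_x$, which we already know (from the unramified Strong Approximation cited in the previous section) is a bijection onto $\{c_{nx}\}_{n\ge 0}$. So it suffices to check (a) well-definedness, (b) that $(.)_x$ is compatible with these projections, and (c) injectivity on each fibre.

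For well-definedness and compatibility, I would just unwind definitions: if $g\in G_x$ and $\gamma\in\Gamma_x$, $z\in Z_x$, $k\in K_x'$, then $(\gamma g z k)_x = \gamma\,(g)_x\,z\,k$ as adelic matrices, where $\gamma\in G(F)$, $z\in Z$, and $k\in K'$ because $K_x'\subset K_x$ and $k$ is completed by identities at the other places (and $\id\in K_y'$ trivially for $y\ne x$ since $K_y'=K_y$). Hence the class is well-defined, and the diagram with $P$ and the projections commutes by construction.

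The heart of the matter — and the main obstacle — is injectivity, which by the factorization reduces to injectivity of $(.)_x$ on each fibre $P^{-1}(c_{nx})$. Suppose $(g)_x\sim(g')_x$ in $G(F)\setminus G(\A)/ZK'$, say $(g')_x = \alpha\,(g)_x\,z\,k$ with $\alpha\in G(F)$, $z\in Z$, $k\in K'$. Comparing components at every place $y\ne x$: the $y$-component of $(g)_x$ and $(g')_x$ is the identity, so $\alpha_y z_y k_y = \id$, i.e.\ $\alpha_y \in Z_y K_y$ for all $y\ne x$; thus $\alpha\in Z\cdot\Gamma_x$ up to the central factor, more precisely $\det\alpha$ lies in the appropriate group and $\alpha$ itself, after dividing by a global central element matching $z$, lies in $\Gamma_x = \GL_2(\FF_q[\pi_x^{-1}])$ (this is exactly the standard "elements of $G(F)$ integral away from one point" computation, using that $\P^1\setminus\{x\}$ has coordinate ring $\FF_q[\pi_x^{-1}]$ and that the only global functions regular everywhere are constants, so $\det$ is forced into $\FF_q^\times\cdot\pi_x^{\Z}$ which is absorbed by $Z_x$). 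Then at the place $x$ we get $g' = \alpha\, g\, z_x\, k_x$ with $\alpha\in\Gamma_x$, $z_x\in Z_x$, $k_x\in K_x'$, i.e.\ $g\sim g'$ in $\Gamma_x\setminus G_x/Z_xK_x'$. I expect the one genuinely delicate point to be pinning down $\alpha$ precisely in $\Gamma_x$ (as opposed to merely in $\Gamma_x Z$) and tracking the central element $z$ consistently across all places; this is the same subtlety that appears in the $\mathrm{SL}_2$ version, resolved by the fact that $\Pic(\P^1)=\Z$ is generated by $x$ together with $\FF_q^\times$ being the full unit group, so every relevant determinant class has a representative in $Z_x K_x$. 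Note that surjectivity is \emph{not} claimed here (the fibres $P^{-1}(c_{nx})$ will genuinely be larger than a single class once $|x|\ge 1$ interacts with the level), which is why the statement only asserts injectivity; this is consistent with the intended application where one must separately enumerate each fibre.
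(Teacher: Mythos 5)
Your proposal is correct and follows essentially the same route as the paper: comparing components at every place $y\neq x$ forces the rational matrix into $\bigcap_{y\neq x}K_y$, i.e.\ into $\Gamma_x$, and the central element is handled via class number one of $\P^1$ (the paper packages this as a preliminary reduction of $Z$ to $Z_x$ before the componentwise step, whereas you normalize $\alpha$ by a global scalar afterwards --- the same fact about $\A^\times$ for $\Fq(t)$ either way). The only slip is the displayed identity $(\gamma g z k)_x=\gamma\,(g)_x\,z\,k$, which fails at the places $y\neq x$ unless you insert $\gamma^{-1}\in K_y=K_y'$ into the $K'$-factor there, exactly as the paper's proof does.
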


\begin{proof}
    We give here an elementary proof that works exactly the same way for $K=K'$.
    First we prove that 
    \[  G(F) \setminus G(\mathbb{A})/Z K'= G(F) \setminus G(\mathbb{A})/Z_{x}K'.\]
    It is sufficient to show that if $h=z_xgh'k'\in Z_xG(F)h'K'$, we still have $zh\in Z_xG(F)h'K'$, for every $z \in Z$. By changing $z_x$ we can suppose $z$ is trivial at $x$. By changing the $k'$, we can suppose that $z$ is trivial everywhere except on a finite number of places, where it is  $1/\rho(\pi)$, for $\pi$ the uniformizer of the corresponding place and $\rho$ a polynomial on it. Thus, by multiplying $g$  by the finite product of the value of $z$ on places where it is non-trivial, we can finally suppose that $z$ is trivial everywhere, which is the desired.

    Next, we prove the desired identity 
    \[ G(F) \setminus G(\mathbb{A})/Z_{x}K'=\Gamma_x \setminus G_x/Z_xK_x'.\]
    Suppose that we have $a=z\gamma bk_x'$ with $a,b \in G_x$, $\gamma \in \Gamma_x$, $k_x' \in K_x'$. We define $k' \in K'$ such that $k'$ is equal to $\gamma^{-1}$ on every place $y\neq x$, and equal to $k_x'$ at $x$. Then we have $(a)_x=(z)_x\gamma (b)_xk'$ with $\gamma \in G(F)$ and $k' \in K'$. Reciprocally, suppose now that we have $(a)_x=(z)_xg(b)_xk'$ with $a,b \in G_x$, $g \in G(F)$ and $k' \in K'$. This relation, projected on the place $x$, gives $a=zgbk_x'$, and projected on every other place $y$, gives $gk_y'=\id$. Then we deduce $g \in \bigcap_{y\neq x}K_y$, which shows that $g \in \Gamma_x$ and concludes the proof.
\end{proof}

\begin{lemma}\label{lemma3.3} Let $P : G(F) \setminus G(\mathbb{A})/Z K' \longrightarrow G(F) \setminus G(\mathbb{A})/Z K $ be the canonical projection. Then 
$P^{-1}(c_{nx})=\big\{(p_{nx}g)_x \;|\; g\in \GL_2(\FF_q)\big\}$.
\end{lemma}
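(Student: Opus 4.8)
The plan is to prove the two inclusions separately, working throughout through the bijections/injection of Proposition~\ref{prop-ramifstrongaprox} and of the $K$-level Strong Approximation recalled earlier. Under the identification $g\in G_x\mapsto (g)_x\in G(\A)$, the projection $P$ becomes the obvious map $\Gamma_x\setminus G_x/Z_xK_x'\to\Gamma_x\setminus G_x/Z_xK_x$ induced by $K_x'\subset K_x$, and $c_{nx}$ is the class of $p_{nx}$. Two elementary facts will be used: first, $\{p_{nx}\mid n\in\N\}$ represents $G(F)\setminus G(\A)/ZK$; second, since $|x|=1$ we have $\kappa(x)\cong\FF_q$, the reduction map $K_x=\GL_2(\cO_x)\twoheadrightarrow\GL_2(\FF_q)$ is surjective with kernel $K_x'$, and the entrywise inclusion $\FF_q\subset\cO_x$ gives a section $\GL_2(\FF_q)\hookrightarrow K_x$.

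\textbf{The inclusion $\supseteq$.} This is the easy direction: if $g\in\GL_2(\FF_q)$, regard it as an element of $K_x$ via the section above, so the adele $(g)_x$ lies in $K$. Then $(p_{nx}g)_x=p_{nx}\,(g)_x$, and modulo $ZK$ on the right this is $\sim p_{nx}$; hence $P\big([(p_{nx}g)_x]_{K'}\big)=[(p_{nx}g)_x]_K=c_{nx}$, so $[(p_{nx}g)_x]_{K'}\in P^{-1}(c_{nx})$.

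\textbf{The inclusion $\subseteq$.} Take $a\in G(\A)$ with $[a]_{K'}\in P^{-1}(c_{nx})$, i.e.\ $[a]_K=c_{nx}$, so that $a=\gamma\,p_{nx}\,z\,k$ with $\gamma\in G(F)$, $z\in Z$, $k\in K$. Absorb $\gamma$ on the left. Since $z$ is a scalar adele it is central, so $p_{nx}zk=p_{nx}kz=(p_{nx}k)z$ with $z\in Z\subseteq ZK'$, and it is absorbed on the right; thus $[a]_{K'}=[p_{nx}k]_{K'}$. Now decompose $k=(k_x)_x\cdot k^x$ with $k^x:=\prod_{y\ne x}k_y\in\prod_{y\ne x}K_y=\prod_{y\ne x}K_y'\subset K'$, absorbed on the right, leaving $[a]_{K'}=[(p_{nx}k_x)_x]_{K'}$. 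Finally write $k_x=g\,k_x'$ where $g\in\GL_2(\FF_q)$ reduces to $k_x\bmod\pi_x$ and $k_x':=g^{-1}k_x\in K_x'$; since $(p_{nx}k_x)_x=(p_{nx}g)_x\,(k_x')_x$ with $(k_x')_x\in K_x'\subset K'$, we get $[a]_{K'}=[(p_{nx}g)_x]_{K'}$ with $g\in\GL_2(\FF_q)$, as required.

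\textbf{Main obstacle.} There is no deep point here; the care needed is purely bookkeeping with the three different right-quotients ($Z$ versus $Z_x$, and $K$ versus $K'$) and the fact that $p_{nx}$ normalizes neither $K_x$ nor $K_x'$. In particular the factorization $k_x=g\,k_x'$ must be performed on $k_x$ itself and only afterwards multiplied by $p_{nx}$ on the left --- one cannot reduce $p_{nx}k_x$ modulo $K_x'$ directly. I would also point out explicitly that the statement does not claim the classes $[(p_{nx}g)_x]_{K'}$ for $g\in\GL_2(\FF_q)$ are pairwise distinct (they are in general not, as $\GL_2(\FF_q)$ acts on the fiber with nontrivial stabilizers), so no injectivity is asserted or needed at this stage.
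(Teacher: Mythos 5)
Your proof is correct and follows essentially the same route as the paper: the paper's proof simply introduces the reduction map $\rho_x : K_x \to \GL_2(\FF_q)$, notes that $K_x'$ is its kernel, and deduces the coset decomposition $K_x = \coprod_{g \in \GL_2(\FF_q)} g K_x'$, from which the fiber description follows. Your version just makes explicit the bookkeeping (absorbing $\gamma$, $z$, and the components away from $x$) that the paper leaves implicit, and your closing remark that no injectivity is claimed is consistent with the paper, which addresses distinctness of these classes only afterwards in Proposition~\ref{prop-ramifequiv}.
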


\begin{proof}
We consider $\rho_x : K_x \rightarrow \GL_2(\FF_q)$ the morphism of reduction mod($\pi_x$), pre-composed with the projection from $K$ to $K_x$. We observe that $K_x'$ is the kernel of $\rho_x$, which justify that $K_x'$ is of finite index in $K_x$, and give the following decomposition 
\[K_x=\coprod_{g \in \GL_2(\FF_q)}g K_x'.\]
Therefore $P^{-1}(c_{nx})=\big\{(p_{nx}g)_x \;|\; g\in \GL_2(\FF_q)\big\}$.
\end{proof}

\begin{prop} \label{prop-ramifequiv}
Let $n \in \N^{\times}$ and $g,g'\in \GL_2(\FF_q)$. Then
\[p_{nx}g\sim p_{nx}g' \text{ in } \Gamma_x \setminus G_x/Z_xK_x'
\; \text{ if and only if } \; p_{nx}g\sim p_{nx}g' \text{ in } \Gamma_x \setminus G_x.\]
\end{prop}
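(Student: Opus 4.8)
The plan is to treat the two implications separately, since the forward one is formal while the reverse one carries all the content. Throughout write $F_x = \Fq((\pi_x))$, $\cO_x = \Fq[[\pi_x]]$, let $v$ denote the valuation on $F_x$, and recall $\Gamma_x = \GL_2(\Fq[\pi_x^{-1}])$. For any $h\in\GL_2(\Fq)$ a direct conjugation gives
\[ p_{nx}\,h\,p_{nx}^{-1} = \begin{pmatrix} h_{11} & \pi_x^{-n}h_{12} \\ \pi_x^{n}h_{21} & h_{22}\end{pmatrix},\]
whose determinant is $\det h\in\Fq^{\times}$, always a unit; since $\pi_x^{n}\notin\Fq[\pi_x^{-1}]$ for $n\geq 1$, this matrix lies in $\Gamma_x$ if and only if $h_{21}=0$. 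Applying this with $h=g'g^{-1}$ shows that equivalence in $\Gamma_x\setminus G_x$ (that is, $p_{nx}g'(p_{nx}g)^{-1}\in\Gamma_x$) is equivalent to the purely finite-group condition that $g'g^{-1}$ be upper triangular. The implication from $\Gamma_x\setminus G_x$ to the coarser $\Gamma_x\setminus G_x/Z_xK_x'$ is then immediate, taking the central and level factors trivial; the real content is the reverse implication, that equivalence in $\Gamma_x\setminus G_x/Z_xK_x'$ already forces equivalence in $\Gamma_x\setminus G_x$.

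For that reverse implication I would start from a relation $p_{nx}g' = \gamma\, p_{nx} g\, z\, k$ with $\gamma\in\Gamma_x$, $z=\lambda I\in Z_x$, $k\in K_x'$. Multiplying on the left by $(p_{nx}g)^{-1}=g^{-1}p_{nx}^{-1}$ and writing $\delta := \gamma^{-1}=\big(\begin{smallmatrix} a & b\\ c & d\end{smallmatrix}\big)\in\Gamma_x$ isolates
\[ g^{-1} M g' = zk, \qquad M := p_{nx}^{-1}\delta\, p_{nx} = \begin{pmatrix} a & \pi_x^{n} b \\ \pi_x^{-n} c & d\end{pmatrix},\]
with $a,b,c,d\in\Fq[\pi_x^{-1}]$. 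The first key step is to pin down $v(\lambda)$: since $\det M=\det\delta\in\Fq^{\times}$ has valuation $0$ while $\det(zk)=\lambda^{2}\det k$ has valuation $2\,v(\lambda)$, and $g,g'$ are constant of unit determinant, comparing valuations across $g^{-1}Mg'=zk$ forces $v(\lambda)=0$. Hence $\lambda\in\cO_x^{\times}$, so $zk\in K_x$ and reduces mod $\pi_x$ to the scalar $\bar\lambda I$.

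With $\lambda$ a unit, $M=g(zk)(g')^{-1}\in K_x$, so every entry of $M$ lies in $\cO_x$. The decisive entry is the $(2,1)$ one: $\pi_x^{-n}c\in\cO_x$ forces $v(c)\geq n\geq 1$, while $c\in\Fq[\pi_x^{-1}]$ has $v(c)\leq 0$, whence $c=0$. Thus $\delta$, and therefore $M$, is upper triangular. Reducing $M=g(zk)(g')^{-1}$ modulo $\pi_x$ gives $\overline{M}=\bar\lambda\, g(g')^{-1}$; as $\overline M$ is upper triangular and $\bar\lambda$ is a central scalar, $g(g')^{-1}$ is upper triangular, equivalently $g'g^{-1}$ is upper triangular. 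By the first paragraph this is exactly equivalence in $\Gamma_x\setminus G_x$, closing the argument.

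The main obstacle is controlling the central factor $z=\lambda I$: without knowing that $\lambda$ is a unit one cannot reduce $M$ mod $\pi_x$ cleanly and the $(2,1)$-entry argument breaks down. The determinant computation is precisely what removes this obstruction, and it is also where the hypothesis $n\in\N^{\times}$ enters decisively — both in the final killing of $c$ and, earlier, in characterizing $\Gamma_x\setminus G_x$-equivalence via upper triangularity — which is why the statement excludes $n=0$.
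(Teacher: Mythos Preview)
Your proof is correct and rests on the same two ingredients as the paper's: a determinant computation to force the central factor to be a unit, and an analysis of the $(2,1)$-entry exploiting $\Fq[\pi_x^{-1}]\cap\pi_x\cO_x=\{0\}$. The paper separates these into two independent steps --- first showing that equivalence in $Z_x\Gamma_x\setminus G_x$ already implies equivalence in $\Gamma_x\setminus G_x$ (the determinant forces $z\in\Fq^\times$, so $z\gamma\in\Gamma_x$), and then that equivalence in $\Gamma_x\setminus G_x/K_x'$ implies equivalence in $\Gamma_x\setminus G_x$ (writing $k'=\id+\pi_x k''$ and reading off the $(2,1)$-entry). You instead run a single pass: rewriting the relation as $g^{-1}Mg'=zk$ with $M=p_{nx}^{-1}\delta\,p_{nx}$, pinning down $v(\lambda)=0$ via the determinant, then using $M\in K_x$ to kill $c$, and finally reducing $\bmod\ \pi_x$ to read off upper-triangularity of $g(g')^{-1}$. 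Your organization is a bit more streamlined and handles $Z_x$ and $K_x'$ simultaneously; the paper's has the expository advantage of isolating the two obstructions, which it later reuses in the generalization to $K^{(x,r)}$. The mathematical content is the same.
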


\begin{proof} We proceed the proof in two steps:
\begin{enumerate}[(i)]
    \item $p_{nx}g\sim p_{nx}g' \text{ in } \Gamma_x \setminus G_x/K_x'$ if, and only if, $p_{nx}g\sim p_{nx}g' \text{ in }\Gamma_x \setminus G_x.$ \medskip
    
    \item $p_{nx}g\sim p_{nx}g' \text{ in } Z_x\Gamma_x \setminus G_x$ if, and only if,  $p_{nx}g\sim p_{nx}g' \text{ in }\Gamma_x \setminus G_x.$
\end{enumerate}
For $\textrm{(i)}$, let $k' \in K_x'$ such that $ p_{nx}gk'(g')^{-1}p_{nx}^{-1} \in \Gamma_x.$
By using the embedding 
\[\GL_2(\FF_q[[\pi_x]]) \hookrightarrow (M_2(\FF_q))[[\pi_x]],\]
we can write 
$k'=\id+\pi_xk'' \text{ with } k'' \in (M_2(\FF_q))[[\pi_x]]$,
where $M_2$ is the group of $2 \times 2$ matrices. Thus, \[p_{nx}g(g')^{-1}p_{nx}^{-1}+\pi_xp_{nx}gk''(g')^{-1}p_{nx}^{-1} \in \Gamma_x.\] 
We denote $a \in \Fq$ and $\ b(\pi_x), c(\pi_x) \in \Fq[[\pi_x]]$  the coefficients in the position $(2,1)$ of the matrices $g(g')^{-1}$, $gk'(g')^{-1}$ and $gk''(g')^{-1}$, respectively. Since $\Gamma_x=\GL_2(\FF_q[\pi_x^{-1}])$ embeds on $(M_2(\FF_q))[\pi_x^{-1}]$,
$b(\pi_x)=0$. But as above, $b(\pi_x)= a +\pi_x c(\pi_x)$, then $a=0$ and  $p_{nx}g(g')^{-1}p_{nx}^{-1} \in \Gamma_x$.

Next we prove $\textrm{(ii)}$. If $p_{nx}g=z\gamma p_{nx}g'$, with $g,g'\in \GL_2(\FF_q)$, $\gamma \in \Gamma_x$ and $z \in F_x$, then by taking the determinant we see that $z^2 \in \FF_q^\times$. Thus $z\gamma \in \Gamma_x$, which concludes the proof.
\end{proof}

Next we can find a necessary and sufficient condition for matrices of the form $p_ng$, with $g \in \GL_2(\FF_q)$, be equivalents in  $G(F) \setminus G(\mathbb{A})/Z K'$. 
Let 
\[ g:=\begin{pmatrix} a&b \\ c&d \end{pmatrix} \in \GL_2(\FF_q), \;
g':=\begin{pmatrix} a'&b' \\ c'&d' \end{pmatrix} \in \GL_2(\FF_q)\] 
and define
\[ h := p_{nx}g(g')^{-1}p_{nx}^{-1}=(a'd'-b'c')^{-1}\begin{pmatrix} ad'-bc'&(ab'-a'b)\pi_x^{-n} \\ (cd'-c'd)\pi_x^{n}&-cb'+da' \end{pmatrix}.\]
If $n \in \N^{\times}$, $h \in \Gamma_x$ if, and only if, $c'd=cd'$. Hence, thanks to Propositions \ref{prop-ramifstrongaprox} and \ref{prop-ramifequiv}, we have a system of representatives for $G(F) \setminus G(\mathbb{A})/ZK'$. Indeed, if we define
\[ \vartheta_{[0:1]}~:=~\begin{pmatrix} &1 \\ 1& \end{pmatrix} \; \text{ and }
\; \vartheta_{[1:a]}:=\begin{pmatrix} 1& \\ a&1 \end{pmatrix} \forall a \in \FF_q, \]
then for $n \in \N^{\times}$, $\big\{ (p_{nx}\vartheta_w)_x \;|\; w \in \P^1(\FF_q)\}$ is a system of representatives of $P^{-1}(c_{nx})$.
Since $p_0=\id \in K'$, $P^{-1}(c_{0})=\big\{ (p_0)_x \big\}.$
Summarizing the above discussion, we have the following theorem.

\begin{thm} \label{thm-ramifiedvertices} The set
    $\big\{\id\big\}\cup \big\{(p_{nx}\vartheta_w)_x \;|\; n\in \N^{\times},~w\in\P^1(\FF_q)\big\}$ is a system of representatives for $G(F) \setminus G(\mathbb{A})/Z K'$.
\end{thm}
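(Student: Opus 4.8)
The strategy is to assemble the previous results into a clean counting argument. By Lemma \ref{lemma3.3}, the fibre $P^{-1}(c_{nx})$ is the image under $(.)_x$ of $\{p_{nx}g \mid g\in \GL_2(\FF_q)\}$, and by Proposition \ref{prop-ramifstrongaprox} this map $(.)_x$ is injective, so I only need to decide, for each $n$, which classes $p_{nx}g$ and $p_{nx}g'$ coincide in $\Gamma_x\setminus G_x/Z_xK_x'$. Proposition \ref{prop-ramifequiv} reduces this (for $n\geq 1$) to equivalence in $\Gamma_x\setminus G_x$, i.e.\ to the condition $h:=p_{nx}g(g')^{-1}p_{nx}^{-1}\in\Gamma_x$. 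The first step is therefore to record the explicit formula for $h$ displayed just before the statement and observe that, since $n\geq 1$, the only obstruction to $h$ having entries in $\FF_q[\pi_x^{-1}]$ is the lower-left entry $(cd'-c'd)\pi_x^n$, which lies in $\FF_q[\pi_x^{-1}]$ exactly when $cd'=c'd$; this is the claim $h\in\Gamma_x \iff c'd=cd'$.

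The second step is to translate the relation $cd'=c'd$ (for $g,g'\in\GL_2(\FF_q)$) into a statement about $\P^1(\FF_q)$. The condition $cd'=c'd$ says precisely that the rows $(c,d)$ and $(c',d')$ — the bottom rows of $g$ and $g'$ — are proportional, i.e.\ they determine the same point $[c:d]\in\P^1(\FF_q)$. Hence the classes $p_{nx}g$ for $g\in\GL_2(\FF_q)$ are in bijection with $\P^1(\FF_q)$ via $g\mapsto [c:d]$, and the representatives $\vartheta_w$ defined before the statement are simply chosen so that the bottom row of $\vartheta_{[0:1]}$ is $(1,0)$ and the bottom row of $\vartheta_{[1:a]}$ is $(a,1)$, exhausting all of $\P^1(\FF_q)$. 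I would also double-check that each $\vartheta_w$ is genuinely in $\GL_2(\FF_q)$ (determinant $-1$ and $1$ respectively) so that these are legitimate fibre elements. This gives that $\{(p_{nx}\vartheta_w)_x \mid w\in\P^1(\FF_q)\}$ is a complete, irredundant system of representatives for $P^{-1}(c_{nx})$ when $n\geq 1$.

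The third step handles $n=0$ separately: since $p_0=\id\in K'$, every $p_0 g$ with $g\in\GL_2(\FF_q)\subset K_x$ lies in the single class of $\id$, so $P^{-1}(c_0)=\{\id\}$. Finally, combining the description of the base $G(F)\setminus G(\A)/ZK = \{c_{nx}\}_{n\geq 0}$ (recalled from \cite{oliver-graphs}) with the fibre descriptions over each $c_{nx}$, and using that $P$ is surjective, yields that $\{\id\}\cup\{(p_{nx}\vartheta_w)_x \mid n\in\N^\times,\ w\in\P^1(\FF_q)\}$ is a system of representatives for $G(F)\setminus G(\A)/ZK'$, as claimed.

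\textbf{Main obstacle.} Most of the work is already packaged in the cited propositions, so the proof is essentially bookkeeping; the one point requiring genuine care is verifying the equivalence $h\in\Gamma_x \iff c'd=cd'$ from the matrix formula for $h$ — specifically, arguing that when $n\geq 1$ the entry $(ad'-bc')$ and the others automatically lie in $\FF_q\subset\FF_q[\pi_x^{-1}]$ while the $\pi_x^n$ in the lower-left corner forces that coefficient to vanish, and conversely that $cd'=c'd$ indeed suffices (there is no hidden constraint from requiring $h$ to be invertible over $\FF_q[\pi_x^{-1}]$, since $\det h\in\FF_q^\times$ automatically). I would present this as a short explicit computation rather than appealing to a black box.
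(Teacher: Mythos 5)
Your proposal is correct and follows essentially the same route as the paper: the authors likewise combine Lemma \ref{lemma3.3}, Proposition \ref{prop-ramifstrongaprox} and Proposition \ref{prop-ramifequiv}, compute the matrix $h=p_{nx}g(g')^{-1}p_{nx}^{-1}$ explicitly to obtain the criterion $c'd=cd'$ for $n\geq 1$, identify the resulting classes with $\P^1(\FF_q)$ via the bottom rows, and dispose of the fibre over $c_0$ by noting $p_0=\id\in K'$. The only difference is that you spell out the verification of $h\in\Gamma_x \iff c'd=cd'$ (including the determinant point), which the paper asserts without detail.
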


\begin{df}
    Let $c_0'$  stand for the classe of $(p_0)_x$ in $G(F) \setminus G(\mathbb{A})/Z K'$. Moreover, for every $n \in \N^{\times}$ and every $w \in \P^1(\FF_q)$, $ c_{nx,w}'$ stands for the class of $(p_{nx}\vartheta_w)_x$ in $G(F) \setminus G(\mathbb{A})/Z K'$.
\end{df}


\subsection*{Graphs of ramified Hecke operators.}

For every $\Phi \in \cH_{K'}$, accordingly Definition \ref{defgraphs}, the set of the vertices of its graph $\cG_{\Phi,K'} $ is $G(F) \setminus G(\mathbb{A})/ZK'$. Hence, by Theorem \ref{thm-ramifiedvertices}
\[ \mathrm{Vert}\; \cG_{\Phi,K'} = \big\{c_0'\big\}\cup \big\{c_{nx,w}' \;|\; n\in \N^{\times} \text{ and } w\in\P^1(\FF_q)\big\}. \]

\begin{df} For every place $y$ of $\P^1$, we define $\Phi_y' \in \cH_{K'}$ be the characteristic function of
\[ K' \begin{pmatrix} \pi_y& \\ &1 \end{pmatrix} K'\]
divided by the volume of $K'$. 
\end{df}

We recall that we see { \tiny $\begin{pmatrix} \pi_y& \\ &1 \end{pmatrix}$ } as an adelic matrix in $G(\A)$ by completing the adele with the identity matrix at all places different of $y$.

\begin{rem} We observe that there are others ramified Hecke operators. However, based on Satake theorem for the unramified Hecke algebra, the above ramified Hecke operator seems to be a natural choice to be investigated.
\end{rem}

To investigate the graphs of the $\Phi_y'$, we need a better understanding of $K' { \tiny \begin{pmatrix} \pi_y& \\ &1 \end{pmatrix} } K'$, which is done in the next proposition. Before, we need to introduce the following notation. Let $y$ be a place of $\P^1$, we define 
 $\xi_{y,w}$ for $w \in \P^1(\kappa(y))$ as follows: 
\[ \xi_{y,[0:1]}:=\begin{pmatrix} 1& \\ &\pi_y \end{pmatrix}\; \text{ and } \; \xi_{y,[1:a]}:=\begin{pmatrix} \pi_y&a \\ &1 \end{pmatrix} \]
for all $a \in \kappa(y)$. 

\begin{prop} \label{propdecomp}Considering above notation, then
    \begin{enumerate}[(i)]
        \item $K' p_y^{-1}  K'=\coprod_{w\in \P^1(\kappa(y))}\xi_{y,w} K'$, for every place $y\neq x$. \medskip
        
        \item $K'p_x^{-1}  K'=\coprod_{a \in \FF_q}\begin{pmatrix} \pi_x&a\pi_x \\ &1 \end{pmatrix} K'.$
    \end{enumerate}
\end{prop}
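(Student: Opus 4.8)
The plan is to handle the two parts separately, each time reducing the double coset to a computation at a single place of $\P^1$. For part (i), with $y\neq x$: the adelic matrix $p_y^{-1}=\bigl(\begin{smallmatrix}\pi_y&0\\0&1\end{smallmatrix}\bigr)$ is the identity at every place other than $y$, and the local component of $K'$ at $y$ is the full group $K_y$, so $K'p_y^{-1}K'$ decomposes as a product over the places of $\P^1$ whose only nontrivial factor is the local double coset $K_y\bigl(\begin{smallmatrix}\pi_y&0\\0&1\end{smallmatrix}\bigr)K_y$; consequently the right $K'$-cosets inside $K'p_y^{-1}K'$ correspond bijectively to the right $K_y$-cosets inside that local double coset. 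I would then invoke the classical elementary-divisor decomposition for $\GL_2$ over $F_y$ (equivalently, the count of colength-one $\cO_y$-sublattices of $\cO_y^2$, which are the preimages of the lines of $\kappa(y)^2$ and hence number $q_y+1$): this local double coset breaks into $q_y+1$ right $K_y$-cosets, for which the $\xi_{y,w}$, $w\in\P^1(\kappa(y))$, are a system of representatives, since $\xi_{y,[1:a]}=\bigl(\begin{smallmatrix}1&a\\0&1\end{smallmatrix}\bigr)\bigl(\begin{smallmatrix}\pi_y&0\\0&1\end{smallmatrix}\bigr)$ and $\xi_{y,[0:1]}=\bigl(\begin{smallmatrix}0&1\\1&0\end{smallmatrix}\bigr)\bigl(\begin{smallmatrix}\pi_y&0\\0&1\end{smallmatrix}\bigr)\bigl(\begin{smallmatrix}0&1\\1&0\end{smallmatrix}\bigr)$. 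Disjointness of the $\xi_{y,w}K'$ then reduces to the routine check that $\xi_{y,w'}^{-1}\xi_{y,w}\notin K_y$ when $w\neq w'$, visible already after reduction modulo $\pi_y$.

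For part (ii), with $y=x$, everything happens at $x$, the other local components contributing trivially; set $\alpha:=\bigl(\begin{smallmatrix}\pi_x&0\\0&1\end{smallmatrix}\bigr)=p_x^{-1}$. I would argue in three steps. First, containment of the proposed representatives: from $\bigl(\begin{smallmatrix}\pi_x&a\pi_x\\0&1\end{smallmatrix}\bigr)=\bigl(\begin{smallmatrix}1&a\pi_x\\0&1\end{smallmatrix}\bigr)\alpha$ and $\bigl(\begin{smallmatrix}1&a\pi_x\\0&1\end{smallmatrix}\bigr)\equiv\id\pmod{\pi_x}$, each $\bigl(\begin{smallmatrix}\pi_x&a\pi_x\\0&1\end{smallmatrix}\bigr)$ lies in $K_x'\alpha\subseteq K_x'\alpha K_x'$. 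Second, disjointness: for $a,a'\in\FF_q$ one computes $\bigl(\begin{smallmatrix}\pi_x&a'\pi_x\\0&1\end{smallmatrix}\bigr)^{-1}\bigl(\begin{smallmatrix}\pi_x&a\pi_x\\0&1\end{smallmatrix}\bigr)=\bigl(\begin{smallmatrix}1&a-a'\\0&1\end{smallmatrix}\bigr)$, which lies in $K_x'$ precisely when $a=a'$ in $\FF_q$, so the $q$ cosets are pairwise distinct. Third, exhaustion: the number of right $K_x'$-cosets in $K_x'\alpha K_x'$ equals $[K_x':K_x'\cap\alpha K_x'\alpha^{-1}]$; writing $K_x'=\id+\pi_x M_2(\cO_x)$ and conjugating by $\alpha$ one finds that $K_x'\cap\alpha K_x'\alpha^{-1}$ is exactly the kernel of the surjective homomorphism $K_x'\to(\kappa(x),+)$ that reads off the $\pi_x$-coefficient of the upper-right entry, so this index equals $|\kappa(x)|=q$; since $q$ distinct cosets have already been exhibited, they exhaust the double coset.

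The step I expect to be the main obstacle is exhaustion in (ii): one must explain why imposing the level structure at $x$ removes exactly one of the $q+1$ cosets present in the unramified situation. Conjugation by $\alpha$ multiplies the $(1,2)$-entry by an extra factor $\pi_x$ while dividing the $(2,1)$-entry by $\pi_x$, and it is precisely this asymmetry—keeping track of which power of $\pi_x$ each entry is constrained to after intersecting with $K_x'$—that produces the count $q$ in place of $q+1$; getting this bookkeeping right is the delicate point. Alternatively, one can run a hands-on argument for exhaustion: any $g=k_1\alpha k_2$ with $k_1,k_2\in K_x'$ reduces to $\bigl(\begin{smallmatrix}0&0\\0&1\end{smallmatrix}\bigr)$ modulo $\pi_x$ and has $\det g\in\pi_x\cO_x^\times$, and these two facts together force $g$ into the stated shape after right multiplication by a suitable element of $K_x'$.
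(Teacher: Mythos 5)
Your proof is correct, and for part (i) it follows the same route as the paper (reduce to the local unramified double coset at $y$ and invoke the classical elementary-divisor decomposition of $K_y\operatorname{diag}(\pi_y,1)K_y$ into $q_y+1$ cosets). For part (ii), however, your exhaustion step takes a genuinely different path. The paper argues by normal form: it takes an arbitrary right-coset representative $\xi$ of $K_x'p_x^{-1}K_x'$, deduces from the relation $k_1'p_x^{-1}k_2'=\xi k_3'$ and reduction mod $\pi_x$ that $\xi$ is (after Iwasawa decomposition) upper triangular of the shape $\bigl(\begin{smallmatrix}\pi_x&Q\\&1\end{smallmatrix}\bigr)$, and then sorts these by $Q\bmod\pi_x^2$. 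You instead count: the number of right $K_x'$-cosets equals $[K_x':K_x'\cap\alpha K_x'\alpha^{-1}]$, which you compute to be $q$ by exhibiting the index subgroup as the kernel of the homomorphism $K_x'\to(\kappa(x),+)$ reading off the $\pi_x$-coefficient of the $(1,2)$-entry (I checked that this map is indeed a homomorphism on $K_x'=\id+\pi_xM_2(\cO_x)$ and that its kernel is exactly $K_x'\cap\alpha K_x'\alpha^{-1}$); having already produced $q$ pairwise distinct cosets, you are done. Your counting argument is arguably cleaner and more robust — it avoids the paper's somewhat delicate normal-form manipulations and adapts immediately to the deeper level structures $K^{(x,r)}$ treated in the paper's Section 5, where the analogous decomposition is asserted with representatives $\bigl(\begin{smallmatrix}\pi_x&a\pi_x^{r-1}\\&1\end{smallmatrix}\bigr)$ — while the paper's argument has the virtue of explaining directly \emph{which} of the $q+1$ unramified cosets survives and which one is destroyed by the level structure. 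Your closing remark correctly identifies the one genuinely delicate point (the asymmetric effect of conjugation by $\alpha$ on the two off-diagonal entries), and your bookkeeping there is right.
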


\begin{proof}
The statements are equivalent to
\[ (i)\; K_yp_y^{-1}K_y=\coprod_{w\in \P^1(\kappa(y))}\xi_{y,w}K_y \quad \text{ and } \quad (ii)\; K_x'p_x^{-1}K_x'=\coprod_{a \in \FF_q}\begin{pmatrix} \pi_x&a\pi_x \\ &1 \end{pmatrix}K_x', \]
respectively. 
We will only prove $(ii)$ here. The proof of $(i)$ is similar and can be proved as the unramified, see \cite[Lemma 3.7]{gelbart-75}.

It follows as in the proof of \cite[Lemma 2.5]{alvarenga19} that we can find a finite number of $\xi \in G_x$ such that $K_x'p_xK_x'$ is the disjoint union of the $\xi K_x'$. Take such a $\xi \in G_x$, it exists $k_1',k_2',k_3' \in K_x'$ such that $k_1'p_xk_2'=\xi k_3'.$ We deduce that $\xi \in M_2(\FF_q[\pi_x])$ and that its reduction mod($\pi_x$) is ${ \tiny \begin{pmatrix} 0& \\ &1 \end{pmatrix}}$. Moreover, By Iwasawa decomposition and the decomposition of $K_x$ with $K_x'$ we know that we can write $\xi=hg$ with $h$ an upper triangular matrix and $g \in \GL_2(\FF_q)$. Since $h$ is reversible, the condition mod($\pi_x$) implies that $g$ is also upper triangular. Then, $\xi$ is upper triangular, we denote $\xi={ \tiny \begin{pmatrix} P&Q \\ &S \end{pmatrix}}$ with $P,Q,S \in \FF_q[\pi_x].$ By taking the determinant, we get $PS=\pi_x$. Then the condition mod($\pi_x$) shows that $P=\pi_x,S=1$. Finally, we note that \[ \begin{pmatrix} \pi_x&Q \\ &1 \end{pmatrix}K'=\begin{pmatrix} \pi_x&R \\ &1 \end{pmatrix}K'\] if and only if $\pi_x$ divides $Q-R$. Then we conclude that 
\[K_x'p_xK_x'\subset \coprod_{a \in \FF_q}\begin{pmatrix} \pi_x&a\pi_x \\ &1 \end{pmatrix}K_x'.\]
For the other inclusion, we observe that 
\[\begin{pmatrix} 1&a\pi_x \\ &1 \end{pmatrix}p_x=\begin{pmatrix} \pi_x&a\pi_x \\ &1 \end{pmatrix}\]which concludes the proof.
\end{proof}

The above proposition allows us to compute the $\Phi_y$-neighbors of $g\in G(F) \setminus G(\mathbb{A})/ZK'$, for every place $y$ of $\P^1.$

\begin{thm} \label{thm-ramifiededges}
Let $x \in |\P^1|$ be the fixed place of degree $1$ where we are considering the ramification $K'$. Let $y$ be a place of  $\P^1$ of degree $d$, $y \neq x$. Then,
    \begin{enumerate}[(i)]
        \item The  $\Phi_y'$-neighbors of $g$ in the graph $\cG_{\Phi_y', K'}$ are the classes of $ g \xi_{y,w}$, with $ w\in \P^1(\kappa(y))$. Moreover, for every $w \in \P^1(\kappa(y))$
        \[m(g,g\xi_{y,w})=\#\big\{w'\in \P^1(\kappa(y))\;|\; g \xi_{y,w'} \sim g\xi_{y,w}\big\}.\]
        In particular, every vertex in $\cG_{\Phi_y', K'}$  has $q^{d}+1$ neighbors when counted with multiplicity. \medskip
        
        \item The  $\Phi_x'$-neighbors of $g$ in the graph $\cG_{\Phi_x', K'}$ are the classes of
        \[ g\begin{pmatrix} \pi_x&a\pi_x \\ &1 \end{pmatrix}, \]
        with $ a\in \FF_q.$ Moreover, for every $a \in \FF_q$ 
    \[ m \big(g, g { \tiny \begin{pmatrix} \pi_x&a\pi_x \\ &1 \end{pmatrix}} \big)= \# 
    \left\lbrace a'\in \FF_q \;\big|\; g \begin{pmatrix} \pi_x&a'\pi_x \\ &1 \end{pmatrix} \sim g\begin{pmatrix} \pi_x&a\pi_x \\ &1 \end{pmatrix} \right\rbrace. \]
           In particular, every vertex in $\cG_{\Phi_x', K'}$ has $q$ neighbors when counted with multiplicity.
    \end{enumerate}
\end{thm}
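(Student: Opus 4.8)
The plan is to deduce Theorem \ref{thm-ramifiededges} directly from the coset decompositions in Proposition \ref{propdecomp} together with the general formula for the action of a Hecke operator on automorphic forms via its graph (Proposition \ref{propfund} and the discussion following Definition \ref{defgraphs}). Recall that for $\Phi = \mathrm{char}_{K'\sigma K'}/\mathrm{vol}(K')$ with $K'\sigma K' = \coprod_j \sigma_j K'$, the action on $f\in\cA^{K'}$ is $\Phi(f)(g) = \sum_j f(g\sigma_j)$. So the abstract machinery already tells us that the $\Phi$-neighbors of $[g]$ are precisely the classes $[g\sigma_j]$, with the multiplicity $m(g,h)$ equal to the number of indices $j$ for which $[g\sigma_j]=[h]$ in $G(F)\setminus G(\A)/ZK'$.

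For part (ii), I would apply this with $\sigma = p_x^{-1} = \bigl(\begin{smallmatrix}\pi_x & \\ & 1\end{smallmatrix}\bigr)$ (note $p_x = \bigl(\begin{smallmatrix}\pi_x^{-1}& \\ & 1\end{smallmatrix}\bigr)$, so $p_x^{-1}$ is the matrix appearing in the definition of $\Phi_x'$). By Proposition \ref{propdecomp}(ii), $K' p_x^{-1} K' = \coprod_{a\in\FF_q}\bigl(\begin{smallmatrix}\pi_x & a\pi_x\\ & 1\end{smallmatrix}\bigr) K'$, so the $q$ coset representatives are the matrices $\bigl(\begin{smallmatrix}\pi_x & a\pi_x\\ & 1\end{smallmatrix}\bigr)$, $a\in\FF_q$. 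Plugging these into the formula above immediately yields that the $\Phi_x'$-neighbors of $[g]$ are the classes of $g\bigl(\begin{smallmatrix}\pi_x & a\pi_x\\ & 1\end{smallmatrix}\bigr)$ and that the multiplicity is the cardinality stated. The ``in particular'' clause is just the observation that the $q$ representatives are partitioned into equivalence classes, so the multiplicities sum to $q$. Part (i) is entirely parallel, using Proposition \ref{propdecomp}(i): the representatives of $K' p_y^{-1} K'$ are the $\xi_{y,w}$ for $w\in\P^1(\kappa(y))$, which number $q_y+1 = q^d+1$, and the same formula gives the neighbors $g\xi_{y,w}$ and the multiplicity as the count of $w'$ with $g\xi_{y,w'}\sim g\xi_{y,w}$.

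There is one subtlety worth spelling out: Proposition \ref{propfund} works with the double cosets modulo the center $Z$ as well, whereas Proposition \ref{propdecomp} is a decomposition only modulo $K'$ on the right. One must check that passing to the quotient by $Z$ does not change the description — i.e. that replacing the representatives $g\sigma_j$ by their classes in $G(F)\setminus G(\A)/ZK'$ still gives the $\Phi$-neighbors with the asserted multiplicities. This is routine: since $\sigma = \bigl(\begin{smallmatrix}\pi_y& \\ & 1\end{smallmatrix}\bigr)$ and its $Z$-translates generate the same double coset structure, and since the multiplicity count in Definition \ref{defgraphs} is by definition taken in the $ZK'$-quotient, collapsing equivalent representatives simply merges terms and adds their multiplicities, which is exactly what the cardinality formula records. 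I expect this bookkeeping — correctly tracking that the ``$\sim$'' in the multiplicity formulas is equivalence in $G(F)\setminus G(\A)/ZK'$ (equivalently, by Propositions \ref{prop-ramifstrongaprox} and \ref{prop-ramifequiv}, in $\Gamma_x\setminus G_x/Z_xK_x'$) and not merely in $\Gamma_x\setminus G_x/K_x'$ — to be the only real point requiring care; the rest is a direct substitution into known formulas.
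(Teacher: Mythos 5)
Your proposal is correct and follows essentially the same route as the paper: decompose $K'\bigl(\begin{smallmatrix}\pi_z&\\&1\end{smallmatrix}\bigr)K'=\coprod_i g_iK'$ via Proposition \ref{propdecomp}, unwind the convolution integral using right $K'$-invariance of $f$ to get $\Phi_z(f)(g)=\sum_i f(gg_i)$, and read off the neighbors and multiplicities by collapsing representatives that agree in $G(F)\setminus G(\A)/ZK'$. The only difference is cosmetic — you quote the standard single-coset formula where the paper writes out the integral, and you make explicit the (routine) bookkeeping about quotienting by the center, which the paper leaves implicit.
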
 

\begin{proof}  Let $z$ be any place of $\P^1$, i.e.\ either $z=x$ or $z=y$. Suppose that 
\[ K' \begin{pmatrix}
    \pi_z & \\
     & 1 
\end{pmatrix} K'= \coprod_{i=1}^n g_i K'.\] 
Then, for every $ f \in \cA^{K'}$, 
    \begin{align*}
        \Phi_z(f)(g)&=(1/\textrm{vol }K')\int_{G(\A)} \mathrm{char} (K'\begin{pmatrix}
    \pi_z & \\
     & 1 
\end{pmatrix} K')(h)f(gh)dh\\
        &=(1/\textrm{vol }K')\sum_{i=1}^n \int_{g_i K'} f(gh)dh\\
        &=\sum_{i=1}^nf(g\alpha_i).
    \end{align*}
Thus, the theorem follows from the previous proposition.
\end{proof}

\begin{rem} \label{rem-calculation}
    Here it is an important remark where we explain a method to compute the class of an element  $g_{x} \in G_x$ in $\Gamma_x \setminus G_x/Z_xK_x'$ even if it is not of the form $p_{nx}g$, with $g \in \GL_2(\FF_q)$. The first step is to use the algorithm given by Lorscheid in \cite[App. A]{oliver-graphs} to determine its class in $\Gamma_x \setminus G_x/Z_xK_x$. At this point, we will know a $n \in \N$, a $\gamma \in \Gamma_x$, a $z \in Z_x$ and a $k \in K_x$ such that $g_{x}=z\gamma p_{nx} k$. Then we write $k=gk'$ with $g \in \GL_2(\FF_q)$ and $k' \in K'$. Thus, we have $g_{x}=z\gamma p_{nx}gk'$ and then we know that $g_{x}\sim p_ng$ in $\Gamma_x \setminus G_x/Z_xK_x'$, which allow us to conclude. We see  the convenience of taking fibers by $P$ to deduce a system of representatives from another one, where we already know an algorithm to get every classes. This technique can obviously be generalized for others $K'$, cf. section \ref{sec-generalramification}. We use this technique to draw the graphs.
\end{rem}


\subsection*{Digression on another system of representatives}
Before drawing the graphs, we will show an example of the technique of the previous remark to answer an interrogation the reader might have. We consider $\{c_{nx} \;|\; n \in \N\}$ as a system of representatives of $G(F) \setminus G(\mathbb{A})/Z K$ but, according to \cite[App. A]{oliver-graphs}, we can also consider $\{c_{-nx}\;|\; n \in \N\}$ as a system of representatives. The questions are: why are we making this choice? And what would it be this new system of representative of the ramified double coset? If we want to get a system of representatives of the unramified double coset from this system of representatives, we can do the same thing, and it will appear that a system of representatives of the fiber $P^{-1}(c_{nx})$ will be 
$ \{ p_{-nx}\vartheta_w \;|\; w \in \P^1(\FF_q) \}$ 
with 
\[ \vartheta_{[1:a]}:=\begin{pmatrix} 1&a \\ &1 \end{pmatrix}  \; \text{ and } \; \vartheta_{[0:1]}~:=~\begin{pmatrix} &1 \\ 1& \end{pmatrix},\] 
for $a \in \FF_q.$
Since we now get two systems of representatives for the fiber $P^{-1}(c_n)$ we can ask the bijection between them. To answer this, let us use the technique of the previous remark. We know that $p_{nx}$ and $p_{-nx}$ are equivalent in $G(F) \setminus G(\mathbb{A})/Z K$ because they are equivalent in $\Gamma_x \setminus G_x/Z_xK_x$, namely
$$ p_{nx}=\begin{pmatrix} &1 \\ 1& \end{pmatrix}p_{-nx}\begin{pmatrix} \pi_x^{n}& \\ &\pi_x^{n} \end{pmatrix}\begin{pmatrix} &1 \\ 1& \end{pmatrix}.$$
Hence, multiplying by $\vartheta_w$ on the left we get that $p_{nx}\vartheta_w$ is equivalent to 
\[ p_{-nx}\begin{pmatrix} &1 \\ 1& \end{pmatrix}\vartheta_w\] 
in $\Gamma_x \setminus G_x/Z_xK_x'$. Moreover, observe that 
\[ p_{-nx}\begin{pmatrix} &1 \\ 1& \end{pmatrix}\vartheta_{[1:a]} 
\sim p_{-nx}\vartheta_{[1:a^{-1}]}'\]
for all $a \in \FF_q^{\times}$, 
\[ p_{-nx}\begin{pmatrix} &1 \\ 1& \end{pmatrix}\vartheta_{[1:0]}
\sim p_{-nx}\vartheta_{[1:0]}' \quad \text{ and } \quad 
p_{-nx}\begin{pmatrix} &1 \\ 1& \end{pmatrix}\vartheta_{[0:1]}
\sim p_{-nx}\vartheta_{[0:1]}'.\]
Therefore, yields that $\{p_{-nx}\vartheta_w' \;|\; w \in \P^1(\FF_q) \}$ is a system of representatives of the fiber $P^{-1}(c_n)$ and the exact bijection between the two system or representatives is
\begin{align*} 
    p_{nx}\vartheta_{[1:a]} &\sim p_{-nx}\vartheta_{[1:a^{-1}]}' \\
    p_{nx}\vartheta_{[1:0]} &\sim p_{-nx}\vartheta_{[0:1]}'\\
    p_{nx}\vartheta_{[0:1]} &\sim p_{-nx}\vartheta_{[1:0]}'
 \end{align*}
for all $a \in \FF_q^{\times}.$
We will keep using the system of representatives given by $p_{nx},\; n \in \N$, although we could change the system of representative at any moment to get the most adapted one, thanks to the aforementioned discussion.\\

Next, we describe the graphs of $\Phi_z'$ for every place $z$ of $\P^1$. Consequently, we can calculate the dimension of its eigenspaces. 


\subsection*{Graph and eigenspace of $\Phi_x'$}
We recall that  $x$ is the fixed place of $\P^1$, where we consider the ramification in $K'$.
Let $n \in \N^{\times}$, $w \in \P^1(\FF_q)$ and $a \in \FF_q$. From Theorems \ref{thm-ramifiedvertices} and \ref{thm-ramifiededges}, in order to describe the edges in $\cG_{\Phi_x', K'}$ we have to find the class of 
\[ p_{nx}\vartheta_w\begin{pmatrix} \pi_x&a\pi_x \\ &1 \end{pmatrix} \]
in $G(F) \setminus G(\mathbb{A})/ZK'$, which is equivalent by Proposition \ref{prop-ramifstrongaprox} to find the class in $\Gamma_x \setminus G_x/Z_xK_x'$. Using the strategy in Remark \ref{rem-calculation},

\begin{align*}
     p_{nx}\vartheta_{[0:1]}\begin{pmatrix} \pi_x&a\pi_x \\ &1 \end{pmatrix}&=\begin{pmatrix} \pi_x^{-n}& \\ &\pi_x \end{pmatrix}\begin{pmatrix} &1 \\ 1&a \end{pmatrix}\sim 
        p_{(n+1)x}\vartheta_{[1:a^{-1}]} \\
     &\\
  p_{nx}\vartheta_{[1:b]}\begin{pmatrix} \pi_x&a\pi_x \\ &1 \end{pmatrix}&
  =p_{(n-1)x}
   \begin{pmatrix} 1&a \\ b\pi_x&ab\pi_x +1 \end{pmatrix}\sim  p_{(n-1)x} \\
         &\\
  p_{0}\begin{pmatrix} \pi_x&a\pi_x \\ &1 \end{pmatrix}&=p_{x}\begin{pmatrix} 1&a \\ &1 \end{pmatrix}\sim 
        p_{x}.\ \\
\end{align*}
Which give us the full graph $\cG_{\Phi_x', K'}$, see Figure \ref{fig5}. We use the explicitly description of  $\cG_{\Phi_x', K'}$ in the following to investigate the eigenspace $\cA^{K'}(\Phi_x', \lambda)$, for  $\lambda \in \C^{\times}$. 


\begin{center}
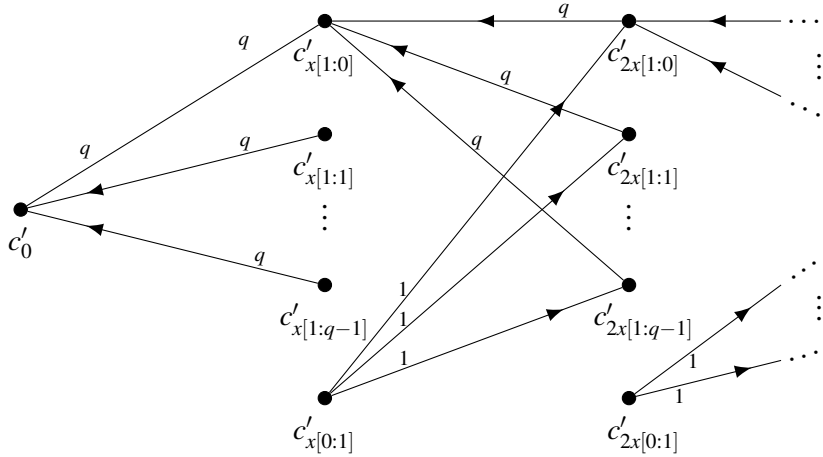
\begin{figure}[h]
\centering
\begin{minipage}[h]{\linewidth}
\[
  \beginpgfgraphicnamed{tikz/fig5}
  \begin{tikzpicture}[>=latex, scale=2]

        \vertex[circle,fill, minimum size = 5pt, label={below:\small $c_{0}'$}](c0) at (0,0) {};

      \vertex[circle,fill,minimum size = 5pt, label={below:\small 
      $c_{x[1:0]}'$}](cx10) at (2,1.25) {};
        \vertex[circle,fill,minimum size = 5pt, label={below:\small 
        $c_{x[1:1]}'$}](cx11) at (2,0.5){};
    \vertex[circle,fill,minimum size = 5pt, label={below:\small 
    $c_{x[1:q-1]}'$}](cx1q-1) at (2,-0.5) {};
    \vertex[circle,fill,minimum size = 5pt, label={below:\small 
    $c_{x[0:1]}'$}](cx01) at (2,-1.25) {};

      \vertex[circle,fill,minimum size = 5pt, label={below,right=0.2:\small 
      $c_{2x[1:0]}'$}](c2x10) at (4,1.25) {};
        \vertex[circle,fill,minimum size = 5pt, label={below,right=0.2:\small 
        $c_{2x[1:1]}'$}](c2x11) at (4,0.5){};
    \vertex[circle,fill,minimum size = 5pt, label={below,right=0.2:\small 
    $c_{2x[1:q-1]}'$}](c2x1q-1) at (4,-0.5) {};
    \vertex[circle,fill,minimum size = 5pt, label={below,right=0.2:\small 
    $c_{2x[0:1]}'$}](c2x01) at (4,-1.25) {};
   

\node at (2,0) {$\vdots$};

\node at (4,0) {$\vdots$};


  \path[-,font=\scriptsize]
   (c0) edge[-=0.8] node[pos=0.2,auto,above]{\tiny $q$} 
    node[pos=0.8,auto,black]{\tiny $q$}  (cx10) ;
 \path[-,font=\scriptsize] (cx11) edge[->-=0.8] node[pos=.25,above=-0.1,black]{\tiny $q$} node[pos=0.8,auto,black]{}  (c0)  ;
  \path[-,font=\scriptsize] (cx1q-1)edge[->-=0.8] node[pos=0.2,above=-0.1,black]{\tiny $q$} 
  node[pos=0.8,auto,black]{}  (c0)   ;


   \path[-,font=\scriptsize] (cx01) edge[->-=0.8] node[pos=0.25,above=-0.08,black]{\tiny $1$}  (c2x10) ;
    \path[-,font=\scriptsize] (cx01) edge[->-=0.8] node[pos=0.25,above=-0.1,black]{\tiny $1$} (c2x11) ;
 \path[-,font=\scriptsize] (cx01) edge[->-=0.8] node[pos=0.25,above=-0.1,black]{\tiny $1$} (c2x1q-1) ;

 \path[-,font=\scriptsize]  (c2x10) edge[->-=0.5] node[pos=0.2,above=-0.1,black]{\tiny $q$}  (cx10) ;
 \path[-,font=\scriptsize]  (5,1.25) edge[->-=0.5] node[pos=0.2,auto,black]{\tiny }  (c2x10) ;
  \path[-,font=\scriptsize]  (5,0.75) edge[->-=0.5] node[pos=0.2,auto,black]{\tiny }   (c2x10);
   \path[-,font=\scriptsize] (c2x11) edge[->-=0.8] node[pos=0.4,above=-0.1,black]{\tiny $q$}  (cx10) ;
    \path[-,font=\scriptsize] (c2x1q-1) edge[->-=0.8] node[pos=0.5,above=-0.1,black]{\tiny $q$} (cx10) ;
 \path[-,font=\scriptsize] (c2x01) edge[->-=0.8] node[pos=0.4,below=-0.1,black]{\tiny $1$} (5,-.5) ;
 \path[-,font=\scriptsize] (c2x01) edge[->-=0.8] node[pos=0.3,below=-0.1,black]{\tiny $1$} (5,-1) ;


\node at (5.15,1.25) { $\ldots$};
\node[rotate=11] at (5.15,0.72) { $\ddots$};
\node[rotate=15] at (5.15,-.97){ $\ldots$};
\node[rotate=35] at (5.15,-.4){ $\ldots$};
\node at (5.25,1) { $\vdots$};
\node at (5.25,-.6) { $\vdots$};

   \end{tikzpicture}
 \endpgfgraphicnamed
\]
\end{minipage} 
 \caption{The graph $\mathcal{G}_{\Phi_{x}',K'}$.}
  \label{fig5}
\end{figure}

\end{center}


\begin{thm} \label{thm-mainramified1}
    If $\lambda \in \C^{\times}$, the map 
    \[ \begin{array}{cccc}
   \cA^{K'}(\Phi_x', \lambda) & \longrightarrow & \C \\[1pt]
   f & \longmapsto & f(c_0') \\
\end{array} \] 
is an isomorphism of $\C$-vector spaces. In particular, $\dim \cA^{K'}(\Phi_x', \lambda)=1$.
\end{thm}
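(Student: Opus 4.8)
The plan is to exploit the explicit description of the graph $\cG_{\Phi_x',K'}$ in Figure \ref{fig5} exactly as in the proof of Theorem \ref{thm-mainunramified}: show the evaluation map $f \mapsto f(c_0')$ is injective by propagating the value $f(c_0')$ through the graph using the eigenform equation, and show surjectivity by building, from any chosen value at $c_0'$, an automorphic form on all of $\mathrm{Vert}\,\cG_{\Phi_x',K'}$ that is $\Phi_x'$-eigen with eigenvalue $\lambda$ and of moderate growth. The computations preceding Figure \ref{fig5} give us the three families of edges we need: $c_0'$ has $q$ edges, all going to the vertices $c_{x,[1:a]}'$ (with the single edge to $c_{x,[1:0]}'$ carrying multiplicity $q$, or rather the neighbor set is $\{c_{x,[1:a]}'\}_{a\in\FF_q}$ counted with multiplicity $q$); each $c_{nx,[0:1]}'$ has its $q$ neighbors among $c_{(n+1)x,[1:a]}'$; and each $c_{nx,[1:b]}'$ for $n \geq 1$ has $c_{(n-1)x,\ast}'$ as neighbor (with $c_{0}'$ reached from level one). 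The key structural feature, visible from the three displayed equivalences, is that from a vertex at ``level $n$'' of type $[1:b]$ the $\Phi_x'$-operator moves strictly toward level $n-1$, while from type $[0:1]$ it moves to level $n+1$; there is no edge that stays at the same level except possibly inside a bounded region near $c_0'$.

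\textbf{Injectivity.} Suppose $f,f' \in \cA^{K'}(\Phi_x',\lambda)$ with $f(c_0') = f'(c_0')$. Writing the eigenform equation $\lambda f(c_0') = \Phi_x'(f)(c_0') = \sum_{a\in\FF_q} m(c_0',c_{x,[1:a]}')\,f(c_{x,[1:a]}')$ together with the companion edges reaching $c_0'$ from level one, and running the analogous argument level by level, one sees that the values of $f$ at each successive level are forced: more precisely, for every vertex $v$ at level $n \geq 1$ of type $[1:b]$ the equation $\lambda f(v) = \sum m(v,w)f(w)$ expresses $f(v)$ in terms of values at level $n-1$, so an easy induction on $n$ shows $f$ is determined everywhere by $f(c_0')$ and hence $f = f'$. (One must check the finite ``nucleus'' near $c_0'$ — the vertices $c_{x,[1:a]}'$ and the single self-type edge structure — handles consistently; since $\lambda \neq 0$ this is where the hypothesis $\lambda \in \C^{\times}$ is used, exactly as in Theorem \ref{thm-mainunramified}.)

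\textbf{Surjectivity.} Conversely, fix $\mu \in \C$ and set $f(c_0') := \mu$. Use the eigenvalue equation at $c_0'$ to solve for a consistent choice of $f$ on the level-one vertices $c_{x,[1:a]}'$ and $c_{x,[0:1]}'$ (there is genuine freedom here, but it must be pinned down so that the equations at level one are also satisfiable — this is the only delicate bookkeeping point), then propagate outward: having defined $f$ on levels $\leq n$, the eigenform equations at the level-$n$ vertices of type $[1:b]$ and $[0:1]$ determine $f$ on level $n+1$ uniquely. Moderate growth is then automatic from the defining recursion, since (as in Theorem \ref{thm-mainunramified}) the recursion expresses level-$(n{+}1)$ values as fixed $\C$-linear combinations of values at levels $n$ and $n-1$ with bounded coefficients, giving at worst exponential growth in $n$, which is dominated by $\|g\|^N$ on the corresponding double cosets. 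Hence $f \in \cA^{K'}$ and $f$ lies in $\cA^{K'}(\Phi_x',\lambda)$, so the evaluation map is onto $\C$.

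\textbf{Main obstacle.} The only real subtlety is the base of the induction: unlike the clean ``tree growing away from $c_0'$'' picture, the three displayed equivalences show the edges among the level-one vertices $\{c_{x,[1:a]}',c_{x,[0:1]}'\}$ and $c_0'$ form a small ``nucleus'' with edges going both into and out of $c_0'$, so one must verify that prescribing $f(c_0')$ really does determine (injectivity) and can be consistently extended over (surjectivity) this finite nucleus before the unambiguous outward propagation takes over. Using $\lambda \neq 0$ and the precise multiplicities from Figure \ref{fig5}, this reduces to solving a small explicit linear system, after which the rest is the same induction as in the proof of Theorem \ref{thm-mainunramified}. Thus $\dim \cA^{K'}(\Phi_x',\lambda) = 1$.
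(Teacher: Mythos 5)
Your overall strategy --- read the eigenform equations off the explicit graph $\cG_{\Phi_x',K'}$, prove injectivity by propagating $f(c_0')$ through the vertices, and prove surjectivity by constructing an eigenform from a prescribed value at $c_0'$ and checking moderate growth --- is the same as the paper's, and your injectivity half is essentially the paper's argument. The surjectivity half, however, is described with the propagation running the wrong way. The eigenform equation at a level-$n$ vertex of type $[1:b]$ reads $\lambda f(c_{nx,[1:b]}')=q\,f(c_{(n-1)x,[1:0]}')$ (with $c_0'$ in place of $c_{(n-1)x,[1:0]}'$ when $n=1$): it involves only a level-$(n-1)$ value, so it cannot ``determine $f$ on level $n+1$''. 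The equation at $c_{nx,[0:1]}'$ does look upward, but it is a single linear relation among the $q$ values at level $n+1$ and does not determine them either. What the paper actually does is use the equations at the type-$[1:a]$ vertices as a \emph{downward} recursion, yielding the closed forms $f(c_{nx,[1:a]}')=(q/\lambda)^{n}f(c_0')$, then define $f(c_{nx,[0:1]}'):=(q/\lambda)^{n+2}f(c_0')$ so that the equations at those vertices hold, and finally verify moderate growth directly from these formulas. Your claim that there is ``genuine freedom'' at level one is also not right: each equation $\lambda f(c_{x,[1:a]}')=q\,f(c_0')$ pins down $f(c_{x,[1:a]}')$ completely.

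The more serious gap is the point you explicitly defer, the ``small explicit linear system'' at the nucleus: that is not bookkeeping, it is where the content of the theorem sits, and it must actually be solved. Your description of the edges at $c_0'$ is self-contradictory (``$q$ edges to the $c_{x,[1:a]}'$\,\ldots\,or rather a single edge of multiplicity $q$ to $c_{x,[1:0]}'$\,\ldots\,or the neighbor set counted with multiplicity $q$''); the computations displayed before Figure \ref{fig5} give a single neighbor $c_{x,[1:0]}'$ of $c_0'$ with multiplicity $q$, and symmetrically $m(c_{x,[1:0]}',c_0')=q$. Writing out the two eigenform equations on this two-cycle gives $\lambda f(c_0')=q f(c_{x,[1:0]}')$ and $\lambda f(c_{x,[1:0]}')=q f(c_0')$, hence $(\lambda^2-q^2)f(c_0')=0$: the nucleus system you postponed is overdetermined, and with the multiplicities as drawn it admits a solution with $f(c_0')\neq 0$ only when $\lambda=\pm q$. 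A complete argument must therefore either re-examine the local computation of the neighbors of $c_0'$ and of the level-one vertices, or confront this constraint on $\lambda$ explicitly. Note that the eigenform equation at $c_0'$ itself is precisely the relation that the recursion never uses as a definition, so it is the one that has to be checked by hand for the constructed $f$; deferring it to ``the same induction as in Theorem \ref{thm-mainunramified}'' leaves the proof incomplete at its only nontrivial point.
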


\begin{proof}
    Let $n\geq 2$ be an integer. For every $f \in \cA^{K'}(\Phi_x', \lambda)$ we have the following relations from the graph $\cG_{\Phi_x',K'}$
    \begin{align*}
   f(c_{nx,[1:a]}') & = \frac{q}{\lambda}f(c_{(n-1)x,[1:0]}')= \frac{q^n}{\lambda^n}f(c_0'), \\
   f(c_{nx,[0:1]}') & = \frac{1}{\lambda}\sum_{a\in \FF_q}f(c_{(n+1)x,[1:a]}')=\frac{q^2}
   {\lambda^2}f(c_{nx,[1,0]}')=\frac{q^3}{\lambda^3}f(c_{(n-1)x',[1,0]}')=\frac{q^{n+2}}{\lambda^{n+2}}f(c_0')
\end{align*}
for all $a \in \FF_q.$ 
Which proves that if $ f,f'\in \cA^{K'}(\Phi_x', \lambda)$, then $f=f'$ if, and only if, $f(c_0')=f'(c_0')$. 
To conclude, consider the unique function 
\[f : G(F) \setminus G(\mathbb{A})/Z K'\longrightarrow \C\] 
such that $f(c_0')=1$ and the other values of $f$ are the ones predicated by the above relations.  By construction, if $f \in \cA^{K'}$, then it will automatically be in the eigenspace  $\cA^{K'}(\Phi_x', \lambda)$. Hence it suffices to show that $f \in \cA^{K'} $ i.e.\ that $f$ respect the condition of moderate growth. 

Let $m$ be such that $q^{m-1}>|\lambda|^{-1}$. We are left to verify that $|f(c_{nx,w}')|<|c_{nx,w}'|^m$ for every $n \in \N$ and every $w \in \P^1(\FF_q)$, which shows the moderate condition. From above identities, $|f(c_{nx,w}')|$ equals either $\frac{q^n}{|\lambda|^n}$ or  $\frac{q^{n+2}}{|\lambda|^{n+2}}$, depending on $w \in \P^1(\Fq)$. We can suppose we are in the first case since the condition of moderate growth does not depend on any multiplicative constant. Since $|c_{nx,w}'|=q^n$, then 
\[\frac{|f(c_{nx,w}')|}{|c_{nx,w}'|^m} \leq \frac{q^n}{|\lambda|^nq^{nm}}=\frac{1}{(|\lambda|q^{m-1})^n} < 1, \]
where the last inequality follows from  the hypothesis on $m$.  
\end{proof}


\begin{df} \label{def-cusp} An automorphic form $f \in \cA$ is a cusp form (or cuspidal) if 
\[ \int_{U(F)\setminus U(\A)} f(ug)du = 0\]
for all $g \in G(\A)$ and all unipotent radicals subgroups $U$ of all standard parabolic subgroups $P$ of $G(\A)$. We denote the whole space of cusp forms by $\cA_{0}$.   If $f \in \cA^{K}$ is a cusp form, we call it an unramified cusp form and denote the whole space of unramified cusp forms by $\cA_{0}^{K}$. If $f \in \cA^{K'}$ is a cusp form, we call it a ramified cusp form (with respect to $K'$)  and denote the whole space of ramified cusp forms by $\cA_{0}^{K'}$. 
Finally, if $f \in \cA_{0}^{K'}$ is an $\Phi$-eigenform with eigenvalue $\lambda \in \C$, for some $\Phi \in \cH_{K'}$, we say $f$ is $\Phi$-eigenform cuspidal. We denote the whole space of $\Phi$-eigenforms cuspidal with eigenvalue $\lambda \in \C$ by $\cA_{0}^{K'}(\Phi, \lambda)$.  

\end{df}

\begin{cor} \label{cor-cuspramifiedforms} There are no nontrivial ramified (regarding $K'$) cuspidal $\Phi_x'$-eigenforms  for the projective line, i.e.\ $\cA_{0}^{K'}(\Phi_x ', \lambda) = \{0\}$, for all  $\lambda \in \C^{\times}$. 
\end{cor}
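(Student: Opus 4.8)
The plan is to deduce Corollary~\ref{cor-cuspramifiedforms} directly from Theorem~\ref{thm-mainramified1}, by showing that the one-dimensional eigenspace $\cA^{K'}(\Phi_x',\lambda)$ contains no nonzero cusp form. Since $\dim \cA^{K'}(\Phi_x',\lambda) = 1$ and this space is spanned by the explicit function $f$ constructed in the proof of Theorem~\ref{thm-mainramified1} (normalized so that $f(c_0')=1$), it suffices to exhibit a single vertex $g$ of $\cG_{\Phi_x',K'}$ together with a standard parabolic $P$ and its unipotent radical $U$ such that $\int_{U(F)\setminus U(\A)} f(ug)\,du \neq 0$. Any nonzero element of the eigenspace is a scalar multiple of $f$, so this single non-vanishing integral rules out all nonzero cuspidal eigenforms.

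First I would recall that for $G=\GL_2$ the only proper standard parabolic is the Borel $B$, with unipotent radical $U = \left\{\left(\begin{smallmatrix}1 & *\\ 0 & 1\end{smallmatrix}\right)\right\}$, so the constant-term (cuspidality) condition reduces to the single equation $\int_{U(F)\setminus U(\A)} f(ug)\,du = 0$ for all $g$. The natural choice is to test at $g = c_0' = \id$ (or at a vertex $p_{nx}\vartheta_w$ with $n$ large). The key computation is then to understand the $U(F)\setminus U(\A)$-orbit of the chosen vertex inside $G(F)\setminus G(\A)/ZK'$: for each coset representative $u \in U(\A)$ modulo $U(F)$ and modulo the stabilizer coming from $K'$, one identifies the vertex $[ug]$ among the $c_0', c_{nx,w}'$, reads off $f$ there using the explicit formulas $f(c_{nx,[1:a]}') = (q/\lambda)^n$ and $f(c_{nx,[0:1]}') = q^{n+2}/\lambda^{n+2}$ from the proof of Theorem~\ref{thm-mainramified1}, and sums with the appropriate (finite) Haar-measure weights. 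Because $U(F)\setminus U(\A)$ is compact and $f$ is locally constant, this integral is a finite sum; one must check it does not vanish.

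The main obstacle I anticipate is the bookkeeping of the $U$-action on the ramified double coset: unlike the unramified case, a unipotent translate $\left(\begin{smallmatrix}1 & t\\ 0 & 1\end{smallmatrix}\right)$ at the place $x$ interacts with $K_x'$ rather than $K_x$, so the orbit of a vertex $c_{nx,w}'$ under $U(\A)/U(F)$ will generally spread over several of the vertices $c_{mx,v}'$ with varying $m,v$, and one needs Remark~\ref{rem-calculation} (the reduction algorithm) to pin each one down. I would organize this as a short lemma computing, for a well-chosen vertex, the finitely many vertices appearing in the orbit together with their weights, and then observe that $f$ has constant sign (up to the fixed power of $\lambda$) on all of them — indeed $|f(c_{nx,w}')|$ is a positive power of $q/|\lambda|$ — so no cancellation can occur and the integral is nonzero. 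This forces $\cA_0^{K'}(\Phi_x',\lambda) = \{0\}$.

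An alternative, possibly cleaner, route avoids computing any integral: one can instead invoke that every cuspidal automorphic form is of rapid decay (bounded, in fact compactly supported modulo $ZG(F)K'$ along the torus direction), whereas the explicit generator $f$ of $\cA^{K'}(\Phi_x',\lambda)$ satisfies $|f(c_{nx,w}')| = (q/|\lambda|)^{n}$ up to a bounded factor, which is unbounded in $n$ whenever $|\lambda| < q$, and is bounded but nowhere decaying whenever $|\lambda| \geq q$; in either case $f$ is not compactly supported modulo the centre, hence not cuspidal, so the only cuspidal element of this one-dimensional space is $0$. I would present whichever of these two arguments is shorter given the conventions already set up, but I expect the growth-based argument to be the least painful, since the decay property of cusp forms on $\GL_2$ over a function field is standard and the growth of $f$ is already written down in the proof of Theorem~\ref{thm-mainramified1}.
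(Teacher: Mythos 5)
Your proposal is correct in substance but proceeds along a genuinely different route from the paper. The paper does not touch the constant term or the growth of the generator at all: it invokes Li's decomposition theorem \cite[Thm.~3.2.2]{li-79} to write $\cA^{K'}(\Phi_x',\lambda)$ as the direct sum of an Eisenstein part, a residual part and the cuspidal part, and then uses the same theorem to compute that the Eisenstein part already has dimension at least $1$; since Theorem~\ref{thm-mainramified1} says the whole eigenspace is one-dimensional, the cuspidal summand must vanish. Your preferred (growth-based) argument instead exploits the explicit formulas $|f(c_{nx,w}')|=(q/|\lambda|)^{n}$ or $(q/|\lambda|)^{n+2}$ from the proof of Theorem~\ref{thm-mainramified1}: the generator is nonvanishing at every vertex, hence not compactly supported modulo $ZG(F)K'$, hence not cuspidal by the standard compact-support property of cusp forms over function fields; this is self-contained given the material already in the paper and arguably more elementary, at the cost of importing that (standard but nontrivial) support theorem of Harder rather than Li's spectral result. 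One caution on your first route: the claim that the constant-term integral cannot vanish because $f$ ``has constant sign'' is not justified as stated --- the values $f(c_{mx,v}')=(q/\lambda)^{m}$ appearing in the orbit sum are complex numbers whose arguments vary with $m$ when $\lambda\notin\R_{>0}$, so cancellation is a priori possible and the integral would actually have to be computed. Since you default to the growth argument, the proposal as a whole stands, but the integral route should not be presented as an equally finished alternative without that computation.
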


\begin{proof} By \cite[Thm. 3.2.2]{li-79} 
\[ \cA^{K'}(\Phi_x', \lambda) = \cE^{K'}(\Phi_x', \lambda) \oplus 
\cR^{K'}(\Phi_x', \lambda) \oplus \cA_{0}^{K'}(\Phi_x', \lambda) \]
where $\cE$ is the Eisenstein part of $\cA$  spanned by all Eisenstein series and their derivatives, and $\cR$ is the residual part of $\cA$ spanned by the residues of Eisenstein series and their derivatives. 

Let $B$ be the standard Borel subgroup of $G$ and $N$ its unipotent radical. 
The  \cite[Thm. 3.2.2]{li-79} still yields  
\[ \dim  \cE^{K'}(\Phi_x', \lambda) = \#\big(K/K'(Z N(\A)B(\Fq) \cap K)\big) \geq 1. \]
Comes from the above theorem that $\cA^{K'}(\Phi_x', \lambda) = \cE^{K'}(\Phi_x', \lambda)$ and therefore $\cA_{0}^{K'}(\Phi_x', \lambda)$ is trivial. 
    \end{proof}

    \begin{cor} \label{cor-trivicuspramifiedforms} There are no nontrivial ramified (regarding $K'$) cuspidal automorphic forms  for the projective line, i.e.\ $\cA_{0}^{K'} = \{0\}$. 
\end{cor}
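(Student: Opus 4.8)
The plan is to deduce the statement from Corollary \ref{cor-cuspramifiedforms} together with the finiteness of the cuspidal spectrum, the only point not already covered being the Hecke eigenvalue $0$. First I would recall that $\cA_{0}^{K'}$ is finite dimensional (cf.\ \cite{li-79}, and note that it appears as a direct summand in the decomposition used in the proof of Corollary \ref{cor-cuspramifiedforms}) and that it is stable under $\cH_{K'}$, in particular under $\Phi_x'$. If $\cA_{0}^{K'}\neq\{0\}$, then, working over $\C$, the operator $\Phi_x'$ has an eigenvector $0\neq f\in\cA_{0}^{K'}$, say $\Phi_x' f=\lambda f$. When $\lambda\in\C^{\times}$ this contradicts Corollary \ref{cor-cuspramifiedforms}, so the whole corollary reduces to the assertion that no nonzero cusp form is annihilated by $\Phi_x'$, i.e.\ that $\cA_{0}^{K'}(\Phi_x',0)=\{0\}$.

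To treat the eigenvalue $0$ I would use the explicit graph $\cG_{\Phi_x',K'}$ of Figure \ref{fig5} — equivalently the three equivalences computed just before Theorem \ref{thm-mainramified1}, which describe the $\Phi_x'$-neighbours of $c_0'$, of the $c_{nx,[1:b]}'$ and of the $c_{nx,[0:1]}'$. Imposing $\Phi_x' f=0$ forces at once
\[ f(c_0')=0,\qquad f(c_{nx,[1:0]}')=0\ \ (n\geq1),\qquad \sum_{w\in\P^1(\FF_q)\setminus\{[1:0]\}}f(c_{nx,w}')=0\ \ (n\geq2), \]
so $f$ is supported on the ``outer'' vertices $c_{nx,w}'$ with $w\neq[1:0]$, and, $f$ being a cusp form, on finitely many of them. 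Next I would bring in cuspidality: using the system of representatives $\{\id\}\cup\{(p_{nx}\vartheta_w)_x\}$ of Theorem \ref{thm-ramifiedvertices}, one computes the constant term $\int_{N(F)\setminus N(\A)}f(ug)\,du$ along the standard Borel $B$ at the vertices $g=p_{nx}\vartheta_w$. These integrals vanish because $f$ is cuspidal, and, using the vanishings already established, a suitable combination of these identities expresses each remaining value $f(c_{nx,w}')$ in terms of values known to be $0$; iterating yields $f\equiv0$. This gives $\cA_{0}^{K'}(\Phi_x',0)=\{0\}$ and, with the first paragraph, proves the corollary.

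I expect the main obstacle to be precisely this last step. In contrast with the case $\lambda\in\C^{\times}$ — where Theorem \ref{thm-mainramified1} already makes the eigenspace one dimensional and Corollary \ref{cor-cuspramifiedforms} applies verbatim — the operator $\Phi_x'$ is very far from injective on $\cA^{K'}$: the relations displayed above leave the values $f(c_{nx,w}')$ with $w\neq[1:0]$ essentially free, so $\cA^{K'}(\Phi_x',0)$ is nonzero (indeed infinite dimensional), and the defining cuspidality condition must be used in an essential way. The delicate book-keeping is to sort out how the unipotent translates of the $p_{nx}\vartheta_w$ distribute among the classes of Theorem \ref{thm-ramifiedvertices}, including the subtlety at the place $x$, where $K_x'$ is strictly smaller than $K_x$. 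As an alternative check one could instead invoke the Langlands correspondence for $\GL_2$ over $\FF_q(t)$ \cite{drinfeld83}: a nonzero ramified cusp form would produce an irreducible rank-$2$ $\ell$-adic local system on $\P^1$, lisse away from $x$ and of conductor at most $2$ at $x$, and a Grothendieck–Ogg–Shafarevich Euler-characteristic count shows no such local system exists; but the graph-theoretic argument is self-contained and fits the spirit of the paper.
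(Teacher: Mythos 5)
Your first paragraph is essentially the paper's own reduction, just packaged differently: the paper invokes semisimplicity of the cuspidal spectrum and the Multiplicity One Theorem to write every ramified cusp form as a sum of $\cH_{K'}$-eigenforms and then cites Corollary \ref{cor-cuspramifiedforms}, whereas you extract a single $\Phi_x'$-eigenvector from the finite-dimensional $\Phi_x'$-stable space $\cA_0^{K'}$. Both routes are fine and both land on the same point: Corollary \ref{cor-cuspramifiedforms} only covers $\lambda\in\C^{\times}$. You are right to flag the eigenvalue $0$ as the residual issue (the paper's proof passes over it in silence, since $\Phi_x'$ is not visibly invertible in $\cH_{K'}$ and an eigencharacter could a priori vanish on it), and isolating $\cA_0^{K'}(\Phi_x',0)=\{0\}$ as the remaining claim is a genuine and worthwhile observation.

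The gap is that you do not actually prove that remaining claim. From the graph, $\Phi_x'f=0$ forces only $f(c_0')=0$, $f(c_{nx,[1:0]}')=0$, and $\sum_{c\in\FF_q}f(c_{(n+1)x,[1:c]}')=0$ (note the neighbours of $c_{nx,[0:1]}'$ are the $c_{(n+1)x,[1:c]}'$ with $c\in\FF_q$, so your displayed sum should run over $w=[1:c]$ only, not over all $w\neq[1:0]$); in particular every value $f(c_{nx,[0:1]}')$ and all but one linear combination of the $f(c_{nx,[1:c]}')$, $c\neq0$, are left completely free, so the kernel of $\Phi_x'$ on $K'$-invariant functions is infinite-dimensional and cuspidality must do all the work. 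Your argument for that is the single sentence ``a suitable combination of these identities expresses each remaining value in terms of values known to be $0$; iterating yields $f\equiv0$,'' which is an assertion, not a proof: it requires computing the unipotent integral $\int_{N(F)\setminus N(\A)}f(up_{nx}\vartheta_w)\,du$ as an explicit finite sum of values of $f$ on the classes of Theorem \ref{thm-ramifiedvertices} (including the bookkeeping at $x$, where $K_x'\subsetneq K_x$) and then verifying that the resulting linear system, combined with the relations above, has only the trivial solution. You acknowledge this is ``the main obstacle,'' but an acknowledged obstacle is still an unproved step. Either carry out that constant-term computation (in the spirit of \cite[Example 5.5.2]{oliver-thesis} and \cite{alvarenga-pereira-23} for the unramified case), or find an a priori reason that a cuspidal eigencharacter cannot kill $\Phi_x'$; as written, the proposal does not establish the corollary for the eigenvalue $0$.
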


\begin{proof}
   The space of cusp forms is a semi-simple $\GL_n(\A)$-module. Then, it is a direct sum of irreducible representations. Multiplicity One Theorem yields that every cusp form can be written as a sum of cusp eigenforms. Since $f \in \cA_{0}^{K'}$ is an $\cH_{K'}$-eigenform if it is $\Phi$-eigenform for every $\Phi \in \cH_{K'}$, the proof follows from the previous corollary. 
\end{proof}

For the unramified version of the above corollary, see \cite[Example 5.5.2]{oliver-thesis} for $\GL_2$ and \cite[Thm. 3.6]{alvarenga-pereira-23} for $\GL_n$, with every $n \geq 2$.

\subsection*{Graphs and eigenspaces of $\Phi_y'$ for $y\neq x$}

Now we are interested in the graphs and the eigenspaces of $\Phi_y'$ for a place $y \neq x$ of degree $d$. According to Theorems \ref{thm-ramifiedvertices} and \ref{thm-ramifiededges}, in order to describe $\cG_{\Phi_y',K'}$ we need to compute the representatives of the classes of $(p_{nx}\vartheta_{w})_x(\xi_{y,w'})_y$ in $G(F) \setminus G(\mathbb{A})/Z K'$ for every $w \in \P^1(\FF_q) $ and $w' \in \P^1(\kappa(y))$.

\begin{lemma} \label{lemma-uni}
Let $x,y \in |\P^1|$ of degrees $1$ and $d$, respectively. Then there exists a uniformizer $\pi_y$ for $y$ such that $v_z(\pi_y)=0$ for every $z \in |\P^1|$ with $z \neq x,y$.
\end{lemma}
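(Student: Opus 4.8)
The plan is to produce the uniformizer $\pi_y$ explicitly as a rational function of $t$. Recall that $x$ is the degree-one place with $\pi_x=t$ (so $x$ is the point $t=0$), that $\infty$ denotes the other rational point, at which $v_\infty(t)=-1$, and that any closed point $z$ of positive degree other than $\infty$ corresponds to a monic irreducible polynomial $P_z\in\Fq[t]$ with $v_z(P_z)=1$ and $v_{z'}(P_z)=0$ for every other finite $z'$. Note that the surrounding discussion assumes $y\neq x$; this hypothesis is in fact necessary, since the degree formula $\sum_z v_z(f)\deg z=0$ for $f\in F^\times$ forces any $\pi_y$ as in the statement to satisfy $v_x(\pi_y)=-d$, which cannot happen when $y=x$. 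So the target is a function with divisor exactly $[y]-d[x]$.

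I would then split into two cases. If $y\neq\infty$, then $y$ corresponds to a monic irreducible $P_y\in\Fq[t]$ of degree $d$ with $P_y(0)\neq 0$ (because $y\neq x$), and I claim $\pi_y:=P_y\,t^{-d}$ works: it is a uniformizer at $y$ since $t$ is a unit there, while $v_x(\pi_y)=0-d=-d$, $v_\infty(\pi_y)=-d-d\cdot(-1)=0$, and $v_z(\pi_y)=0$ at every other place $z$ because both $P_y$ and $t$ are units at such $z$. If instead $y=\infty$ (so $d=1$), take $\pi_y:=t^{-1}$, which is a uniformizer at $\infty$ with $v_x(\pi_y)=-1=-d$ and $v_z(\pi_y)=0$ at all remaining places. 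In either case the verification is a one-line valuation computation.

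Alternatively, and more conceptually, one can argue that the divisor $[y]-d[x]$ on $\P^1$ has degree $0$, hence is principal since $\Pic^0(\P^1)$ is trivial; any $\pi_y\in F^\times$ with $\mathrm{div}(\pi_y)=[y]-d[x]$ then has precisely the required valuations. There is essentially no real obstacle in this lemma; the only points that warrant care are the implicit assumption $y\neq x$ and the fact that $x$ sits at $t=0$ rather than at infinity, which is exactly what forces the correcting factor $t^{-d}$ in the explicit formula.
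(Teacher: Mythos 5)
Your proof is correct, and your primary argument is more explicit than the one in the paper, though your closing ``alternative'' is essentially the paper's proof. The paper argues abstractly: starting from an arbitrary uniformizer $\pi_y'$, it writes $D(\pi_y') = (y - dx) + D'$ with $\deg D' = 0$, invokes the triviality of the divisor class group of $\P^1$ to write $D' = D(f)$, checks $v_y(f)=0$ so that $f$ is a unit at $y$, and takes $\pi_y := \pi_y' f^{-1}$. This is exactly the ``divisor $[y]-d[x]$ is principal'' argument you sketch at the end, just phrased as a correction of a given uniformizer rather than as producing the function from scratch. Your main route instead exhibits $\pi_y$ concretely as $P_y\,t^{-d}$ (or $t^{-1}$ when $y=\infty$) and verifies the valuations directly; this buys complete explicitness, which is actually in the spirit of how the paper later \emph{uses} the lemma (it wants $\pi_x^d = \pi_y^{-1}$ and a concrete $\FF_q$-basis $\{1,\pi_x^{-1},\dots,\pi_x^{-d+1}\}$ of $\kappa(y)$), at the cost of a case split on whether $y$ is the place at infinity. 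Your observation that the statement tacitly requires $y\neq x$ (forced by the degree formula) is a fair and correct remark that the paper does not make explicit, though all of its applications do satisfy $y \neq x$.
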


\begin{proof}
Recall that $F$ is the function field of $\P^1$ i.e.\ the field of rational functions over $\Fq$. Let $\pi_y'$ be a uniformizer for $y$ and consider
$ D(\pi_y') := \sum_{z \in |\mathbb{P}^1|} v_z(\pi_y') z$
be the divisor associated with $\pi_y'$.  Let 
$ D(\pi_y') = D + D'$  with $D :=  y-d x$ and  $D' := d x + \sum_{z \neq y} v_z(\pi_y') z.$
Since, $\deg D' =0$ and the class number of $F$ is $1$, yields $D(\pi_y') - D = D(f)$ for some $f \in F.$ Moreover $v_y(f)=0$ since $D(f) = D'$. Thus, $f \in \mathcal{O}_{F,y}^{\times}$ and $\pi_y' f^{-1}$ is a uniformizer for $y$ with
\[D(\pi_y' f^{-1}) = D(\pi_y') - D(f) = D + D' - D' = y - dx.\]
Therefore for $\pi_y := \pi_y' f^{-1}$, we have that $\pi_y$ has a nontrivial valuation only in $y$ and $x.$
\end{proof}

From the previous lemma, 
we can suppose that $v_z(\pi_y)=0$ for every $ z \neq x,y$.
Then we have $\pi_x^d=\pi_y^{-1}$
 and $\big\{1,\pi_x^{-1}, \ldots ,\pi_x^{-d+1}\big\}$ is a $\FF_q$-bases for the $\FF_q$-vector space $\kappa(y)$. Hence, we can see $\xi_{y,w'}$ as a  $G(F)$ matrix using former basis. With this assumption, we can define $k' \in K'$ being $\id$ at places $x$ and $y$, and being $\xi_{y,w'} \in K_z$ for every other places $z$. Then, by multiplying by $(\xi_{y,w'})^{-1} \in G(F)$ on the left and by $k' \in K'$ on the right, we get to $(\xi_{y,w'}^{-1}p_{nx}\vartheta_w)_x$ and can use Proposition \ref{prop-ramifstrongaprox}.

Thus, we are left to find, according to Theorem \ref{thm-ramifiedvertices}, the standard representative classes of
\[
     \begin{pmatrix} \pi_x^d&b_0+b_1\pi_x+...+b_{d-1}\pi_x^{d-1} \\ &1 \end{pmatrix}p_{nx}\vartheta_w=\begin{pmatrix} \pi_x^{d-n}&b_0+b_1\pi_x+...+b_{d-1}\pi_x^{d-1} \\ &1 \end{pmatrix}\vartheta_w \]
     and
     \[ \begin{pmatrix} 1& \\ &\pi_x^d \end{pmatrix}p_{nx}\vartheta_w=\begin{pmatrix} \pi_x^{-n}& \\ &\pi_x^d \end{pmatrix}\vartheta_w
\]
in $\Gamma_x \setminus G_x/Z_xK_x'$, for every $b_0,b_1,...,b_{d-1} \in \FF_q$, $w \in \P^1(\FF_q)$ and  $n \in \N^{\times}$.


\begin{df} For every $k \in \N$, we define 
\[ \cN_k':=\big\{c_0'\big\}\cup \big\{c_{nx,w}' \;|\; n\in \{1,...,k\},~w\in\P^1(\FF_q) \big\}  \subset \mathrm{Vert}\; \cG_{\Phi_{y}',K'}. \]
We call $\mathfrak{N}_{d}' := \cN_{d}'-\{c_{dx,[1:0]}'\}$ the nucleus of $\cG_{\Phi_y ',K'}$, where $d = |y|$.
\end{df}

\begin{prop} \label{prop-3.16} Let $y \in |\P^1|$ of degree $d$ such that $y\neq x$. Then,
\begin{enumerate}[(i)]

        \item For every $n \geq d$ and $w \in \P^1(\FF_q)$, $c_{nx,w}$ has exactly two $\Phi_{y}'$-neighbors. Namely,  $c_{(n-d)x,w}$ with multiplicity $q^d$ and $c_{(n+d)x,w}$ with multiplicity $1$. \medskip
        
        \item  Every vertex $c_{kx,w}'$ in $\cN_{d}'-\{c_0'\}$ has one, and only one $\Phi_{y}'$-neighbor outside of $\cN_d'$. Namely,  $c_{(k+d)x,w}$ with multiplicity $1$. \medskip
        
        \item  The following equality holds, $m(c_0',c_{dx,[0:1]}')=q$, $m(c_0',c_{dx,[1:0]}')=1$ and every other $\Phi_{y}'$-neighbors of $c_0'$ are in $\cN_d'$.
 
\end{enumerate}
\end{prop}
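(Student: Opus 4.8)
The strategy is entirely parallel to the unramified case (Proposition \ref{prop-edgesnonramified} and Lemma \ref{lemma2.1}), transported through the fiber description $P^{-1}(c_{nx}) = \{(p_{nx}\vartheta_w)_x\}$ of Lemma \ref{lemma3.3} and Theorem \ref{thm-ramifiedvertices}. By Theorem \ref{thm-ramifiededges}(i), the $\Phi_y'$-neighbors of a vertex $g$ are the classes of the $g\xi_{y,w'}$ for $w' \in \P^1(\kappa(y))$, with multiplicities counting coincidences; and by the reduction carried out just before the definition of $\cN_k'$ (using Lemma \ref{lemma-uni} so that $\pi_x^d = \pi_y^{-1}$), these amount to the classes in $\Gamma_x \setminus G_x/Z_xK_x'$ of the matrices
\[
\begin{pmatrix} \pi_x^{d-n} & b_0+b_1\pi_x+\cdots+b_{d-1}\pi_x^{d-1} \\ & 1 \end{pmatrix}\vartheta_w
\qquad\text{and}\qquad
\begin{pmatrix} \pi_x^{-n} & \\ & \pi_x^d \end{pmatrix}\vartheta_w,
\]
ranging over $b_i \in \FF_q$ and $w \in \P^1(\FF_q)$. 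So the whole proof is a bookkeeping exercise: compute these classes using the elementary row/column operations available modulo $\Gamma_x$ (on the left), $Z_x$, and $K_x'$ (on the right), exactly as in Remark \ref{rem-calculation}.

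\emph{Part (i).} For $n \ge d$ I would argue as in the proof of Proposition \ref{prop-edgesnonramified}: factor
\[
\begin{pmatrix} \pi_x^{d-n} & b_0+\cdots+b_{d-1}\pi_x^{d-1} \\ & 1 \end{pmatrix}
= p_{(n-d)x}\begin{pmatrix} 1 & (b_0+\cdots+b_{d-1}\pi_x^{d-1})\pi_x^{n-d} \\ & 1 \end{pmatrix},
\]
and since $n-d \ge 0$ the upper-triangular factor on the right multiplied by $\vartheta_w$ still lies in $K_x'$ after the change of variables is absorbed — here one must check that right-multiplication by $\vartheta_w$ does not disturb the argument, which is where Proposition \ref{prop-ramifequiv} is used: it says that for $n \ge 1$ the question of equivalence of $p_{nx}g$ and $p_{nx}g'$ is the same in $\Gamma_x \setminus G_x/Z_xK_x'$ as in $\Gamma_x \setminus G_x$, so the $K_x'$-orbit of $w$ is all of its $K_x$-class, giving the single neighbor $c_{(n-d)x,w}$. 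Similarly
\[
\begin{pmatrix} \pi_x^{-n} & \\ & \pi_x^d \end{pmatrix}\vartheta_w
= \begin{pmatrix} \pi_x^d & \\ & \pi_x^d \end{pmatrix}\begin{pmatrix} \pi_x^{-(n+d)} & \\ & 1 \end{pmatrix}\vartheta_w
\sim p_{(n+d)x}\vartheta_w
\]
gives the neighbor $c_{(n+d)x,w}$. Multiplicities $q^d$ and $1$ then follow by counting: of the $q^d+1$ incidences (Theorem \ref{thm-ramifiededges}(i)), $q^d$ of them land on $c_{(n-d)x,w}$ (coming from the $q^d$ choices of $(b_0,\dots,b_{d-1})$) and one lands on $c_{(n+d)x,w}$.

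\emph{Parts (ii) and (iii).} Part (ii) is the ramified analogue of Lemma \ref{lemma2.1} combined with Proposition \ref{prop-edgesnonramified}(ii): for $1 \le n \le d$ and any $w$, the matrix $\begin{pmatrix} \pi_x^{-n} & \\ & \pi_x^d \end{pmatrix}\vartheta_w \sim p_{(n+d)x}\vartheta_w$ is the unique neighbor with first index exceeding $d$, hence outside $\cN_d'$, with multiplicity $1$; and the claim that \emph{all} the other neighbors (the ones coming from the $\begin{pmatrix} \pi_x^{d-n} & * \\ & 1 \end{pmatrix}\vartheta_w$ matrices) stay inside $\cN_d'$ is precisely Lemma \ref{lemma2.1} applied coordinate-by-coordinate — the iterative reduction there produces a matrix equivalent to $p_{-m_j x}$ with $|m_j| \le d-1$, and one checks that the same reduction respects the $\vartheta_w$ on the right up to relabeling $w$ (again via Proposition \ref{prop-ramifequiv}). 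The exceptional case in Lemma \ref{lemma2.1} is $n=0$ with all $b_i=0$, which is exactly the content of part (iii): starting from $c_0'$, i.e. $p_0 = \id$, the neighbors are the classes of $\xi_{y,w'}$ read as $G(F)$-matrices, which after the basis change are
\[
\begin{pmatrix} \pi_x^d & b_0+\cdots+b_{d-1}\pi_x^{d-1} \\ & 1 \end{pmatrix}
\qquad\text{and}\qquad
\begin{pmatrix} 1 & \\ & \pi_x^d \end{pmatrix};
\]
the first is $\sim c_{dx,[1:0]}'$ when all $b_i=0$ (multiplicity $1$, the single such tuple), is in $\cN_d'$ for the remaining tuples by Lemma \ref{lemma2.1}, and the second reduces, via the column swap $\begin{pmatrix} & 1 \\ 1 & \end{pmatrix}$ and scaling by $Z_x$, to $p_{dx}\begin{pmatrix} & 1 \\ 1 & \end{pmatrix} \sim c_{dx,[0:1]}'$; the $q$ incidences landing there (accounting for the remaining $q$ of the $q^d+1$ total after subtracting the $q^d-1$ that fall strictly inside $\cN_d'$ and the $1$ on $c_{dx,[1:0]}'$ — wait, bookkeeping: $q^d$ tuples plus $1$ for the diagonal matrix, of which $1$ gives $c_{dx,[1:0]}'$, $q$ give $c_{dx,[0:1]}'$ counting the diagonal one with multiplicity, and the rest are in $\cN_d'$) give $m(c_0',c_{dx,[0:1]}')=q$.

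\emph{Main obstacle.} The only genuinely delicate point is verifying that the left-multiplications by $\vartheta_w$ (and by $\xi_{y,w'}^{-1}$) commute cleanly with the reduction algorithm of Lemma \ref{lemma2.1} and Remark \ref{rem-calculation}, so that a neighbor's $w$-label is well-defined and the multiplicity count is exactly as stated — in particular that no two distinct $(b_0,\dots,b_{d-1},w)$ and diagonal contributions collapse to the same class except in the tallied cases. This is where Proposition \ref{prop-ramifequiv} does the heavy lifting (reducing equivalence in the ramified coset to equivalence in $\Gamma_x \setminus G_x$ whenever $n \ge 1$), and the $n=0$ vertex $c_0'$ must be treated by hand as above, which is why it is the one producing the non-generic multiplicities $q$ and $1$.
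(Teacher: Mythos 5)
Parts (i) and (ii) of your plan follow the paper's proof essentially verbatim (factor out $p_{(n-d)x}$, use the central scaling for the diagonal matrix, and invoke Lemma \ref{lemma2.1} for the claim that the remaining neighbors stay inside), so those are fine. The problem is part (iii), where your identification of the two exceptional neighbors is backwards and the resulting multiplicity count does not close. Concretely: since $\vartheta_{[1:0]}=\id$ and $\vartheta_{[0:1]}=\left(\begin{smallmatrix}&1\\1&\end{smallmatrix}\right)$, one has
\[
\begin{pmatrix}1&\\&\pi_x^d\end{pmatrix}=\pi_x^d\,p_{dx}\sim p_{dx}\vartheta_{[1:0]}=c_{dx,[1:0]}',
\qquad
\begin{pmatrix}\pi_x^d&b_0\\&1\end{pmatrix}=\begin{pmatrix}1&b_0\\&1\end{pmatrix}\begin{pmatrix}\pi_x^d&\\&1\end{pmatrix}\sim
\begin{pmatrix}0&1\\1&0\end{pmatrix}\begin{pmatrix}\pi_x^d&\\&1\end{pmatrix}\pi_x^{-d}=p_{dx}\vartheta_{[0:1]}=c_{dx,[0:1]}'
\]
for \emph{every} $b_0\in\FF_q$, because $\left(\begin{smallmatrix}1&b_0\\&1\end{smallmatrix}\right)$ and the Weyl element lie in $\Gamma_x$ and act on the \emph{left}. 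You assigned the all-zero tuple to $c_{dx,[1:0]}'$ and the diagonal matrix to $c_{dx,[0:1]}'$, i.e.\ exactly the opposite; your reduction of $\left(\begin{smallmatrix}1&\\&\pi_x^d\end{smallmatrix}\right)$ via a ``column swap'' is the root of the slip, since right multiplication by $\left(\begin{smallmatrix}&1\\1&\end{smallmatrix}\right)$ is not allowed in the ramified coset ($K_x'$ contains no such element) — that restriction is precisely what makes the fibers $P^{-1}(c_{nx})$ nontrivial.

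The second, related error is the count of how many tuples escape $\cN_{d-1}'$. The exceptional case of Lemma \ref{lemma2.1} at $n=0$ is $b_1=\cdots=b_{d-1}=0$ with $b_0$ \emph{arbitrary}, not all $b_i=0$: there are $q$ such tuples, and by the computation above all $q$ of them land on $c_{dx,[0:1]}'$, which is where $m(c_0',c_{dx,[0:1]}')=q$ comes from; the single diagonal matrix gives $m(c_0',c_{dx,[1:0]}')=1$, and the remaining $q^d-q$ tuples fall into $\cN_{d-1}'$ by Lemma \ref{lemma2.1}. Your tally ($q^d-1$ inside, one incidence on each exceptional vertex) happens to sum to $q^d+1$ but yields multiplicities $1$ and $1$, contradicting the statement — which is why your parenthetical ``wait, bookkeeping'' aside ends up asserting the answer rather than deriving it. With the two corrections above the argument closes exactly as in the paper.
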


\begin{proof}
    For \textit{(i)}, we first remark that if $n \geq d$,
{ \small    \begin{align*}
\begin{pmatrix} \pi_x^{d-n}&b_0+b_1\pi_x+...+b_{d-1}\pi_x^{d-1} \\ &1 \end{pmatrix}\vartheta_w &= p_{(n-d)x}\begin{pmatrix} 1&(b_0+b_1\pi_x+...+b_{d-1}\pi_x^{d-1})\pi_x^{n-d} \\ &1 \end{pmatrix}\vartheta_w\\
    & \sim p_{(n-d)x}\vartheta_w,  
\end{align*} }
for every $b_0, \ldots, b_{d-1} \in \Fq.$ Moreover 
\[ \begin{pmatrix} \pi_x^{-n}& \\ &\pi_x^d \end{pmatrix}\vartheta_w\sim p_{(n+d)x}\vartheta_w,\] 
which concludes \textit{(i)}. Observe that last equivalence holds for every $n \in \N$. 

For \textit{(ii)}, we use Lemma \ref{lemma2.1} and the fact that 

\[
\begin{pmatrix} \pi_x^{-n}& \\ &\pi_x^d \end{pmatrix}\vartheta_w\sim p_{(n+d)x}\vartheta_w,\] 
for every $n \in \N.$

To conclude \textit{(iii)}, in addition to the Lemma \ref{lemma2.1}, we just have to observe that  
\[\begin{pmatrix} \pi_x^{d}&b_0 \\ &1 \end{pmatrix}\sim c_{dx,[0:1]}'  \quad \text{ and } \quad  
\begin{pmatrix} 1& \\ &\pi_x^d \end{pmatrix}\sim c_{dx,[1:0]}.\]
for every $b_0 \in \Fq.$ 
\end{proof}

Based on the last proposition, in the following Figure \ref{fig6}, we draw the general picture of  $\mathcal{G}_{\Phi_y',K}$, for  $|y|=d$ and $y\neq x$.
The area delimited  by $\cN_{d-1}'$ might have some edges, even if nothing is drawn. We delimited the area given by the nucleus $\mathfrak{N}_{d}'$ as well. The next theorem explains its importance. 


\begin{center}
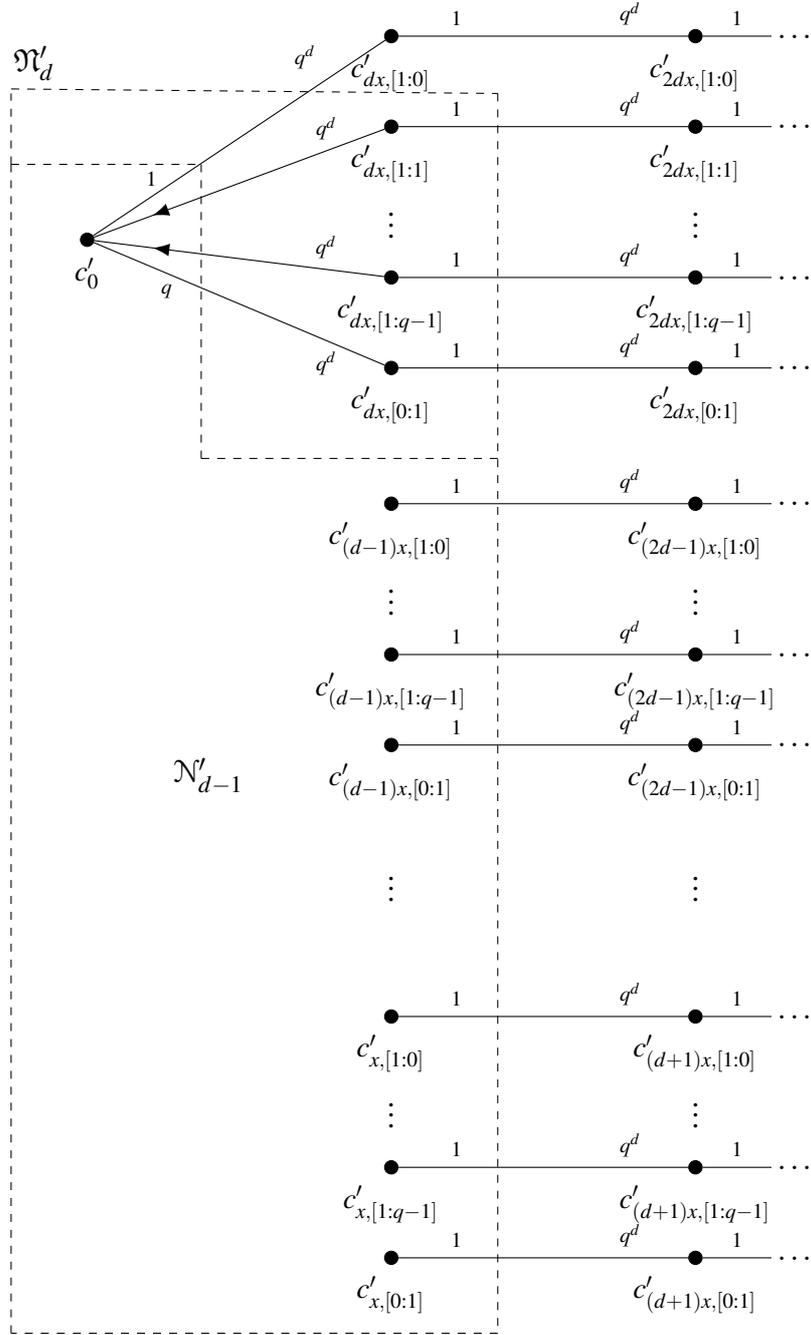
\begin{figure}[h]
\centering
\begin{minipage}[h]{\linewidth}
\[
  \beginpgfgraphicnamed{tikz/fig6}
  \begin{tikzpicture}[>=latex, scale=2]

        \vertex[circle,fill, minimum size = 5pt, label={below:\small $c_{0}'$}](c0) at (0,-.25) {};

      \vertex[circle,fill,minimum size = 5pt, label={below:\small $c_{dx,[1:0]}'$}](cdx10) at (2,1.1) {};
          \vertex[circle,fill,minimum size = 5pt, label={below:\small $c_{dx,[1:1]}'$}](cdx11) at (2,0.5) {}; 
      \node at (2,-.1) {$\vdots$};
      \vertex[circle,fill,minimum size = 5pt, label={below:\small $c_{dx,[1:q-1]}'$}](cdx1q-1) at (2,-.5){};
  \vertex[circle,fill,minimum size = 5pt, label={below :\small $c_{dx,[0:1]}'$}](cdx01) at (2,-1.1) {};

      \vertex[circle,fill,minimum size = 5pt, label={below:\small 
      $c_{(d-1)x,[1:0]}'$}](cd-1x10) at (2,-2) {}; 
      \node at (2,-2.6) {$\vdots$};
      \vertex[circle,fill,minimum size = 5pt, label={below:\small 
      $c_{(d-1)x,[1:q-1]}'$}](cd-1x1q-1) at (2,-3){};
  \vertex[circle,fill,minimum size = 5pt, label={below:\small 
  $c_{(d-1)x,[0:1]}'$}](cd-1x01) at (2,-3.6) {};

   \node at (2,-4.5) {$\vdots$};

      \vertex[circle,fill,minimum size = 5pt, label={below:\small 
      $c_{x,[1:0]}'$}](cx10) at (2,-5.4) {}; 
      \node at (2,-6) {$\vdots$};
      \vertex[circle,fill,minimum size = 5pt, label={below:\small 
      $c_{x,[1:q-1]}'$}](cx1q-1) at (2,-6.4){};
  \vertex[circle,fill,minimum size = 5pt, label={below:\small 
  $c_{x,[0:1]}'$}](cx01) at (2,-7) {};

      \vertex[circle,fill,minimum size = 5pt, label={below=-.1:\small $c_{2dx,[1:0]}'$}](c2dx10) at (4,1.1) {};
           \vertex[circle,fill,minimum size = 5pt, label={below=-.1:\small $c_{2dx,[1:1]}'$}](c2dx11) at (4,.5) {};
      \node at (4,-.1) {$\vdots$};
        \vertex[circle,fill,minimum size = 5pt, label={below:\small $c_{2dx,[1:q-1]}'$}](c2dx1q-1) at (4,-.5){};
  \vertex[circle,fill,minimum size = 5pt, label={below:\small $c_{2dx,[0:1]}'$}](c2dx01) at (4,-1.1) {};

      \vertex[circle,fill,minimum size = 5pt, label={below:\small 
      $c_{(2d-1)x,[1:0]}'$}](c2d-1x10) at (4,-2) {}; 
      \node at (4,-2.6) {$\vdots$};
      \vertex[circle,fill,minimum size = 5pt, label={below:\small 
      $c_{(2d-1)x,[1:q-1]}'$}](c2d-1x1q-1) at (4,-3){};
  \vertex[circle,fill,minimum size = 5pt, label={below:\small 
  $c_{(2d-1)x,[0:1]}'$}](c2d-1x01) at (4,-3.6) {};

   \node at (4,-4.5) {$\vdots$};

      \vertex[circle,fill,minimum size = 5pt, label={below:\small 
      $c_{(d+1)x,[1:0]}'$}](cd+1x10) at (4,-5.4) {}; 
      \node at (4,-6) {$\vdots$};
      \vertex[circle,fill,minimum size = 5pt, label={below:\small 
      $c_{(d+1)x,[1:q-1]}'$}](cd+1x1q-1) at (4,-6.4){};
  \vertex[circle,fill,minimum size = 5pt, label={below:\small 
  $c_{(d+1)x,[0:1]}'$}](cd+1x01) at (4,-7) {};


  \path[-,font=\scriptsize]
   (c0) edge[-=0.8] node[pos=0.2,auto,above]{\tiny $1$}  
   node[pos=0.8,auto,black]{\tiny $q^d$} (cdx10) ;
 
 \path[-,font=\scriptsize] (c0) edge[-=0.8] node[pos=0.25,auto,below]{\tiny $q$} node[pos=0.8,below,black]{\tiny $q^d$}  (cdx01) ;

   \path[-,font=\scriptsize]
   (cdx10) edge[-=0.8] node[pos=0.2,auto,black]{\tiny $1$} 
    node[pos=0.8,auto,black]{\tiny $q^d$}  (c2dx10) ;
     \path[-,font=\scriptsize]
   (cdx11) edge[->-=0.8] node[pos=0.2,above,black]{\tiny $q^d$} 
     (c0) ;
        \path[-,font=\scriptsize]
   (cdx11) edge[-=0.8] node[pos=0.2,auto,black]{\tiny $1$} 
    node[pos=0.8,auto,black]{\tiny $q^d$} (c2dx11);
   \path[-,font=\scriptsize]
   (cdx1q-1) edge[->-=0.8] node[pos=0.2,above,black]{\tiny $q^d$} 
     (c0) ;
      \path[-,font=\scriptsize]
   (cdx1q-1) edge[-=0.8] node[pos=0.2,auto,black]{\tiny $1$} 
    node[pos=0.8,auto,black]{\tiny $q^d$}  (c2dx1q-1) ;
      \path[-,font=\scriptsize]
   (cdx01) edge[-=0.8] node[pos=0.2,auto,black]{\tiny $1$} 
    node[pos=0.8,auto,black]{\tiny $q^d$}  (c2dx01) ;

   \path[-,font=\scriptsize]
   (cd-1x10) edge[-=0.8] node[pos=0.2,auto,black]{\tiny $1$} 
    node[pos=0.8,auto,black]{\tiny $q^d$}  (c2d-1x10) ;
      \path[-,font=\scriptsize]
   (cd-1x1q-1) edge[-=0.8] node[pos=0.2,auto,black]{\tiny $1$} 
    node[pos=0.8,auto,black]{\tiny $q^d$}  (c2d-1x1q-1) ;
      \path[-,font=\scriptsize]
   (cd-1x01) edge[-=0.8] node[pos=0.2,auto,black]{\tiny $1$} 
    node[pos=0.8,auto,black]{\tiny $q^d$}  (c2d-1x01) ;

       \path[-,font=\scriptsize]
   (cx10) edge[-=0.8] node[pos=0.2,auto,black]{\tiny $1$} 
    node[pos=0.8,auto,black]{\tiny $q^d$}  (cd+1x10) ;
      \path[-,font=\scriptsize]
   (cx1q-1) edge[-=0.8] node[pos=0.2,auto,black]{\tiny $1$} 
    node[pos=0.8,auto,black]{\tiny $q^d$}  (cd+1x1q-1) ;
      \path[-,font=\scriptsize]
   (cx01) edge[-=0.8] node[pos=0.2,auto,black]{\tiny $1$} 
    node[pos=0.8,auto,black]{\tiny $q^d$}  (cd+1x01) ;

   \path[-,font=\scriptsize]
   (c2dx10) edge[-=0.8] node[pos=0.5,auto,black]{\tiny $1$}  (4.5,1.1) ;
      \path[-,font=\scriptsize]
   (c2dx11) edge[-=0.8] node[pos=0.5,auto,black]{\tiny $1$}  (4.5,0.5) ;
      \path[-,font=\scriptsize]
   (c2dx1q-1) edge[-=0.8] node[pos=0.5,auto,black]{\tiny $1$} (4.5,-0.5) ;
      \path[-,font=\scriptsize]
   (c2dx01) edge[-=0.8] node[pos=0.5,auto,black]{\tiny $1$} (4.5,-1.1) ;

\node at (4.65,1.1) {$\ldots$};
\node at (4.65,0.5) {$\ldots$};
\node at (4.65,-.5) {$\ldots$};
\node at (4.65,-1.1) {$\ldots$};


   \path[-,font=\scriptsize]
   (c2d-1x10) edge[-=0.8] node[pos=0.5,auto,black]{\tiny $1$}  (4.5,-2) ;
      \path[-,font=\scriptsize]
   (c2d-1x1q-1) edge[-=0.8] node[pos=0.5,auto,black]{\tiny $1$} (4.5,-3) ;
      \path[-,font=\scriptsize]
   (c2d-1x01) edge[-=0.8] node[pos=0.5,auto,black]{\tiny $1$} (4.5,-3.6) ;

\node at (4.65,-2) {$\ldots$};
\node at (4.65,-3) {$\ldots$};
\node at (4.65,-3.6) {$\ldots$};


   \path[-,font=\scriptsize]
   (cd+1x10) edge[-=0.8] node[pos=0.5,auto,black]{\tiny $1$}  (4.5,-5.4) ;
      \path[-,font=\scriptsize]
   (cd+1x1q-1) edge[-=0.8] node[pos=0.5,auto,black]{\tiny $1$} (4.5,-6.4) ;
      \path[-,font=\scriptsize]
   (cd+1x01) edge[-=0.8] node[pos=0.5,auto,black]{\tiny $1$} (4.5,-7) ;

\node at (4.65,-5.4) {$\ldots$};
\node at (4.65,-6.4) {$\ldots$};
\node at (4.65,-7) {$\ldots$};


\node[auto,above] at (.8,-4) { $\mathcal{N}_{d-1}'$};
\node[auto,above] at (-0.35,.73) { $\mathfrak{N}_d'$};

\draw[dashed]  (2.7,0.72)  -- (2.7,-7.5); 
\draw[dashed]  (2.7,-7.5)  -- (-0.5,-7.5); 
\draw[dashed]  (-.5,-7.5) -- (-.5,.25); 
\draw[dashed]  (-.5,.25) -- (-.5,.72);
\draw[dashed]  (-.5,.75) -- (2.7,0.72); 
\draw[dashed] (-.5,.25) -- (.75, .25);
\draw[dashed]  (.75, .25) -- (0.75,-1.7);
\draw[dashed]  (.75, -1.7) -- (2.7,-1.7);


   \end{tikzpicture}
 \endpgfgraphicnamed
\]
\end{minipage} 
 \caption{The general graph $\mathcal{G}_{\Phi_y',K}$, for  $|y|=d$ and $y\neq x$.}
  \label{fig6}
\end{figure}

\end{center}

\begin{thm} \label{thm-mainramified2}
    Let $y \in |\P^1$ of degree $d$ and $\lambda \in \C^{\times}$. 
The map 
\[\begin{array}{cccc}
       \cA^{K'}(\Phi_y', \lambda) & \longrightarrow & \bigoplus_{\mathfrak{N}_{d}'}\C \\[4pt]
   f & \longmapsto & (f(g))_{g \in \mathfrak{N}_{d}'} \\
\end{array}\] is an isomorphism of $\C$-vector spaces. In particular, $\dim \cA^{K'}(\Phi_y', \lambda) = d(q+1)$.

\end{thm}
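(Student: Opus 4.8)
The plan is to repeat, \emph{mutatis mutandis}, the argument of Theorem~\ref{thm-mainunramified}, with the graph of Proposition~\ref{prop-edgesnonramified} replaced by the graph $\cG_{\Phi_y',K'}$ described in Proposition~\ref{prop-3.16}. The map in the statement is the restriction of an automorphic form to the finite vertex set $\mathfrak{N}_d'$, so it is $\C$-linear, and what must be shown is that it is bijective; granting that, $\dim\cA^{K'}(\Phi_y',\lambda)=\#\mathfrak{N}_d'=\#\cN_d'-1=d(q+1)$ because $\#\P^1(\FF_q)=q+1$. The differences from the unramified situation are that the vertices other than $c_0'$ now come in $q+1$ parallel families indexed by $w\in\P^1(\FF_q)$, and that the nucleus has the slightly asymmetric shape $\mathfrak{N}_d'=\cN_d'\setminus\{c_{dx,[1:0]}'\}$.

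For injectivity I would show that the values of an eigenform $f\in\cA^{K'}(\Phi_y',\lambda)$ on $\mathfrak{N}_d'$ propagate to all vertices. First, the eigenvalue equation at $c_0'$ together with Proposition~\ref{prop-3.16}(iii) — every neighbour of $c_0'$ lies in $\cN_d'$ and $c_{dx,[1:0]}'$ occurs there with multiplicity $1$ — expresses $f(c_{dx,[1:0]}')$ as an explicit linear combination of the values on $\mathfrak{N}_d'$, so $f$ is determined on $\cN_d'$. Next, for each $c_{kx,w}'\in\cN_d'\setminus\{c_0'\}$, the eigenvalue equation at that vertex together with Proposition~\ref{prop-3.16}(ii) (its only neighbour outside $\cN_d'$ is $c_{(k+d)x,w}'$, with multiplicity $1$) determines $f(c_{(k+d)x,w}')$ from $f|_{\cN_d'}$; letting $(k,w)$ range over $\{1,\dots,d\}\times\P^1(\FF_q)$ pins down $f$ on $\cN_{2d}'$. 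Finally, Proposition~\ref{prop-3.16}(i) yields, for $n\ge d$ and $w\in\P^1(\FF_q)$, the two-term relation
\[ f(c_{(n+d)x,w}')=\lambda f(c_{nx,w}')-q^{d}f(c_{(n-d)x,w}'), \]
which, read as a recursion in $n$ along each family, determines the remaining values inductively; I would organise this exactly as the inductive step of Theorem~\ref{thm-mainunramified} (two eigenforms agreeing on $\cN_{(k+1)d-1}$ agree on $\cN_{(k+2)d-1}$).

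For surjectivity I would reverse the construction: starting from an arbitrary function on $\mathfrak{N}_d'$, define $f(c_{dx,[1:0]}')$ by solving the $c_0'$-equation, then $f(c_{(k+d)x,w}')$ for $c_{kx,w}'\in\cN_d'\setminus\{c_0'\}$ by solving that vertex's equation, and then all further values by iterating the displayed recursion; this is unambiguous since each value is expressed through previously defined ones. By construction $\Phi_y'(f)=\lambda f$ at every vertex, because Proposition~\ref{prop-3.16} lists \emph{all} edges of $\cG_{\Phi_y',K'}$ and we have imposed precisely the matching equations; smoothness, $K$-finiteness, $\cH$-finiteness and left $G(F)$-invariance are handled as in Theorem~\ref{thm-mainunramified}, so the only point requiring an argument is moderate growth. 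For that, along each family the values obey a linear recursion with characteristic polynomial $T^{2}-\lambda T+q^{d}$, whose roots have product $q^{d}$ and sum $\lambda$ and are therefore bounded in absolute value independently of $n$; since $\|c_{nx,w}'\|=q^{n}$, an estimate $|f(c_{nx,w}')|\le C\,\|c_{nx,w}'\|^{N}$ follows for suitable $C,N$. Hence $f\in\cA^{K'}(\Phi_y',\lambda)$ and restricts to the chosen datum, so the map is an isomorphism and the dimension is $d(q+1)$.

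The only subtlety I anticipate is the bookkeeping around the vertex $c_{dx,[1:0]}'$, which lies in $\cN_d'$ but is excluded from the nucleus: one must check that the $c_0'$-equation genuinely \emph{solves} for $f(c_{dx,[1:0]}')$ — that is, that $c_{dx,[1:0]}'$ is the unique neighbour of $c_0'$ outside $\mathfrak{N}_d'$ and appears with a unit multiplicity (namely $1$) — so that this vertex is determined by the nucleus rather than being an additional free parameter. This is precisely what Proposition~\ref{prop-3.16}(iii) provides; with it in hand the remainder is the same propagation as in the unramified case, now run in parallel over the $q+1$ families.
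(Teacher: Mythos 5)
Your proposal is correct and follows essentially the same route as the paper's proof: restriction to the nucleus, injectivity by propagating values outward using Proposition~\ref{prop-3.16} (solving the $c_0'$-equation for $f(c_{dx,[1:0]}')$, then the equations at $c_{kx,w}'$ level by level), and surjectivity by running the same recursion in reverse and checking moderate growth. A minor point in your favour: your two-term recursion correctly carries the coefficient $q^{d}$, where the paper's displayed formula $f(c_{nx,w}'):=\lambda f(c_{(n-d)x,w}')-qf(c_{(n-2d)x,w}')$ has an apparent typo ($q$ for $q^{d}$).
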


\begin{proof} 
 Let $k\geq 0$ and $f,f' \in \cA^{K'}(\Phi_y', \lambda)$ be such that $f|_{\cN_{(k+1)d}'}=f'|_{\cN_{(k+1)d}'}$. 
 
 Let $w \in \P^1$ and $l \in \{1,...,d\}$. According to the last proposition, the $\Phi_y'$-neighbors of $c_{(kd+l)x,w}'$ are all in $\cN_{kd+l}'$, except for $c_{((k+1)d+l)x,w}'$. Thus, since $f$ is an $\Phi_y'$-eigenform, 
\[\lambda f(c_{(kd+l)x,w}')=\Phi_y(f)(c_{(kd+l)x,w}')= f(c_{((k+1)d+l)x,w}')-\sum_{g \in \cN_{kd+l}'}m(c_{(kd+l)x,w}',g)f(g)\]
 and the same holds for $f'$. 
By assumption $f|_{\cN_{(k+1)d}'}=f'|_{\cN_{(k+1)d}'}$, which implies  $f|_{\cN_{kd+l}'}=f'|_{\cN_{kd+l}'}$. Thus, $f(c_{((k+1)d+l)x})=f'(c_{((k+1)d+l)x})$ since the graph $\cG_{\Phi_{y'},K'}$ does not depend on the automorphic form under consideration.
Everything above is true for every $l \in \{1,...,d\}$ and for every $w \in \P^1$, thus $f|_{\cN_{(k+2)d}'}=f'|_{\cN_{(k+2)d}'}$. Therefore, we conclude by induction that $f|_{\cN_{d}'}=f'|_{\cN_{d}}$ yields $f=f'$.
 
 Moreover, according to the Lemma \ref{lemma2.1}, 
 \[ \lambda f(c_0')=\Phi_y'(f)(c_0')=qf(c_{dx,[0:1]}')+f(c_{dx,[1:0]}')+\sum_{g \in \cN_{d-1}'}m(c_0',g),\]
 hence $f|_{\mathfrak{N}_{d}'}=f'|_{\mathfrak{N}_{d}'}$ yields $f=f'$, which is what we needed for the injectivity.

For the surjectivity, we must show that we can find $f \in \cA^{K'}(\Phi_y', \lambda)$ with fixed values on $\mathfrak{N}_{d}'$.
Hence, suppose that $f$ is a function defined on $\mathfrak{N}_{d}'$.  The goal is to extend $f$ to $G(F) \setminus G(\mathbb{A})/Z K'$ as an $\Phi_{y}'$-eigenform.
By  Proposition \ref{prop-3.16}, if we define
\[f(c_{dx,[1:0]}'):=\lambda f(c_0')-qf(c_{dx,[0:1]}')-\sum_{g \in \cN_{d-1}'}m(c_0',g)\]
then $\Phi_y'(f)(c_{0}')=\lambda f(c_{0}')$. At this point $f$ is defined on $\cN_d'$.
Next, for every $k \in \{1,...,d-1\}$ and $w \in \P^1$, we define 
\[ f(c_{(k+d)x,w}'):=\lambda f(c_{kx,w}')-\sum_{g \in \cN_{d-1}'}m(c_0',g)\]
which yields $\Phi_y'(f)(c_{kx,w}')=\lambda f(c_{kx,w}')$ by  Proposition \ref{prop-3.16}.

If $n \geq 2d$ we just define by induction $f(c_{nx,w}'):=\lambda f(c_{(n-d)x,w}')-qf(c_{(n-2d)x,w}')$.
The  moderate growth condition is satisfied and $f$ is, by construction, an element of $\cA^{K'}(\Phi_y', \lambda)$. This concludes the proof. 
\end{proof}

Followingly, we describe the full graphs $\cG_{\Phi_y',K'}$ for $|y|=1,2,3$ by using methods explained in Remark \ref{rem-calculation}.


\begin{center}

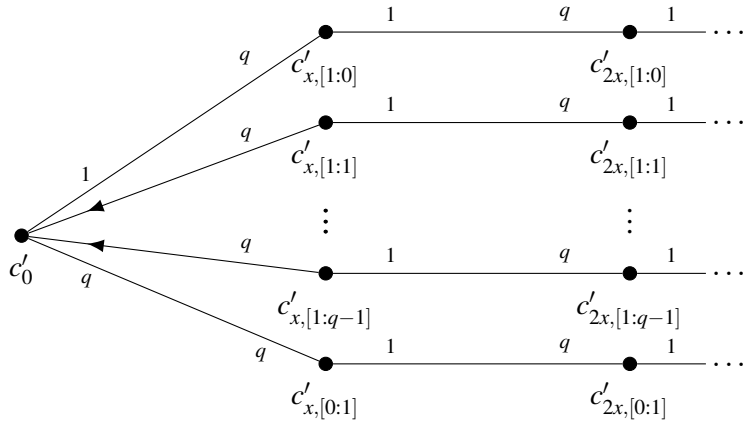
\begin{figure}[h]
\centering
\begin{minipage}[h]{\linewidth}
\[
  \beginpgfgraphicnamed{tikz/fig7}
  \begin{tikzpicture}[>=latex, scale=2]

        \vertex[circle,fill, minimum size = 5pt, label={below:\small $c_{0}'$}](c0) at (0,-.25) {};

      \vertex[circle,fill,minimum size = 5pt, label={below:\small $c_{x,[1:0]}'$}](cx10) at (2,1.1) {};
          \vertex[circle,fill,minimum size = 5pt, label={below:\small $c_{x,[1:1]}'$}](cx11) at (2,0.5) {}; 
      \node at (2,-.1) {$\vdots$};
      \vertex[circle,fill,minimum size = 5pt, label={below:\small $c_{x,[1:q-1]}'$}](cx1q-1) at (2,-.5){};
  \vertex[circle,fill,minimum size = 5pt, label={below :\small $c_{x,[0:1]}'$}](cx01) at (2,-1.1) {};
      \vertex[circle,fill,minimum size = 5pt, label={below:\small $c_{2x,[1:0]}'$}](c2x10) at (4,1.1) {};
          \vertex[circle,fill,minimum size = 5pt, label={below:\small $c_{2x,[1:1]}'$}](c2x11) at (4,0.5) {}; 
      \node at (2,-.1) {$\vdots$};
      \vertex[circle,fill,minimum size = 5pt, label={below:\small $c_{2x,[1:q-1]}'$}](c2x1q-1) at (4,-.5){};
  \vertex[circle,fill,minimum size = 5pt, label={below :\small $c_{2x,[0:1]}'$}](c2x01) at (4,-1.1) {};
   
\node at (2,-.1) {$\vdots$};


  \path[-,font=\scriptsize]
   (c0) edge[-=0.8] node[pos=0.2,auto,above]{\tiny $1$} 
    node[pos=0.8,auto,black]{\tiny $q$}  (cx10) ;
    \path[-,font=\scriptsize] (cx11) edge[->-=0.8] node[pos=0.25,auto,above]{\tiny $q$} 
    node[pos=0.8,auto,black]{}  (c0) ;
 \path[-,font=\scriptsize] (cx1q-1) edge[->-=0.8] node[pos=0.25,auto,above]{\tiny $q$} 
    node[pos=0.8,auto,black]{}  (c0) ;
  \path[-,font=\scriptsize]  (c0) edge[-=0.8] node[pos=0.2,auto,below]{\tiny $q$} 
    node[pos=0.8,below,black]{\tiny $q$}  (cx01) ;

   \path[-,font=\scriptsize]
   (cx10) edge[-=0.8] node[pos=0.2,auto,black]{\tiny $1$} 
    node[pos=0.8,auto,black]{\tiny $q$}  (c2x10) ;
       \path[-,font=\scriptsize]
   (cx11) edge[-=0.8] node[pos=0.2,auto,black]{\tiny $1$} 
    node[pos=0.8,auto,black]{\tiny $q$}  (c2x11) ;
      \path[-,font=\scriptsize]
   (cx1q-1) edge[-=0.8] node[pos=0.2,auto,black]{\tiny $1$} 
    node[pos=0.8,auto,black]{\tiny $q$}  (c2x1q-1) ;
      \path[-,font=\scriptsize]
   (cx01) edge[-=0.8] node[pos=0.2,auto,black]{\tiny $1$} 
    node[pos=0.8,auto,black]{\tiny $q$}  (c2x01) ;

   \path[-,font=\scriptsize]
   (c2x10) edge[-=0.8] node[pos=0.5,auto,black]{\tiny $1$}  (4.5,1.1) ;
     \path[-,font=\scriptsize]
   (c2x11) edge[-=0.8] node[pos=0.5,auto,black]{\tiny $1$}  (4.5,0.5) ; 
      \path[-,font=\scriptsize]
   (c2x1q-1) edge[-=0.8] node[pos=0.5,auto,black]{\tiny $1$} (4.5,-0.5) ;
      \path[-,font=\scriptsize]
   (c2x01) edge[-=0.8] node[pos=0.5,auto,black]{\tiny $1$} (4.5,-1.1) ;
    
\node at (4.65,1.1) {$\ldots$};
\node at (4.65,0.5) {$\ldots$};
\node at (4.65,-.5) {$\ldots$};
\node at (4.65,-1.1) {$\ldots$};

\node at (4,-.1) {$\vdots$};

   \end{tikzpicture}
 \endpgfgraphicnamed
\]
\end{minipage} 
 \caption{The graph $\mathcal{G}_{\Phi_y',K}$ for $|y|=1$.}
  \label{fig7}
\end{figure}

\end{center}


\begin{center}

\begin{figure}[h]
\centering
\begin{minipage}[h]{\linewidth}
\[
  \beginpgfgraphicnamed{tikz/fig8}
  \begin{tikzpicture}[>=latex, scale=2]

        \vertex[circle,fill, minimum size = 5pt, label={below:\small $c_{0}'$}](c0) at (0,-.25) {};

      \vertex[circle,fill,minimum size = 5pt, label={below:\small $c_{2x,[1:0]}'$}](c2x10) at (2,1.1) {};

      \vertex[circle,fill,minimum size = 5pt, label={below:\small $c_{2x,[1:1]}'$}](c2x11) at (2,.5) {};
      \node at (2,-.1) {$\vdots$};
        \vertex[circle,fill,minimum size = 5pt, label={below, right:\small $c_{2x,[1:q-1]}'$}](c2x1q-1) at (2,-.5){};
  \vertex[circle,fill,minimum size = 5pt, label={below, right:\small $c_{2x,[0:1]}'$}](c2x01) at (2,-1.1) {};

      \vertex[circle,fill,minimum size = 5pt, label={[label distance=-0.2cm] below right: \small $c_{x,[1:0]}'$}](cx10) at (2,-2) {};
      \node at (2,-2.5) {$\vdots$};
        \vertex[circle,fill,minimum size = 5pt, label={[label distance=-0.2cm] below right:\small $c_{x,[1:q-1]}'$}](cx1q-1) at (2,-3){};
  \vertex[circle,fill,minimum size = 5pt, label={[label distance=-0.2cm] below right:\small $c_{x,[0:1]}'$}](cx01) at (2,-3.6) {};
  
      \vertex[circle,fill,minimum size = 5pt, label={below=-.1:\small $c_{4x,[1:0]}'$}](c4x10) at (4,1.1) {};
        \vertex[circle,fill,minimum size = 5pt, label={below=-.1:\small $c_{4x,[1:1]}'$}](c4x11) at (4,0.5) {};
       \node at (4,-.1) {$\vdots$};
        \vertex[circle,fill,minimum size = 5pt, label={below:\small $c_{4x,[1:q-1]}'$}](c4x1q-1) at (4,-.5){};
  \vertex[circle,fill,minimum size = 5pt, label={below:\small $c_{4x,[0:1]}'$}](c4x01) at (4,-1.1) {};

        \vertex[circle,fill,minimum size = 5pt, label={below=-.1:\small $c_{3x,[1:0]}'$}](c3x10) at (4,-2) {};
           \node at (4,-2.55) {$\vdots$};
        \vertex[circle,fill,minimum size = 5pt, label={below:\small $c_{3x,[1:q-1]}'$}](c3x1q-1) at (4,-3){};
  \vertex[circle,fill,minimum size = 5pt, label={below:\small $c_{3x,[0:1]}'$}](c3x01) at (4,-3.6) {};


  \path[-,font=\scriptsize]
   (c0) edge[-=0.8] node[pos=0.2,auto,above]{\tiny $1$} 
    node[pos=0.8,auto,black]{\tiny $q^2$}  (c2x10) ;

 \path[-,font=\scriptsize] (c0) edge[-=0.8] node[pos=0.25,below,black]{\tiny $q$} node[pos=0.8,below,black]{\tiny $q^2$}  (c2x01) ;

\draw[black] (-0.2,-0.25) circle  (0.2cm) node at (-0.7,-0.25) {\tiny $q^2-q$};

   \path[-,font=\scriptsize]
   (c2x10) edge[-=0.8] node[pos=0.2,auto,black]{\tiny $1$} 
    node[pos=0.8,auto,black]{\tiny $q^2$}  (c4x10) ;

       \path[-,font=\scriptsize]
   (c2x11) edge[->-=0.8] 
    node[pos=0.2,above,black]{\tiny $q^2$}  (c0) ;

           \path[-,font=\scriptsize]
   (c2x11) edge[-=0.8] node[pos=0.2,auto,black]{\tiny $1$} 
    node[pos=0.8,above,black]{\tiny $q^2$}   (c4x11) ;
     
      \path[-,font=\scriptsize]
   (c2x1q-1) edge[-=0.8] node[pos=0.2,auto,black]{\tiny $1$} 
    node[pos=0.8,above,black]{\tiny $q^2$}  (c4x1q-1) ;

          \path[-,font=\scriptsize]
   (c2x1q-1) edge[->-=0.8] node[pos=0.2,above,black]{\tiny $q^2$}  (c0) ;
    
      \path[-,font=\scriptsize]
   (c2x01) edge[-=0.8] node[pos=0.2,above,black]{\tiny $1$} 
    node[pos=0.8,above,black]{\tiny $q^2$}  (c4x01) ;


   \path[-,font=\scriptsize]
   (cx10) edge[-=0.8] node[pos=0.2,auto,black]{\tiny $1$} 
    node[pos=0.8,auto,black]{\tiny $q^2$}  (c3x10) ;

 \draw[black] (1.8,-2) circle  (0.2cm) node at (1.5,-2) {\tiny $q^2$};
     
      \path[-,font=\scriptsize]
   (cx1q-1) edge[-=0.8] node[pos=0.2,auto,black]{\tiny $1$} 
    node[pos=0.8,auto,black]{\tiny $q^2$}  (c3x1q-1) ;

 \draw[black] (1.8,-3) circle  (0.2cm) node at (1.5,-3) {\tiny $q^2$};
    
      \path[-,font=\scriptsize]
   (cx01) edge[-=0.8] node[pos=0.2,auto,black]{\tiny $1$} 
    node[pos=0.8,auto,black]{\tiny $q^2$}  (c3x01) ;

 \draw[black] (1.8,-3.5) circle  (0.2cm) node at (1.5,-3.5) {\tiny $q^2$};

   \path[-,font=\scriptsize]
   (c4x10) edge[-=0.8] node[pos=0.5,auto,black]{\tiny $1$}  (4.5,1.1) ;
      \path[-,font=\scriptsize]
   (c4x11) edge[-=0.8] node[pos=0.5,auto,black]{\tiny $1$}  (4.5,0.5) ;
  \path[-,font=\scriptsize]
   (c4x1q-1) edge[-=0.8] node[pos=0.5,auto,black]{\tiny $1$} (4.5,-0.5) ;
     \path[-,font=\scriptsize]
   (c4x01) edge[-=0.8] node[pos=0.5,auto,black]{\tiny $1$} (4.5,-1.1) ;

   \path[-,font=\scriptsize]
   (c3x10) edge[-=0.8] node[pos=0.5,auto,black]{\tiny $1$}  (4.5,-2) ;
  \path[-,font=\scriptsize]
   (c3x1q-1) edge[-=0.8] node[pos=0.5,auto,black]{\tiny $1$} (4.5,-3) ;
     \path[-,font=\scriptsize]
   (c3x01) edge[-=0.8] node[pos=0.5,auto,black]{\tiny $1$} (4.5,-3.6) ;    
\node at (4.65,0.5) {$\ldots$};
\node at (4.65,1.1) {$\ldots$};
\node at (4.65,-.5) {$\ldots$};
\node at (4.65,-1.1) {$\ldots$};

\node at (4.65,-2) {$\ldots$};
\node at (4.65,-3) {$\ldots$};
\node at (4.65,-3.6) {$\ldots$};

   \end{tikzpicture}
 \endpgfgraphicnamed
\]
\end{minipage} 
 \caption{The graph $\mathcal{G}_{\Phi_y',K}$ for $|y|=2$.}
  \label{fig8}
\end{figure}

\end{center}


\begin{center}
\begin{figure}[h]
\centering
\begin{minipage}[h]{\linewidth}
\[
  \beginpgfgraphicnamed{tikz/fig9}
  \begin{tikzpicture}[>=latex, scale=2]

        \vertex[circle,fill, minimum size = 5pt, label={below:\small $c_{0}'$}](c0) at (0,-.25) {};

      \vertex[circle,fill,minimum size = 5pt, label={below:\small $c_{3x,[1:0]}'$}](c3x10) at (-2,1.1) {};

      \vertex[circle,fill,minimum size = 5pt, label={below:\small $c_{3x,[1:1]}'$}](c3x11) at (-2,.5) {};
      \node at (2,-.1) {$\vdots$};
        \vertex[circle,fill,minimum size = 5pt, label={below, right:\small $c_{3x,[1:q-1]}'$}](c3x1q-1) at (-2,-.5){};
  \vertex[circle,fill,minimum size = 5pt, label={below, right:\small $c_{3x,[0:1]}'$}](c3x01) at (-2,-1.1) {};

     
      \vertex[circle,fill,minimum size = 5pt, label={below right:\small $c_{x,[1:0]}'$}](cx10) at (2,1.1) {};

      \vertex[circle,fill,minimum size = 5pt, label={below right:\small $c_{x,[1:1]}'$}](cx11) at (2,.5) {};
      \node at (2,-.1) {$\vdots$};
        \vertex[circle,fill,minimum size = 5pt, label={below right:\small $c_{x,[1:q-1]}'$}](cx1q-1) at (2,-.5){};
  \vertex[circle,fill,minimum size = 5pt, label={below  right:\small $c_{x,[0:1]}'$}](cx01) at (2,-1.1) {};

      \vertex[circle,fill,minimum size = 5pt, label={below right :\small $c_{2x,[1:0]}'$}](c2x10) at (2,-2) {};

      \vertex[circle,fill,minimum size = 5pt, label={below right:\small $c_{2x,[1:1]}'$}](c2x11) at (2,-2.6) {};
      \node at (2,-3.2) {$\vdots$};
        \vertex[circle,fill,minimum size = 5pt, label={below right:\small $c_{2x,[1:q-1]}'$}](c2x1q-1) at (2,-3.6){};
  \vertex[circle,fill,minimum size = 5pt, label={below  right:\small $c_{2x,[0:1]}'$}](c2x01) at (2,-4.2) {};

      \vertex[circle,fill,minimum size = 5pt, label={below=-.1:\small $c_{4x,[1:0]}'$}](c4x10) at (4,1.1) {};
        \vertex[circle,fill,minimum size = 5pt, label={below=-.1:\small $c_{4x,[1:1]}'$}](c4x11) at (4,0.5) {};
       \node at (4,-.1) {$\vdots$};
        \vertex[circle,fill,minimum size = 5pt, label={below:\small $c_{4x,[1:q-1]}'$}](c4x1q-1) at (4,-.5){};
  \vertex[circle,fill,minimum size = 5pt, label={below:\small $c_{4x,[0:1]}'$}](c4x01) at (4,-1.1) {};

        \vertex[circle,fill,minimum size = 5pt, label={below=-.1:\small $c_{5x,[1:0]}'$}](c5x10) at (4,-2) {};
           \vertex[circle,fill,minimum size = 5pt, label={below=-.1:\small $c_{5x,[1:1]}'$}](c5x11) at (4,-2.6) {};
           \node at (4,-3.2) {$\vdots$};
        \vertex[circle,fill,minimum size = 5pt, label={below:\small $c_{5x,[1:q-1]}'$}](c5x1q-1) at (4,-3.6){};
  \vertex[circle,fill,minimum size = 5pt, label={below:\small $c_{5x,[0:1]}'$}](c5x01) at (4,-4.2) {};


  \path[-,font=\scriptsize]
   (c0) edge[-=0.8] node[pos=0.2,auto,above]{\tiny $1$} 
    node[pos=0.75,above,black]{\tiny $q^3$}  (c3x10) ;

 \path[-,font=\scriptsize] (c0) edge[-=0.8] node[pos=0.25,below,black]{\tiny $q$} node[pos=0.8,below,black]{\tiny $q^3$}  (c3x01) ;

 \path[-,font=\scriptsize] (c0) edge[-=0.8] node[pos=0.2,above=.1,black]{\tiny $q^2-q$} node[pos=0.8,above=.1,black]{\tiny $q^3-q^2$}  (cx10) ;

  \path[-,font=\scriptsize] (c0) edge[-=0.8] node[pos=0.25,below=.1,black]{\tiny $q^3-q^2$} node[pos=0.8,below,black]{\tiny $q^3-q^2$}  (cx01) ;


   \path[-,font=\scriptsize]
   (c3x11) edge[->-=0.8] node[pos=0.2,above=.01,black]{\tiny $q^3$} (c0);
   \path[-,font=\scriptsize]
   (c3x1q-1) edge[->-=0.8] node[pos=0.2,above=.01,black]{\tiny $q^3$} (c0);

      \path[-,font=\scriptsize]
   (c3x10) edge[-=0.8] node[pos=0.5,above,black]{\tiny $1$}  (-2.5,1.1) ;
      \path[-,font=\scriptsize]
   (c3x11) edge[-=0.8] node[pos=0.5,above,black]{\tiny $1$}  (-2.5,0.5) ;
  \path[-,font=\scriptsize]
   (c3x1q-1) edge[-=0.8] node[pos=0.5,above,black]{\tiny $1$} (-2.5,-0.5) ;
     \path[-,font=\scriptsize]
   (c3x01) edge[-=0.8] node[pos=0.5,above,black]{\tiny $1$} (-2.5,-1.1) ;

\node at (-2.65,1.1) {$\ldots$};
\node at (-2.65,0.5) {$\ldots$};
\node at (-2.65,-0.5) {$\ldots$};
\node at (-2.65,-1.1) {$\ldots$};

   \path[-,font=\scriptsize]
   (cx11) edge[->-=0.8] node[pos=0.15,below,black]{\tiny $q^3-q^2$}  (c0) ;
      \path[-,font=\scriptsize]
   (cx1q-1) edge[->-=0.8] node[pos=0.18,above=-.1,black]{\tiny $q^3-q^2$}  (c0) ;

\path[-,font=\scriptsize]
   (cx10) edge[-=0.8] node[pos=0.2,auto,black]{\tiny $1$} 
    node[pos=0.8,above,black]{\tiny $q^3$}   (c4x10) ;
\path[-,font=\scriptsize]
   (cx11) edge[-=0.8] node[pos=0.2,auto,black]{\tiny $1$} 
    node[pos=0.8,above,black]{\tiny $q^3$}   (c4x11) ;
     \path[-,font=\scriptsize]
   (cx1q-1) edge[-=0.8] node[pos=0.2,auto,black]{\tiny $1$} 
    node[pos=0.8,above,black]{\tiny $q^3$}  (c4x1q-1) ;
    \path[-,font=\scriptsize]
   (cx01) edge[-=0.8] node[pos=0.2,auto,black]{\tiny $1$} 
    node[pos=0.8,above,black]{\tiny $q^3$}   (c4x01) ;

\path[-,font=\scriptsize]
   (c2x10) edge[-=0.8] node[pos=0.2,auto,black]{\tiny $1$} 
    node[pos=0.8,above,black]{\tiny $q^3$}   (c5x10) ;
\path[-,font=\scriptsize]
   (c2x11) edge[-=0.8] node[pos=0.2,auto,black]{\tiny $1$} 
    node[pos=0.8,above,black]{\tiny $q^3$}   (c5x11) ;
     \path[-,font=\scriptsize]
   (c2x1q-1) edge[-=0.8] node[pos=0.2,auto,black]{\tiny $1$} 
    node[pos=0.8,above,black]{\tiny $q^3$}  (c5x1q-1) ;
    \path[-,font=\scriptsize]
   (c2x01) edge[-=0.8] node[pos=0.2,auto,black]{\tiny $1$} 
    node[pos=0.8,above,black]{\tiny $q^3$}   (c5x01) ;

\path[-,font=\scriptsize]
(c2x10)  edge[-=0.85, bend left] node[pos=0.2,left,black]{$q^3$}
 node[pos=0.8,left,black]{\tiny $q^2$} (cx01);

\path[-,font=\scriptsize]
(c2x01)  edge[-=0.85, bend left=45] node[pos=0.1,left,black] {$q^3$} node[pos=0.9,left=-.1,black]{\tiny $q^2$} (cx10);

\path[-,font=\scriptsize]
(1.95,-3.25)  edge[dashed, bend left] node[pos=0.2,left,black] {} node[pos=0.8,left,black]{} (1.95,-.15);


\path[-,font=\scriptsize]
(c4x10) edge[-=0.8] node[pos=0.2,left,black] {} node[pos=0.8,left,black]{} (4.5,1.1);

\path[-,font=\scriptsize]
(c4x11) edge[-=0.8] node[pos=0.2,left,black] {} node[pos=0.8,left,black]{} (4.5,.5);

\path[-,font=\scriptsize]
(c4x1q-1) edge[-=0.8] node[pos=0.2,left,black] {} node[pos=0.8,left,black]{} (4.5,-.5);

\path[-,font=\scriptsize]
(c4x01) edge[-=0.8] node[pos=0.2,left,black] {} node[pos=0.8,left,black]{} (4.5,-1.1);


\path[-,font=\scriptsize]
(c5x10) edge[-=0.8] node[pos=0.2,left,black] {} node[pos=0.8,left,black]{} (4.5,-2);

\path[-,font=\scriptsize]
(c5x11) edge[-=0.8] node[pos=0.2,left,black] {} node[pos=0.8,left,black]{} (4.5,-2.6);

\path[-,font=\scriptsize]
(c5x1q-1) edge[-=0.8] node[pos=0.2,left,black] {} node[pos=0.8,left,black]{} (4.5,-3.6);

\path[-,font=\scriptsize]
(c5x01) edge[-=0.8] node[pos=0.2,left,black] {} node[pos=0.8,left,black]{} (4.5,-4.2);
   
\node at (4.65,0.5) {$\ldots$};
\node at (4.65,1.1) {$\ldots$};
\node at (4.65,-.5) {$\ldots$};
\node at (4.65,-1.1) {$\ldots$};

\node at (4,-.1) {$\vdots$};

\node at (4.65,-2) {$\ldots$};
\node at (4.65,-2.6) {$\ldots$};
\node at (4.65,-3.6) {$\ldots$};
    \node at (4.65,-4.2) {$\ldots$};

    
\vertex[circle,fill,minimum size = 5pt, label={below:\small $c_{2x[1:a]}'$}](c2x1a) at (-1,-2.5) {};
\vertex[circle,fill,minimum size = 5pt, label={below:\small $c_{x[1:a^{-1}]}'$}](cx1a-1) at (0,-2.5) {};

\path[-,font=\scriptsize]
   (c2x1a) edge[-=0.8] node[pos=0.2,auto,black]{\tiny $q^3$} 
    node[pos=0.8,above,black]{\tiny $q^2$}   (cx1a-1) ;

    \path[-,font=\scriptsize]
(1.4,-1.8)  edge[->=1,dashed, bend right] node[pos=0.5,above,black] {$a \in \Fq$}  (-.5,-2.3);

   \end{tikzpicture}
 \endpgfgraphicnamed
\]
\end{minipage} 
 \caption{The graph $\mathcal{G}_{\Phi_y',K}$ for $|y|=3$.}
  \label{fig9}
\end{figure}
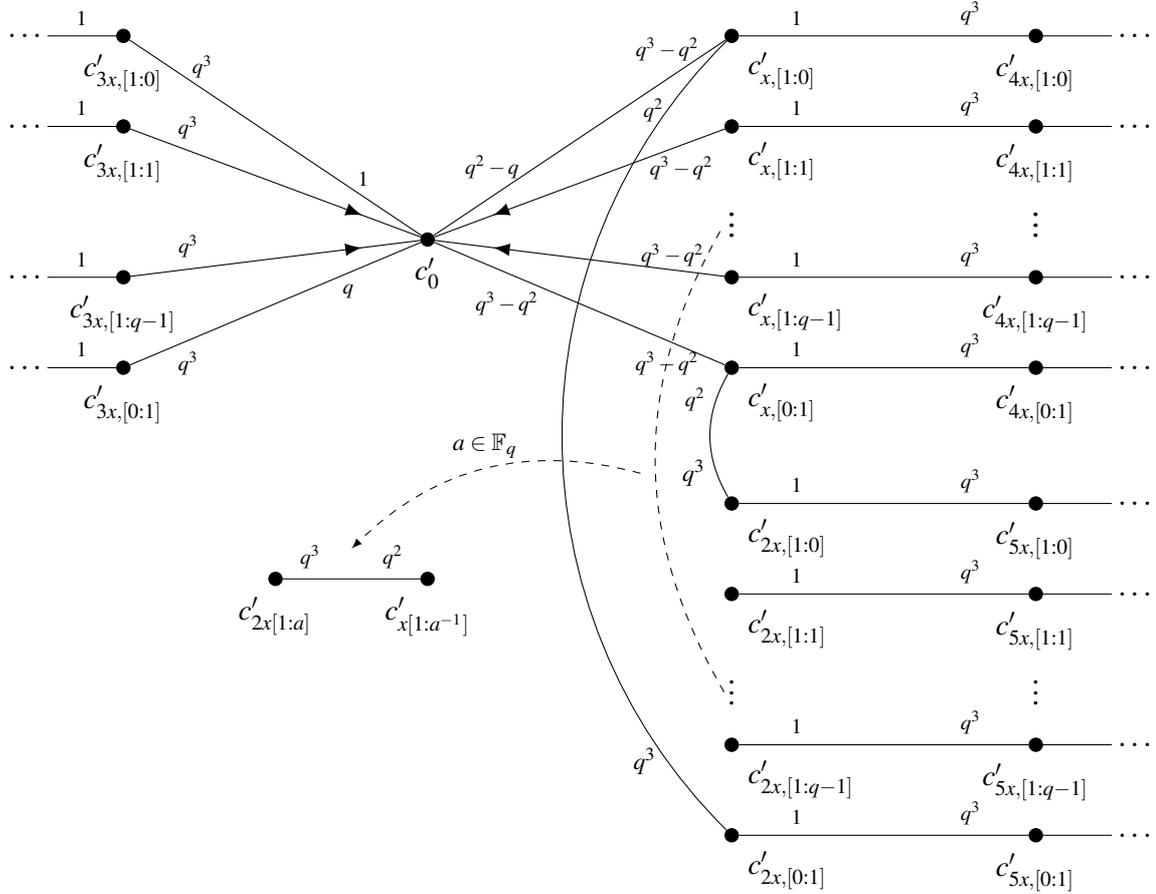

\end{center}


\section{General ramification in one place} \label{sec-generalramification}

Throughout this section, we fix $y$ a place of $\P^1$ of degree $d$. The goal here is to investigate the general situation when $K' \in \cV$ is given by some ramification at $y$ i.e.\ $K_z'=K_z$ for every places $z \neq y$. Hence, there are two important parameters to characterize such a $K'$. The first is the degree of the place $y$ and the second is the open subgroup $K_y' \leqslant  K_y$. We recall that $K_y=\GL_2(\FF_q[[\pi_y]])$. 

If we want to  exhaustively treat the problem of ramification in one place, we need either a method with very general arguments, or a classification of the open subgroups of $K_y$. We do not provide here any of these two things. Nonetheless, we consider an important family of open subgroups of $K_y$, the balls centered at $\id$ for the $\pi_y$-adic topology. Namely, we consider in this section $K_y'$ given by
\[ K_y^{(r)}:=\big\{ k \in K_y \;|\; k \equiv \id~\textrm{mod}(\pi_y^r)\big\}=GL_2(\kappa(y))+\pi_yM_2(\kappa_y[[\pi_y]])\]
for some positive integer $r$.
They are all open subgroups of $K_y$ and form a basis of compact neighborhoods of $\id$ for the $\pi_y$-adic topology. Thus, studying them is sufficient to understand the general problem of ramification in one place.

Therefore, the $K' \in \cV$ under consideration in this section is given by 
\[K' := K_y^{(r)}\times \prod_{z\neq y}K_z.\] 
In general, we shall denote these subgroups of $K$ by $K^{(y,r)}$, where $y$ represents the place where we consider the ramification and $r$ is the weight of ramification. Since we are fixing $y$ the place of ramification, we shall denote $K' =: K^{(y,r)}$ simply by $K^{(r)}$ whether there is no possible confusion.

The goal of this question is to investigate the action of $\cH_{K'}$ on $\cA^{K'}$ for $K'=K^{(r)}$. Observe that we have previously considered and solved this problem for $|y|=r=1$.

 \subsection*{Representation of the double coset} 
 In order to probe the action of $\cH_{K^{(r)}}$ on $\cA^{K^{(r)}}$, the first and important step is to find a system of representatives of the double coset $G(F) \setminus G(\mathbb{A})/Z K^{(r)}$.
There are different possibilities to get a system of representatives that generalize what we did in the last section. We show two natural strategies to do that. First, we need to introduce some notations. By definition, 
\[ \cdots \hookrightarrow K^{(r+1)}\hookrightarrow K^{(r)} \hookrightarrow \cdots \hookrightarrow K^{(1)} \hookrightarrow K^{(0)}:=K.\] 
And thus,
\[ \cdots \twoheadrightarrow G(F) \setminus G(\mathbb{A})/Z K^{(r+1)}\twoheadrightarrow G(F) \setminus G(\mathbb{A})/Z K^{(r)} \twoheadrightarrow \cdots \twoheadrightarrow G(F) \setminus G(\mathbb{A})/Z K^{(0)}\]

For every $i\geq j$, we can define
\[ P_{i,j} : G(F) \setminus G(\mathbb{A})/Z K^{(i)} \twoheadrightarrow G(F) \setminus G(\mathbb{A})/Z K^{(j)}\]
and denote $P_r$ for $P_{r,0}$. Observe that $P_{i,k}=P_{i,j}\circ P_{j,k}$ for every $i \geq j \geq k.$
In the following we find systems of representatives of $G(F) \setminus G(\mathbb{A})/Z K^{(r)}$ which are coherent with the surjections $P_{i,j}$  and generalize what we did in the last section. 

\begin{rem}\label{embeddings4.1}
The first step to achieve what we planned, would be to understand, for $i \geq j$, the quotient $K_{i,j}:= K^{(j)}/K^{(i)}$. First, remark that since $K^{(j)}$ and $K^{(i)}$ coincide on every other places different from $y$, it is enough to understand the quotient $K^{(j)}_y/K^{(i)}_y$. Let us remember that $(K^{(m)}_y)_m$ is not a general filtration of $\{\id\}$, indeed these groups are defined algebraically as kernels of reductions modulo powers of $\pi_y$. 

From the definition of $K^{(i)}_y$, we already have an answer for the case $j=0$. More precisely, $K_y/K^{(i)}_y$ is in bijection with the image of the reduction modulo $\pi_y^{i}$, which is exactly $\GL_2(\kappa(y))+\sum_{k=1}^{i-1}\pi_y^{i}M_2(\kappa(y))$. To conclude the case $j=0$, we observe that $\GL_2(\kappa(y))+\sum_{k=1}^{i-1}\pi_y^{i}M_2(\kappa(y))$ is in bijection with $(K_y-K_y^{i})\cup\{\id\}$. 

Previous construction extends for $j\neq0$, i.e. the restriction to $(K_y^j-K_y^{i})\cup\{\id\}$ of the canonical surjection 
\[ K^{(j)}_y \longrightarrow K^{(j)}_y/K^{(i)}_y\]
is, indeed,  a bijection. Since $K_y^{i}$ is sent to $\{\id\}$, we have the surjection. The injectivity comes from reducing modulo $\pi_y^{i}$ and applying the case $j=0$. All in one, we have a nice lift $K_{i,j}$ in $K_j$ and then it makes sense to write $K^{(j)}=K^{(i)}K_{i,j}$.

\end{rem}

\subsection*{First strategy to represents the double coset.} 
    Suppose $S$ is a system of representatives of $G(F)\setminus G(\mathbb{A})/ZK$.
By induction, we shall build $S_r$ a system of representatives of $G(F) \setminus G(\mathbb{A})/Z K^{(r)}$ in $G(\mathbb A)$.

We define $S_{0}=S$. Let $r \geq 1$ be an integer and $g \in S$. Recall that, according to Remark \ref{embeddings4.1},  $K_{r+1,r}$ is embedded in $K^{(r+1)} \subset G(\mathbb A)$. Then we can proceed as in Lemma \ref{lemma3.3}, i.e. if we fix $s_r \in S_r$, we have $P_{r+1,r}^{-1}([s_r])=[s_rK_{r+1,r}]$. Note that in the previous equation, the first $[.]$ means the class in $G(F) \setminus G(\mathbb{A})/Z K^{(r)}$ but the second means the class in $G(F) \setminus G(\mathbb{A})/Z K^{(r+1)}$. Now we just define $S_{s_r,r+1}$ as a subset of $K_{r+1,r}$ which is such that $s_rS_{s_r,r+1}$ is a system of representatives of $P_{r+1}^{-1}$. It is equivalent to consider the equivalence relations on $K_{r+1,r}$ induced by $G(F) \setminus s_rK_{r+1,r}/ZK^{(r)}$ and take a system of representatives $S_{s_r,r+1}$. Thus, the last step is to define $S_{r+1}=\coprod_{s_r \in S_r}s_rS_{s_r,r+1}$.

Hence, we showed a way to build iteratively a system of representatives of $G(F) \setminus G(\mathbb{A})/ZK^{(r)}$. At each step, we need to find a system of representatives for an equivalence relation on $M_2(\kappa(y))=\kappa(y)^4=\FF_{q^{|y|}}^4$. Elements $s_r \in S_r$ are given by construction, as following 
\[ s_r=sg(1+\pi_yM_1)...(1+\pi_y^rM_r)\] 
with $s \in S$, $g \in \GL_2(\kappa(y))$ and $M_i \in M_2(\kappa(y))$. Next, we define, 
\begin{eqnarray*}
   \theta_{i,j}:  S_r  & \longrightarrow &  G(\mathbb{A})  \\
    sg(1+\pi_yM_1)...(1+\pi_y^rM_i)  & \longmapsto &  sg(1+\pi_yM_1)...(1+\pi_y^rM_j) 
\end{eqnarray*}
for every $i \geq j$.
Here the meaning we give to the fact that the $S_r$ are coherent with the canonical surjections is that the application $\theta_{i,j} : S_i \longrightarrow S_j$ coincides with $P_{i,j}$ for every $i \geq j$.

\subsection*{Second strategy to represents the double cosets}

    Let $S$ be again a system of representatives of $G(F)\setminus G(\mathbb{A})/ZK$. Let $g \in S$ and $r \geq 1$. Let us see another way to build systems of representatives $S_r$ of the double cosets $G(F)\setminus G(\mathbb{A})/ZK^{(r)}$ that are coherent with the canonical surjections given before, which we will explain more specifically. The construction is similar to the previous one, but we will see that it is not the same.

    Recall that, according to remark \ref{embeddings4.1},  $K_{r,0}$ is embedded in $K^{(r)} \subset G(\mathbb A)$. Then we can do as in \ref{lemma3.3}, i.e. if we fix $s \in S$, we have $P_{r}^{-1}([s])=[sK_{r,0}]$. Note that in the previous equation, the first $[.]$ means the class in $G(F) \setminus G(\mathbb{A})/Z K^{(r)}$, but the second means the class in $G(F) \setminus G(\mathbb{A})/Z K^{(r+1)}$.

Now we fix $r\geq 1$ and let $s \in S$. We can take a subset $S_{s,r}$ of $K_{r,0}$ such that $s S_r$ is a system or representatives of $G(F)\setminus sK_{r,0}/ZK^{(r)}$.
Therefore, we define 
\[ S_r=\coprod_{s \in S}s S_{s,r}.\]
This strategy shows how we can get a system or representatives of $G(F)\setminus G(\mathbb{A})/ZK^{(r)}$ without knowing one for the $G(F)\setminus G(\mathbb{A})/ZK^{(k)}$, with $k \leq r-1$. Moreover, there is a natural way to deduce a system of representatives of $G(F)\setminus G(\mathbb{A})/ZK^{(k)}$ from a system of representatives of $G(F)\setminus G(\mathbb{A})/ZK^{(r)}$. Indeed, we verify that the application 
\begin{eqnarray*}
       \varphi_{r,k}: S_r & \longrightarrow & G(\mathbb{A}) \\
    s(g+\sum_{i=1}^{r-1}\pi_y^{i}M_i) & \longmapsto & s(g+\sum_{i=1}^{k-1}\pi_y^{i}M_i) 
\end{eqnarray*}
sends $S_r$ on a system or representatives of $G(\mathbb{A})/ZK^{(k)}$, for every $k \leq r$.

Let us now do the opposite and suppose that we already know such systems of representatives $S_k$ of $G(F)\setminus G(\mathbb{A})/ZK^{(k)}$, for every $k \leq r$ (or equivalently just one for $G(F)\setminus G(\mathbb{A})/ZK^{(r)}$).
Let $s \in S$, projecting $sK_{r+1,0}$ on $sS_{s,r}$ with $\varphi_{r+1,r}$, will make it possible for us to find a system of representatives of $sK_{r+1,0}$.
Let $s_r \in S_r$, yields 
\[ \varphi_{r+1,r}^{-1}(s_r)=\big\{s(s_r+\pi_y^rM) \;|\; M \in M_2(\kappa(y))\big\}.\] 
Let $S_{s,s_r,r+1}$ stand for a system of representatives for the equivalence relation that $G(F)\setminus \varphi_{r+1,r}^{-1}(s_r)/ZK^{(r+1)}$ induces on $M_2(\kappa(y))=\kappa(y)^4=\FF_{q^{|y|}}^4$. Then
\[S_{s,r}:=\coprod_{s_r \in S_r}s(s_r+\pi_y^rS_{s,s_r,r+1})\] 
is a system of representatives of $sK_{r+1,0}$.
And finally we get the system of representatives $S_{r+1}=\coprod_{s \in S}sS_{s,r+1}$.

Therefore, we get something very similar to the first strategy. We iteratively built the $S_r$ for each step where an equivalence relation on $M_2(\kappa(y))=\FF_{q^{|y|}}^4$ must be considered.
Here, the fact that $S_r$ is coherent with the canonical surjections means that the application $\varphi_{i,j}$ coincides with $P_{i,j}$ for every $i \geq j$.

\begin{rem} In both strategies above, we could consider $S =\{c_{nx} \;|\;n \in \N\}$ with $x \in |\P^1|$ a degree one place, as in the Section \ref{sec-unramified}. 
\end{rem}

Let us now summarize what we have just described in the previous two strategies. In both cases, we saw a technique to build systems of representatives $S_r$ of $G(F)\setminus G(\mathbb{A})/K^{(r)}$ by iterating $r$ steps. This is done by finding one after another systems, or representatives, of $G(F)\setminus G(\mathbb{A})/Z K^{(k)}$, for $k\leq r$. In every step we have to understand some equivalence relations on $\kappa(y)^4=\FF_{q^{d}}^4$, for $d = |y|$. The first strategy is more coherent with the multiplicative structure of $K$ and the second one more coherent with its additive structure. We can see that the systems or representatives that we got with these two techniques are not the same for $d>2$. Indeed, since one technique uses summation and the other uses multiplication, the degrees are different.

The two constructions that we gave before are not the only natural strategies to be considered. We can give an analogous construction for every partition of the integer $r$. For instance, the first strategy corresponds to $r=1+ \cdots +1$, whereas the second one corresponds to $r=r$. Again, by looking at the degree in $\pi_y$, we see that in general, all those systems of representatives are different.
For $K^{(1)}$ and $|y|=1$, there is only one strategy since $1=1$ is the only partition of the unity. In this case, the equivalence relation on $\FF_q^4$ was $(a,b,c,d)\sim(a',b',c',d')$ if and only if $cd'=dc'$, which gives $\P^1(\FF_q)$ for the quotient space, as we saw.

\subsection*{Similarities between cases $(1,2)$ and $(2,1)$}
 As we have seen before, fixed a positive integer $r$, $K^{(y,r)}$ only depends on the degree of $y$. Therefore we shall denote  $K^{(y,r)}$ simply by  $K^{(d,r)}$, where $d= |y|$.  Next, we investigate the similarities between 
 $K^{(1,2)}$ and $K^{(2,1)}.$
 
 Let us fix $x, y \in |\P^1|$ with $|x|=1$ and $|y|=2$ such that $ K^{(1,2)} := K^{(x,2)}$ and  $K^{(2,1)} := K^{(y,1)}$.  We aim to find a system of representatives of $G(F)\setminus G(\mathbb{A})/ZK^{(x,2)}$ and $G(F)\setminus G(\mathbb{A})/ZK^{(y,1)}$.
 Consider $S:=\{p_{nx} \in G(\A) \;|\; n \in \N\}$ as in the previous section. 

In the case $(1,2)$, according to the previous discussion, we need to understand when we have the equivalences 
\[ (p_{nx}(g_0+\pi_xM_1))_x\sim (p_{nx}(g_0'+\pi_xM_1'))_x \]
in $G(F) \setminus G(\mathbb{A})/ZK^{(x,2)}$,
for every $n \in \N$, $g_0,g_0' \in \GL_2(\FF_q)$ and $M_1,M_1' \in M_2(\FF_q)$.

In the case $(2,1)$, we have to do as in the last section, but the difficulty is that the place of the ramification is not the place that we used to represent $G(F) \setminus G(\mathbb{A})/ZK$. Indeed, they are not of the same degree. Thus, we want to understand when we have
\[ (p_{nx})_x(g)_y\sim (p_{nx})_x(g')_y\]
in $G(F) \setminus G(\mathbb{A})/ZK^{(y,1)}$,
for every $n \in \N$ and $g,g' \in  \GL_2(\kappa(y))=\GL_2(\FF_{q^2})$.

As in the previous section, namely by Lemma \ref{lemma-uni}, we can suppose that $\pi_x^2=\pi_y^{-1}$. Then $\{1, \pi_x^{-1}\}$ is a $\FF_q$-basis of $\kappa(y)=\FF_{q^2}$. Thus if $g=g_0+\pi_x^{-1}g_1$, yields
\[(p_{nx})_x(g)_y\sim((g_0+\pi_x^{-1}g_1)^{-1}p_{nx})_x.\]

Here, we would like to process as in previous section and reduce the problem to one single place. To do that, we generalize Proposition \ref{prop-ramifstrongaprox}.

\begin{prop}
 Let $x \in |\P^1|$ of degree one  and $K'=\prod_{y \in |\P^1|}K_y'\in \cV$. The map 
   \[ (.)_x : \Gamma_x'
\setminus G_x/Z_xK_x' \longrightarrow G(F) \setminus G(\mathbb{A})/ZK'\]
    with $\Gamma_x'= \bigcap_{y\neq x}(K_y' \cap G(G) )$ is well defined and injective.
\end{prop}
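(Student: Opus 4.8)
This is the direct generalization of Proposition \ref{prop-ramifstrongaprox}, and I would follow exactly the same two-step elementary argument, only replacing $\Gamma_x$ by $\Gamma_x'=\bigcap_{y\neq x}(K_y'\cap G(F))$ and keeping track of the fact that $K_y'$ may differ from $K_y$ at finitely many places $y\neq x$. First I would reduce the center: show that
\[ G(F)\setminus G(\mathbb A)/ZK' = G(F)\setminus G(\mathbb A)/Z_xK'.\]
The argument is verbatim the one in Proposition \ref{prop-ramifstrongaprox}: given $h=z_xgh'k'$ with $z\in Z$, one adjusts $z_x$ so that $z$ is trivial at $x$, then adjusts $k'$ (using that $Z_y\cap K_y'$ contains scalars of the form $1+\pi_y(\cdots)$ that are congruent to $1$, hence $z$ can be absorbed into $k'$ at all but finitely many places up to scalars of the form $1/\rho(\pi_y)$), and finally absorbs the remaining finite product of scalar values into $g$. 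The only point to check is that this still works when $K_y'\neq K_y$: but scalars lying in $K_y'$ still suffice, since $\rho(\pi_y)^{-1}\cdot\id$ lands in $K_y'$ for $\rho$ with unit constant term, which is exactly the flexibility we used.

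The second and main step is the identification
\[ G(F)\setminus G(\mathbb A)/Z_xK' = \Gamma_x'\setminus G_x/Z_xK_x'.\]
For the forward direction: given $a,b\in G_x$ with $a=z\gamma bk_x'$, $\gamma\in\Gamma_x'$, $k_x'\in K_x'$, build $k'\in K'$ equal to $\gamma^{-1}$ at every place $z\neq x$ and equal to $k_x'$ at $x$; this $k'$ genuinely lies in $K'$ precisely because $\gamma\in\Gamma_x'$ means $\gamma\in K_y'$ for all $y\neq x$, which is the whole reason for defining $\Gamma_x'$ with the primed groups. Then $(a)_x=(z)_x\gamma(b)_xk'$ exhibits the relation in $G(F)\setminus G(\mathbb A)/Z_xK'$. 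Conversely, if $(a)_x=(z)_xg(b)_xk'$ with $g\in G(F)$, $k'\in K'$, project at $x$ to get $a=zgbk_x'$, and project at each $y\neq x$ to get $gk_y'=\id$, i.e.\ $g=(k_y')^{-1}\in K_y'$; hence $g\in\bigcap_{y\neq x}(G(F)\cap K_y')=\Gamma_x'$, which gives the relation in $\Gamma_x'\setminus G_x/Z_xK_x'$. This shows the map $(.)_x$ is well-defined and injective.

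The step I expect to require the most care is the center reduction at places where $K_y'\subsetneq K_y$: one must verify that the scalars needed to clear $z\in Z$ — namely elements of $F_y^\times$ of the form $1/\rho(\pi_y)$ with $\rho$ a polynomial having unit constant term — can indeed be taken inside $K_y'$ after the reduction mod the relevant power of $\pi_y$, rather than merely inside $K_y$. Since $K_y'$ is an open subgroup containing a full congruence ball $1+\pi_y^{m}M_2(\mathcal O_y)$ for some $m$ (this holds for every $K'\in\cV$, as each $K_y'$ has finite index in $K_y$), and scalars congruent to $1$ mod $\pi_y^m$ lie in that ball, the same bookkeeping goes through; I would spell this out in one sentence. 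Everything else is a word-for-word transcription of Proposition \ref{prop-ramifstrongaprox}, so I would state it as such and only highlight the two modifications above.
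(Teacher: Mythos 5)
Your proposal is correct and follows essentially the same route as the paper: the paper's proof likewise asserts that the center reduction $G(F)\setminus G(\mathbb A)/ZK'=G(F)\setminus G(\mathbb A)/Z_xK'$ carries over verbatim from Proposition \ref{prop-ramifstrongaprox}, and then runs exactly your two-directional argument, building $k'\in K'$ from $\gamma^{-1}\in\Gamma_x'$ at the places $y\neq x$ and, conversely, projecting the adelic relation to each place to conclude $g\in\bigcap_{y\neq x}(G(F)\cap K_y')=\Gamma_x'$. Your extra paragraph scrutinizing the center reduction at places where $K_y'\subsetneq K_y$ is a point the paper passes over in silence, so no objection there.
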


\begin{proof}
The proof of Proposition \ref{prop-ramifstrongaprox}, which shows that 
\[G(F) \setminus G(\mathbb{A})/ZK'=G(F) \setminus G(\mathbb{A})/Z_{x}K'\] still holds.
The fact that $G(F) \setminus G(\mathbb{A})/Z_{x}K'=\Gamma_x' \setminus G_x/Z_xK_x'$ is very similar.
Suppose that $a=z\gamma bk_x'$ with $a,b \in G_x$, $\gamma \in \Gamma_x'$ and $k_x' \in K_x'$. Define $k' \in K'$ such that $k'$ is equal to $\gamma^{-1}$ on every place $y\neq x$, and equal to $k_x'$ on the place $x$. Then we have $(a)_x=(z)_x\gamma (b)_xk'$ with $\gamma \in G(F)$ and $k' \in K'$. Reciprocally, suppose now that we have $(a)_x=(z)_xg(b)_xk'$ with $a,b \in G_x$, $g \in G(F)$ and $k' \in K'$. At the place $x$, this relation gives $a=zgbk_x'$, and  at every other place $y \neq x$, gives $gk_y'=1$. Thus $g \in \bigcap_{y\neq x}K_y'$, which shows that $g \in \Gamma_x'$ and concludes the proof.
\end{proof}

Previous proposition in the case of interest on the section yields the following. 

\begin{cor}
The maps
\[ (.)_x : \Gamma_x \setminus G_x/Z_xK_x^{(x,2)} \longrightarrow G(F) \setminus G(\mathbb{A})/ZK^{(x,2)}\]
and
\[ (.)_x : \Gamma_x^{(2)} \setminus G_x/Z_xK_x \longrightarrow G(F) \setminus G(\mathbb{A})/ZK^{(y,1)}\]
where $\Gamma_x^{(2)}:=\Gamma_x\cap K_y^{(x,2)}=
\big\{\gamma \in \Gamma_x \; | \; \gamma \equiv \id 
 \; \mathrm{mod}(\pi_x^{-2}) \big\}$ are well-defined and injective.

 $\hfill \square$
\end{cor}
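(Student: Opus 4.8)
The plan is to deduce both statements as immediate special cases of the Proposition that precedes it, by choosing the right $K'\in\cV$ and identifying the correct arithmetic group $\Gamma_x'$. Recall that the Proposition asserts: for any place $x$ of degree one and any $K'=\prod_{z}K_z'\in\cV$, the map $(.)_x\colon \Gamma_x'\setminus G_x/Z_xK_x'\to G(F)\setminus G(\A)/ZK'$ is well-defined and injective, where $\Gamma_x'=\bigcap_{z\neq x}(K_z'\cap G(F))$. So all that remains is to unwind what $K_x'$ and $\Gamma_x'$ become in the two cases at hand.

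For the first map, I would take $K'=K^{(x,2)}$, i.e. $K_x'=K_x^{(2)}$ and $K_z'=K_z$ for $z\neq x$. Then $K_x'=K_x^{(x,2)}$ is exactly the local group appearing in the statement, and $\Gamma_x'=\bigcap_{z\neq x}(K_z\cap G(F))=\Gamma_x=\GL_2(\FF_q[\pi_x^{-1}])$, since $K_z'=K_z$ off $x$. Plugging into the Proposition gives precisely the asserted injective map $\Gamma_x\setminus G_x/Z_xK_x^{(x,2)}\to G(F)\setminus G(\A)/ZK^{(x,2)}$.

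For the second map, I would take $K'=K^{(y,1)}$, i.e. $K_y'=K_y^{(1)}$ and $K_z'=K_z$ for all $z\neq y$; note that $x\neq y$ since $|x|=1\neq 2=|y|$, so $K_x'=K_x$ and the local group at $x$ is just $K_x$, matching the statement. Now $\Gamma_x'=\bigcap_{z\neq x}(K_z'\cap G(F))$; this coincides with $K_y^{(1)}\cap\bigcap_{z\neq x,y}(K_z\cap G(F))=\Gamma_x\cap K_y^{(y,1)}$. The only genuine point to check is the last assertion of the corollary, namely the identification $\Gamma_x\cap K_y^{(y,1)}=\Gamma_x^{(2)}:=\{\gamma\in\Gamma_x\mid \gamma\equiv\id\bmod(\pi_x^{-2})\}$. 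This uses the normalization $\pi_x^2=\pi_y^{-1}$ from Lemma \ref{lemma-uni} (so that $v_z(\pi_y)=0$ for $z\neq x,y$ and $\FF_q[\pi_x^{-1}]\subset\cO_y$ maps onto the residue field $\kappa(y)$ with $\pi_x^{-1}$ a unit): an element $\gamma\in\Gamma_x=\GL_2(\FF_q[\pi_x^{-1}])$ lies in $K_y^{(y,1)}$ iff $\gamma\equiv\id\bmod\pi_y$ in $K_y$, and under $\pi_x^2=\pi_y^{-1}$ the condition $\gamma\equiv\id\bmod\pi_y$ translates into the congruence $\gamma\equiv\id\bmod(\pi_x^{-2})$ on the entries (each entry being a polynomial in $\pi_x^{-1}$). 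With this identification, the Proposition applied to $K'=K^{(y,1)}$ yields the second map, completing the proof.

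The only mild obstacle is the bookkeeping in the last paragraph: one must be careful that $\Gamma_x^{(2)}$ is defined by a congruence modulo $\pi_x^{-2}$ rather than $\pi_x^{-1}$, which comes from $|y|=2$ and $\pi_y^{-1}=\pi_x^{2}$, so that a uniformizer of $y$ ``costs two powers'' of $\pi_x^{-1}$; everything else is a direct substitution into the already-proven Proposition and requires no further argument. $\hfill\square$
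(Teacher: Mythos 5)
Your proposal is correct and matches the paper's intent exactly: the corollary is stated with no separate proof precisely because it is the immediate specialization of the preceding proposition to $K'=K^{(x,2)}$ and $K'=K^{(y,1)}$, with the only content being the identification of $\Gamma_x'$ in each case, which you carry out (including the translation of $\gamma\equiv\id\bmod\pi_y$ into $\gamma\equiv\id\bmod(\pi_x^{-2})$ via the normalization of $\pi_y$ from Lemma \ref{lemma-uni}).
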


Hence, the two problems pointed out above are left to understand when 
\[p_{nx}(g_0+\pi_xM_1)\sim p_{nx}(g_0'+\pi_xM_1') \] 
in $\Gamma_x \setminus G_x/Z_xK_x^{(x,2)}$,
for every $n \in \N$, $g_0,g_0' \in \GL_2(\FF_q)$ and $M_1,M_1' \in M_2(\FF_q)$. And when
\[(g_0+\pi_x^{-1}g_1)^{-1}p_{nx}\sim (g_0'+\pi_x^{-1}g_1')^{-1}p_{nx}\]
in $\Gamma_x^{(2)} \setminus G_x/Z_xK_x$,
for every $n \in \N$ and $g_0,g_0',g_1,g_1' \in M_2(\FF_q)$ such that $g_0+\pi_x^{-1}g_1,g_0'+\pi_x^{-1}g_1' \in \GL_2(\kappa(y))$.

\begin{prop} 
Let $n \in \N$, $g_0,g_0' \in \GL_2(\FF_q)$ and $g_1,g_1' \in M_2(\FF_q)$ such that $g_0+\pi_x^{-1}g_1,g_0'+\pi_x^{-1}g_1' \in \GL_2(\kappa(y))$ and $\det(g_0+\pi_xg_1)/\det(g_0'+\pi_xg_1') \in \FF_q^{\times}$. Then 
\[ p_{nx}(g_0+\pi_xg_1)\sim p_{nx}(g_0'+\pi_xg_1') \]
in $\Gamma_x \setminus G_x/Z_xK_x^{(x,2)}$, if and only if
\[(g_0^{-1}+\pi_x^{-1}g_1)p_{nx}\sim (g_0'+\pi_x^{-1}g_1')^{-1}p_{nx}\] 
in  $\Gamma_x^{(2)} \setminus G_x/Z_xK_x$
\end{prop}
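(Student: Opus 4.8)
The plan is to deduce the asserted equivalence from an involution of $\GL_2(F)$. Let $\sigma$ be the $\Fq$-automorphism of $F=\Fq(t)$ with $\sigma(\pi_x)=\pi_x^{-1}$, acting entrywise on matrices; then $\sigma(p_{nx})=p_{-nx}$ and $\sigma(g_0+\pi_xg_1)=g_0+\pi_x^{-1}g_1$, so that
\[
\sigma\bigl((p_{nx}(g_0+\pi_xg_1))^{-1}\bigr)=(g_0+\pi_x^{-1}g_1)^{-1}p_{nx},
\]
and likewise for the primed data. Thus the map $g\mapsto\sigma(g^{-1})$ interchanges the two matrices occurring in the statement, and what must be checked is that it also interchanges the two kinds of admissible modification: left translation by $\Gamma_x$ and right translation by $Z_xK_x^{(x,2)}$ on one side, against left translation by $\Gamma_x^{(2)}$ and right translation by $Z_xK_x$ on the other. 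Since $\sigma^{2}=\id$, it suffices to treat one implication.

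So assume $p_{nx}h=\gamma z\,p_{nx}h'\,k$ with $h=g_0+\pi_xg_1$, $h'=g_0'+\pi_xg_1'$, $\gamma\in\Gamma_x$, $z\in Z_x$ and $k\in K_x^{(x,2)}$; note $h,h'\in K_x$, their determinants being polynomials in $\pi_x$ with unit constant term. Because $\sigma$ is defined only on $\GL_2(F)$, the first step is to massage this identity into $F$-rational form: set $zk=(p_{nx}h')^{-1}\gamma^{-1}(p_{nx}h)\in\GL_2(F)$, let $m=v_x(z)$ and $\lambda=\pi_x^{-m}zk$, so that $\lambda\in\GL_2(F)\cap K_x=\GL_2(\Fq[t]_{(t)})$ and $p_{nx}h=\pi_x^{m}\gamma\,p_{nx}h'\,\lambda$. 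Taking determinants and using the hypothesis $\det h/\det h'\in\Fq^{\times}$ together with $\det\gamma\in\Fq[\pi_x^{-1}]^{\times}=\Fq^{\times}$ forces $m=0$ and $\det\lambda\in\Fq^{\times}$; since moreover $k\equiv\id\pmod{\pi_x^{2}}$ and $\kappa(x)=\Fq$, this yields $\lambda\equiv z_0\cdot\id\pmod{\pi_x^{2}}$ for some $z_0\in\Fq^{\times}$. Inverting $p_{nx}h=\gamma\,p_{nx}h'\,\lambda$ and applying $\sigma$ gives
\[
(g_0+\pi_x^{-1}g_1)^{-1}p_{nx}=z_0\cdot\bigl(z_0^{-1}\sigma(\lambda)^{-1}\bigr)\cdot(g_0'+\pi_x^{-1}g_1')^{-1}p_{nx}\cdot\sigma(\gamma)^{-1}.
\]
Here $\sigma(\gamma)^{-1}\in\GL_2(\Fq[\pi_x])\subseteq K_x$ (using $\det\gamma\in\Fq^{\times}$) is a legitimate right $K_x$-factor, $z_0\in Z_x$, and $\delta:=z_0^{-1}\sigma(\lambda)^{-1}$ is to be recognised as the required element of $\Gamma_x^{(2)}$; the displayed identity then says precisely that the two matrices are equivalent in $\Gamma_x^{(2)}\setminus G_x/Z_xK_x$.

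It remains to verify $\delta\in\Gamma_x^{(2)}$, which is the crux. The congruence $\delta\equiv\id\pmod{\pi_x^{-2}}$ follows by applying $\sigma$ to $\lambda\equiv z_0\cdot\id\pmod{\pi_x^{2}}$. The delicate point is that $\delta$ has entries in $\Fq[\pi_x^{-1}]$, equivalently that $\lambda\in\GL_2(\Fq[t])$, i.e.\ that $\lambda$ has no pole away from the place $\pi_x^{-1}=0$. Since $\lambda\in\GL_2(\Fq[t]_{(t)})$ is already regular at $x$ and $\det\lambda\in\Fq^{\times}$, the only finite places away from $x$ at which $\lambda$ could have a pole are the zeros of $\det h'$, which by $\det h/\det h'\in\Fq^{\times}$ coincide with the zeros of $\det h=\det(g_0+\pi_xg_1)$; from the identity $\lambda=(p_{nx}h')^{-1}\gamma^{-1}(p_{nx}h)$, the fact that $h,h'$ have entries of degree $\leq 1$ in $\pi_x$ and $\gamma$ has entries in $\Fq[\pi_x^{-1}]$, together with the hypothesis $g_0+\pi_x^{-1}g_1,\ g_0'+\pi_x^{-1}g_1'\in\GL_2(\kappa(y))$ (which controls the place $y$ relative to these determinants), one shows that these potential poles must cancel, so that $\lambda$ is polynomial in $t$. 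This cancellation is the main obstacle; once it is established, the argument above gives one implication, and the same reasoning applied to $\sigma$ in the other direction gives the converse.
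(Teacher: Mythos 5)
Your overall strategy is the same as the paper's: transport the witnessing identity through the substitution $\pi_x\mapsto\pi_x^{-1}$ (your $\sigma$), check that the $\Gamma_x$-factor becomes a $K_x$-factor, and that the $Z_xK_x^{(x,2)}$-factor becomes (up to centre) a $\Gamma_x^{(2)}$-factor. Your determinant bookkeeping ($m=0$, $\det\lambda\in\Fq^{\times}$) is correct, and you rightly isolate the remaining issue: everything hinges on $\lambda$ lying in $\GL_2(\Fq[\pi_x])$, i.e.\ on $\det(g_0'+\pi_xg_1')$ dividing every entry of $\mathrm{adj}(g_0'+\pi_xg_1')\,p_{nx}^{-1}\gamma^{-1}p_{nx}\,(g_0+\pi_xg_1)$, so that $\lambda$ has no pole at the zeros of $\det(g_0'+\pi_xg_1')$. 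But this is exactly the step you do not carry out: you write that ``one shows that these potential poles must cancel'' and label it ``the main obstacle''. That is a genuine gap, not a routine omission. The information you have actually established about $\lambda$ --- membership in $\cO_x^{\times}K_x^{(x,2)}$, which is a condition at the single place $x$, together with $\det\lambda\in\Fq^{\times}$ --- does \emph{not} by itself exclude poles at other finite places (a matrix can be congruent to a scalar modulo $\pi_x^2$ at $x$, have constant determinant, and still fail to be integral at a zero of $\det(g_0'+\pi_xg_1')$), so the cancellation genuinely requires an argument using the specific shape of $\gamma\in\Gamma_x$, of $p_{nx}$, and the hypotheses on $g_0+\pi_x^{-1}g_1$ and $g_0'+\pi_x^{-1}g_1'$. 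As written, the proposal is a reduction of the proposition to its hardest step rather than a proof.

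For comparison, the paper's own proof treats this point very briefly: it extracts $\det(g_0+\pi_xg_1)^{-1}$ as an element of the centre $Z_x$, observes that the remaining product has entries of bounded degree in $\pi_x$ which are also integral at $x$ (hence honest polynomials), and then invokes the hypothesis $\det(g_0+\pi_xg_1)/\det(g_0'+\pi_xg_1')\in\Fq^{\times}$ for the determinant. Whether one keeps the scalar $\det(g_0'+\pi_xg_1')$ inside the entries (your $\lambda$) or in the centre (the paper's $k$ ``modulo $Z_x$''), one must in the end reconcile ``polynomial entries'' with ``determinant in $\Fq^{\times}$'' in a \emph{single} representative, and that reconciliation is precisely the divisibility statement you flag. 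So you have correctly located the delicate point of the argument; supplying that divisibility argument is what is needed before the proposal can count as a proof.
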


\begin{proof}
  Suppose that $p_{nx}(g_0+\pi_xg_1)\sim p_{nx}(g_0'+\pi_xg_1')~\mbox{in } \Gamma_x \setminus G_x/Z_xK_x^{(x,2)}$.
  Then there exists $\gamma \in \Gamma_x$, $k \in K_x^{(x,2)}$ and $z \in Z_x$ such that
  $$z_x(g_0+\pi_xg_1)^{-1}p_{nx}^{-1}\gamma p_{nx}(g_0+\pi_xg_1)=k$$
  We denote $\gamma=\gamma(\pi_x^{-1})$ and $k=k(\pi_x)$. Since we want to prove an equivalence in $\Gamma_x^{(2)} \setminus G_x/Z_xK_x$, we can assume $z_x=1$.
Replacing $\pi_x $ by $\pi_x^{-1}$ yields
\[(g_0+\pi_x^{-1}g_1)^{-1}p_{nx}\gamma(\pi_x)p_{nx}^{-1}(g_0+\pi_x^{-1}g_1)=k(\pi_x^{-1}).\]
 By definition $\gamma(\pi_x) \in K_x$ and we would like to conclude $k(\pi_x^{-1}) \in \Gamma_x^{(2)}$. According to the last relation, every matrices on the left have entries with finite degree on $\pi_x$, except maybe $\det(g_0+\pi_xg_1)^{-1}$ which is in $Z_x$. Thus, we can suppose that the coefficients of $k(\pi_x^{-1})$ are in $\FF_q[\pi_x^{-1}]$, module $(Z_x)$. Finally, by hypothesis $\det(k(\pi_x^{-1})) \in \FF_q^{\times} =(\Gamma_x^{(2)})^{\times}$.
Therefore, 
 \[(g_0+\pi_x^{-1}g_1)^{-1}p_{nx}=z_xk(\pi_x^{-1})(g_0'+\pi_x^{-1}g_1')^{-1}p_{nx}\gamma(\pi_x)^{-1}\]
 with $k(\pi_x^{-1}) \in \Gamma_x^{(2)} $ and $\gamma(\pi_x)^{-1} \in K_x$, which concludes the first implication.
The reciprocal implication is analogous.
\end{proof}

\begin{rem}
    By considering $g_0,g_0' \in \GL_2(\FF_q)$ and $g_1,g_1' \in M_2(\FF_q)$ such that $g_0+\pi_x^{-1}g_1,g_0'+\pi_x^{-1}g_1' \in \GL_2(\kappa(y))$ and $\det(g_0+\pi_xg_1)/\det(g_0'+\pi_xg_1') \in \FF_q^{\times}$,  the previous proposition shows an explicit identification between a subset of $G_x/Z_xK_x^{(x,2)}$ and a subset of $\Gamma_x^{(2)} \setminus G_x/Z_xK_x$. Moreover, it induces an explicit identification between some subsets of $G(F) \setminus G(\mathbb{A})/ZK^{(x,2)}$ and $G(F) \setminus G(\mathbb{A})/ZK^{(y,1)}$.
    The hypothesis is very restrictive here, but we guess that the result is true even with a weaker hypothesis. 
    We also expected that one can get analogous and more general result. By adopting the same reasoning, it would seem worth investigating the similarities between the cases $(d,r)$ and $(1,rd)$.
\end{rem}

\begin{rem} The usefulness of the previous proposition is as follows. 
By projecting on $\Gamma_x \setminus G_x/Z_xK_x^{(x,2)}$ we remark that
\[p_{nx}(g_0+\pi_xg_1)\sim p_{nx}(g_0'+\pi_xg_1')\]
in $\Gamma_x \setminus G_x/Z_xK_x^{(x,2)}$  implies  $p_{nx}g_0\sim p_{nx}g_0'$
in $\Gamma_x \setminus G_x/Z_xK_x^{(x,1)}.$
But the second equivalence is what we did in the previous section.
Thus, the problem become of knowing when 
\[p_{nx}(\vartheta_w+\pi_xg)\sim p_{nx}(\vartheta_w+\pi_xg')\]
in $\Gamma_x \setminus G_x/Z_xK_x^{(x,2)}$
for every $n \in \N$, $w \in \P^1(\FF_q)$ and $g,g' \in M_2(\FF_q)$.
It induces an equivalence relation on $ M_2(\FF_q)=\FF_q^4$ and we are left to understand it.    
\end{rem}

\begin{rem} 
To summarize,  we can say that there are similarities between the cases $(1,2)$ and $(2,1)$, and there seems to be the same kind of similarities between $(d,r)$ and $(1, rd)$. With this established, we are left to treat the cases where $d=1$. This case is easier with what we saw because  we need to understand an equivalence relation on $\FF_{q^{|y|}}^4$ to go from a system of representatives of $G(F)\setminus G(\mathbb{A})/ZK^{(y,r)}$ to a system of representatives of $G(F)\setminus G(\mathbb{A})/ZK^{(y,r+1)}$, which seems easier when $|y|=1$.
\end{rem}

\subsection*{A result to simplify the case $(1,r)$}

Next, we give a result, which is a generalization of Proposition \ref{prop-ramifequiv}, for the case $(1,r)$. We fix $x \in |\P^1|$ a degree one place and denote $K^{(r)}$ instead of $K^{(x,r)}$ or $K^{(1,r)}$.

\begin{prop}
Let $n \in \N^{\times}$ and $g,g'\in \GL_2(\FF_q)$. Then
\[p_{nx}(\sum_{i=0}^{r-1}g_i\pi_x^i)\sim p_{nx}(\sum_{i=0}^{r-1}g_i'\pi_x^i) \; \textrm{ in } \;\Gamma_x \setminus G_x/Z_xK_x^{(r)}\]
if and only if 
\[ p_{nx}(\sum_{i=0}^{r-1}g_i\pi_x^i)\sim p_{nx}(\sum_{i=0}^{r-1}g_i'\pi_x^i) \textrm{ in }\Gamma_x \setminus G_x/Z_xK_{x,r}^{(r)}\]
with $K_{x,r}^{(r)}$ be the subset of $K_{x}^{(r)}$ whose matrices have polynomials of degree less than or equal to $2(r-1)$ in the entries.
\end{prop}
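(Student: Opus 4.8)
The ``if'' direction is immediate: $K_{x,r}^{(r)}\subseteq K_x^{(r)}$, so any triple realising the second equivalence realises the first. All the content is the ``only if'' direction. Put $A:=\sum_{i=0}^{r-1}g_i\pi_x^i$ and $A':=\sum_{i=0}^{r-1}g_i'\pi_x^i$; since $g_0,g_0'\in\GL_2(\FF_q)$ the determinants are units of $\FF_q[[\pi_x]]$, so $A,A^{-1},A',(A')^{-1}\in M_2(\mathcal O_x)$ and $A,A'\in K_x$. Assume given a witness
\[ p_{nx}A=z\gamma\,p_{nx}A'k',\qquad k'\in K_x^{(r)},\ \gamma\in\Gamma_x,\ z\in Z_x;\]
we must produce another witness with $k'$ replaced by some $\bar k'\in K_{x,r}^{(r)}$, changing $\gamma$ and $z$ if necessary.

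\emph{Shape of any witnessing $\gamma$ (a valuation argument in the spirit of Proposition~\ref{prop-ramifequiv}).} Rearranging, $z\gamma=p_{nx}Mp_{nx}^{-1}$ with $M:=A(k')^{-1}(A')^{-1}\in\GL_2(\mathcal O_x)$. Conjugation by $p_{nx}=\mathrm{diag}(\pi_x^{-n},1)$ multiplies the $(2,1)$-entry by $\pi_x^n$, so the $(2,1)$-entry of $z\gamma$ has $v_x\ge n\ge1$; but in $Z_x\Gamma_x$ that entry equals $\zeta\gamma_{21}$ with $v_x(\zeta)=0$ (compare determinants: $v_x$ of the determinant is $-n$ on both sides) and $\gamma_{21}\in\FF_q[\pi_x^{-1}]$, hence of valuation $\le 0$ unless $\gamma_{21}=0$. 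Thus every witnessing $\gamma$ is upper triangular with $\gamma_{11},\gamma_{22}\in\FF_q^\times$ and $\gamma_{12}\in\FF_q[\pi_x^{-1}]$, and integrality of $p_{nx}^{-1}\gamma p_{nx}$ forces $\deg_{\pi_x^{-1}}\gamma_{12}\le n$. Examining the $(1,1)$-entry of $M$ (which is $\equiv(g_0g_0'^{-1})_{11}\bmod\pi_x$ and must be a unit) and comparing $M$ with $AA'^{-1}\bmod\pi_x^r$ (valid since $k'\equiv\id\bmod\pi_x^r$) yields $(g_0g_0'^{-1})_{11}\ne0$ together with a system of congruences modulo $\pi_x^r$ relating $(g_i)$, $(g_i')$, the diagonal of $\gamma$ and $z$; these are the only facts about the data that enter the argument.

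\emph{Reducing the degree of the $K$-component.} From the witness, $k'=z^{-1}(A')^{-1}\bigl(p_{nx}^{-1}\gamma^{-1}p_{nx}\bigr)A$, and $\Delta:=p_{nx}^{-1}\gamma^{-1}p_{nx}$ is upper triangular with constant diagonal and $(1,2)$-entry a polynomial in $\pi_x$. Hence $(A')^{-1}\Delta A=(\det A')^{-1}\,\mathrm{adj}(A')\,\Delta\,A$ is a scalar unit times a polynomial matrix. The strategy is to move inside the set of witnesses so as to lower its polynomial degree to $2(r-1)$: one may replace $\gamma$ by $\gamma\delta$ for a unipotent upper triangular $\delta=\bigl(\begin{smallmatrix}1&\beta\\0&1\end{smallmatrix}\bigr)\in\Gamma_x$ with $\beta\in\FF_q[\pi_x^{-1}]$ of degree $\le n-r$ (this keeps the new $K$-component in $K_x^{(r)}$ while modifying $\gamma_{12}$, hence $\Delta_{12}$), and rescale $z$ within its coset; the point is that $\mathrm{adj}(A')$ and $A$ each have degree $\le r-1$, so once the contribution of $\Delta_{12}$ is absorbed the $K$-component becomes a polynomial matrix of degree $\le(r-1)+(r-1)=2(r-1)$, i.e.\ lands in $K_x^{(r)}$ with bounded degree. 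One finally checks $\bar k'\equiv\id\bmod\pi_x^r$ using the congruences of the previous step and $A,A^{-1},A',(A')^{-1}\in M_2(\mathcal O_x)$, so $\bar k'\in K_{x,r}^{(r)}$. For $r=1$ nothing is left to do: $K_{x,1}^{(1)}=\{\id\}$ and the statement reduces to Proposition~\ref{prop-ramifequiv}.

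\emph{Main obstacle.} The crux is that these normalisations must be carried out \emph{simultaneously}: bounding $\deg\bar k'$ by $2(r-1)$ constrains the central parameter $z$ (essentially to $\det A'$ up to a constant) and the off-diagonal part of $\gamma$, whereas $\bar k'\in K_x^{(r)}$ is the congruence $z^{-1}\Delta A\equiv A'\pmod{\pi_x^r}$; one must verify that these are compatible. After reduction modulo $\pi_x^r$ this becomes a finite linear-algebra problem over $\FF_q$ — on exactly the $\FF_q^{4}$-worth of coefficient data discussed earlier in the paper — whose solvability is guaranteed precisely by the congruences that the \emph{existence} of the original witness imposes. Keeping track of the auxiliary cases (e.g.\ $n<r$, where $\delta$ must be trivial and one truncates $k'$ directly, or degeneracies of $g_0g_0'^{-1}$) is routine but has to be done by hand.
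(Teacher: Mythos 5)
Your reduction to the forward implication and your structural analysis of a witnessing $\gamma$ (the valuation argument forcing $\gamma_{21}=0$, so that $\gamma$ is upper triangular with diagonal in $\FF_q^{\times}$) are fine, up to a small slip: the determinant of $p_{nx}Mp_{nx}^{-1}$ has valuation $0$, not $-n$, which is in fact what you need to conclude $v_x(\zeta)=0$. The problem is the second half. Your strategy is to \emph{modify the witness} --- replace $\gamma$ by $\gamma\delta$ for a unipotent $\delta\in\Gamma_x$ and rescale $z$ --- so that the new $K$-component $z^{-1}(A')^{-1}\Delta A$ simultaneously (a) has polynomial entries of degree $\le 2(r-1)$ and (b) is $\equiv\id\bmod\pi_x^r$. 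You never establish that such a choice of $\delta$ and $z$ exists: your final paragraph explicitly defers this to a ``finite linear-algebra problem over $\FF_q$ whose solvability is guaranteed precisely by the congruences that the existence of the original witness imposes.'' That compatibility \emph{is} the content of the proposition, so as written the argument is circular at its crux, and it is not clear it can be closed: condition (b) amounts to $\Delta\equiv(\det A')^{-1}A'A^{-1}\bmod\pi_x^r$, while $\Delta=p_{nx}^{-1}\gamma^{-1}p_{nx}$ is constrained to be upper triangular with constant diagonal, and nothing in your setup shows these two requirements are simultaneously satisfiable. This is a genuine gap, not a routine verification.

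For comparison, the paper's proof does not touch $\gamma$ or $z$ at all: it simply truncates $k$ itself, writing $k=k'+\pi_x^{2(r-1)}k''$ with $k'$ the polynomial part of degree $\le 2(r-1)$, and shows that $p_{nx}Ak'Bp_{nx}^{-1}$ still lands in $\Gamma_x$ by the same Laurent-polynomial/degree argument used in Proposition~\ref{prop-ramifequiv} (the discarded tail has large positive $\pi_x$-valuation, which is incompatible with membership in $\GL_2(\FF_q[\pi_x^{-1}])$, the bound $2(r-1)$ arising as the degree of the product $AB$ of the two degree-$(r-1)$ polynomial matrices). Since $k\equiv\id\bmod\pi_x^r$, its truncation at degree $2(r-1)$ is automatically in $K_{x,r}^{(r)}$, so both defining conditions of $K_{x,r}^{(r)}$ come for free and no compatibility check is needed. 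If you want to salvage your route, you would have to carry out the deferred linear algebra explicitly; the truncation argument avoids it entirely.
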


\begin{rem}
    Since $K_{x,r}^{(r)}$ is not a group, it is not clear so far that the second binary relation is an equivalent relation. The following proof then also shows that this binary relation is an equivalence relation.
\end{rem}

\begin{proof}
Since $K_{x,r}^{(r)} \subseteq K_{x}^{(r)}$ we only need to prove the direct implication.
We suppose that $p_{nx}(\sum_{i=0}^{r-1}g_i\pi_x^i)\sim p_{nx}(\sum_{i=0}^{r-1}g_i'\pi_x^i)$  in  $\Gamma_x \setminus G_x/K_x^{(r,1)}$. Thus, there are $\gamma \in \Gamma_x$, $z \in Z_x$ and $k \in K_x^{(r)}$ such that
$$p_{nx}(\sum_{i=0}^{r-1}g_i\pi_x^i)k(\sum_{i=0}^{d-1}(g_i')^{-1}\pi_x^i)p_{nx}^{-1}=\gamma.$$
Since we are considering an equivalence in $\Gamma_x \setminus G_x/Z_xK_{x,r}^{(r)}$, we can assume is $z=1$.

Moreover, since $(\sum_{i=0}^{r-1}g_i'\pi_x^i)^{-1}$ is equivalent to $\sum_{i=0}^{r-1}(g_i')^{-1}\pi_x^i$ module $Z_x$, yields
$$p_{nx}(\sum_{i=0}^{r-1}g_i\pi_x^i)k(\sum_{i=0}^{r-1}g_i'^{-1}\pi_x^i)p_{nx}^{-1} \in \Gamma_x.$$
Thus, we might write $k=k'+\pi_x^{2(r-1)}k''$ with $k' \in M_2((\FF_q)[\pi_x])$ and $k'' \in M_2(\FF_q[[\pi_x]])$, where the entries of $k'$ are polynomials with coefficients in $\FF_q$ and degree $\leq 2(r-1)$. Therefore
$$p_{nx}(\sum_{i=0}^{r-1}g_i\pi_x^i)(k'+\pi_x^{2r-1}k'')(\sum_{i=0}^{r-1}g_i'^{-1}\pi_x^i)p_{nx}^{-1} \in \Gamma_x.$$
Since the coefficients of $(\sum_{i=0}^{r-1}g_i\pi_x^i)(\sum_{i=0}^{r-1}g_i'^{-1}\pi_x^i)$ are of degree lesser or equal to $2(r-1)$ it implies, with the same technique we use in proof of Proposition \ref{prop-ramifequiv}, that
$$p_{nx}(\sum_{i=0}^{r-1}g_i\pi_x^i)k'(\sum_{i=0}^{r-1}g_i'^{-1}\pi_x^i)p_{nx}^{-1} \in \Gamma_x$$
with $k'\in K_{x,r}^{(r)}$. 
\end{proof}

\begin{rem}
    The usefulness of this result is that $K_{x,r}^{(r)}$ is a finite set, which was not the case for $K_{x}^{(r)}$. Then it can be useful to use an informatics program, for instance.
\end{rem}

We say nothing more here about representation of $G(F) \setminus G(\mathbb{A})/ZK^{(r)}$. We hope that what we just saw can be useful and a good starting point for an exhaustive investigation on the question. 
Obviously, it would not be finished yet. 
The question of ramifications in different places (a finite number of them) i.e. $K'$ with $K_y' \neq K_y$ for different places $y$ would remain.

\subsection*{More on graphs of ramified Hecke operators}
We fix $x \in |\P^1|$ of degree one. In the following, we 
generalize Proposition \ref{propdecomp} to $K^{(x,r)}$.

\begin{prop} Let $x \in |\P^1|$ of degree one. Then 
    \begin{enumerate}[(i)]
        \item $K^{(x,r)}p_{y}^{-1}K^{(x,r)}=\coprod_{w\in \P^1(\kappa(y))} \xi_{y,w} K^{(x,r)},$ for all $ y\neq x.$ \\
        \item $K^{(x,r)}p_x^{-1}K^{(x,r)}=\coprod_{a \in \FF_q}\begin{pmatrix} \pi_x&a\pi_x^{r-1} \\ &1 \end{pmatrix} K^{(x,r)}.$
    \end{enumerate}
\end{prop}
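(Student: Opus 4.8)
The plan is to mirror the proof of Proposition \ref{propdecomp}, reducing each identity to a purely local statement and then analysing the relevant double coset in $G_x=\GL_2(F_x)$ (resp.\ $G_y$). The one genuinely new feature compared with the case $r=1$ is that the congruence defining $K_x^{(r)}$ is now taken modulo $\pi_x^r$ instead of $\pi_x$, so the whole computation must keep careful track of the exact power of $\pi_x$ appearing in each matrix entry.

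For part (i) I would first note that the place of ramification $x$ is disjoint from the support $y$ of $p_y^{-1}=\bigl(\begin{smallmatrix}\pi_y&\\&1\end{smallmatrix}\bigr)$. Since $K^{(x,r)}_z=K_z$ for every $z\neq x$, and both $p_y^{-1}$ and the $\xi_{y,w}$ are the identity at every place $\neq y$, the double coset $K^{(x,r)}p_y^{-1}K^{(x,r)}$ factors as a restricted product over places: it equals $K_x^{(r)}$ at $x$, it equals $K_yp_y^{-1}K_y$ at $y$, and it equals $K_z$ elsewhere. In particular $K^{(x,r)}_y=K_y$ for all $r$, so the statement is independent of $r$ and coincides with Proposition \ref{propdecomp}(i); the local decomposition $K_yp_y^{-1}K_y=\coprod_{w\in\P^1(\kappa(y))}\xi_{y,w}K_y$ is the standard unramified $\GL_2$ one, proved exactly as in \cite[Lemma 3.7]{gelbart-75}. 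Reassembling with the unchanged factors at the remaining places gives $\coprod_{w}\xi_{y,w}K^{(x,r)}$.

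For part (ii) the problem is genuinely local at $x$: by the same product argument the identity is equivalent to $K_x^{(r)}p_x^{-1}K_x^{(r)}=\coprod_{a\in\FF_q}\bigl(\begin{smallmatrix}\pi_x&a\pi_x^{r-1}\\&1\end{smallmatrix}\bigr)K_x^{(r)}$. I would then follow Proposition \ref{propdecomp} step by step. First invoke \cite[Lemma 2.5]{alvarenga19} to write $K_x^{(r)}p_x^{-1}K_x^{(r)}$ as a finite disjoint union of left cosets $\xi K_x^{(r)}$. For a representative $\xi=k_1p_x^{-1}k_2$ with $k_1,k_2\in K_x^{(r)}$, the Iwasawa decomposition together with $K_x=\coprod_g gK_x^{(r)}$ reduces $\xi$ to an upper triangular matrix $\bigl(\begin{smallmatrix}P&Q\\&S\end{smallmatrix}\bigr)$; taking determinants gives $PS=\pi_x$ up to the center, and reduction modulo $\pi_x$ (inherited from reduction modulo $\pi_x^r$) forces $P=\pi_x$, $S=1$. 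The remaining task is to decide which off-diagonal entries $Q$ yield distinct cosets, for which I would compute
\[
\begin{pmatrix}\pi_x&Q\\&1\end{pmatrix}^{-1}\begin{pmatrix}\pi_x&R\\&1\end{pmatrix}=\begin{pmatrix}1&(R-Q)\pi_x^{-1}\\&1\end{pmatrix}
\]
and determine the exact power of $\pi_x$ that must divide $R-Q$ for this to lie in $K_x^{(r)}$. As an independent numerical check I would compute the index $[K_x^{(r)}:K_x^{(r)}\cap p_xK_x^{(r)}p_x^{-1}]$, which equals $q$ because conjugation by $p_x$ changes only one off-diagonal congruence level by one; this confirms that there are exactly $q=\#\FF_q$ cosets, matching the indexing by $a\in\FF_q$.

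The main obstacle is precisely this last point: pinning down the exact power of $\pi_x$ carried by the off-diagonal entry of the canonical representatives, i.e.\ upgrading the "$\pi_x\mid Q-R$" criterion of Proposition \ref{propdecomp} to the correct level-$r$ statement. This is the only place in the argument that is sensitive to $r$, it is where the $r=1$ case must reproduce Proposition \ref{propdecomp}, and it is exactly where the delicate exponent bookkeeping lives; once the admissible off-diagonal entries are identified, disjointness is immediate from the coset criterion, and the reverse inclusion is obtained by exhibiting, for each $a\in\FF_q$, an explicit factorization of $\bigl(\begin{smallmatrix}\pi_x&a\pi_x^{r-1}\\&1\end{smallmatrix}\bigr)$ as $k_1p_x^{-1}k_2$ with $k_1,k_2\in K_x^{(r)}$. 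The reduction-to-local step itself is routine, following the product structure of $K^{(x,r)}$ and the single-place support of $p_x^{-1}$.
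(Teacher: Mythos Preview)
Your approach is exactly the paper's: reduce each statement to the single place in question and rerun the argument of Proposition \ref{propdecomp}, with the sole $r$-dependent change being the coset criterion for the off-diagonal entry (the paper records it as $\pi_x^r\mid Q-R$ in place of $\pi_x\mid Q-R$). The paper's proof is literally that one-line remark, so your explicit local reduction, the coset computation, and the index check $[K_x^{(r)}:K_x^{(r)}\cap p_xK_x^{(r)}p_x^{-1}]=q$ already go beyond what the paper supplies; your singling out of the off-diagonal exponent as the unique delicate point is precisely the content of the paper's proof.
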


\begin{proof}
    The proof is exactly analogous to Proposition \ref{propdecomp}.
    The only difference is that in this case, 
    \[\begin{pmatrix} \pi_x&Q \\ &1 \end{pmatrix}K^{(x,r)}=\begin{pmatrix} \pi_x&R \\ &1 \end{pmatrix}K^{(x,r)}\]
    if and only if $\pi_x^r$ divides $Q-R$.
\end{proof}

\begin{rem}
    We state the above proposition for a degree one place because we guess that the similarities between $(d,r)$ and $(1,rd)$ still hold, not only for the vertices, but also for the computing of graphs.
   \end{rem}

    We consider  the Hecke operator $\Phi_y^{(x,r)}$ given by the characteristic function on 
    \[ K^{(x,r)}\begin{pmatrix} \pi_y &  \\ &1 \end{pmatrix} K^{(x,r)}\]
divided by the volume of $K^{(x,r)}.$ Besides, there are different Hecke operators in $\cH_{K^{(x,r)}}$, these are the generalization of the generators of the unramified Hecke algebra, cf. section \ref{sec-unramified}.
The following theorem is an analogous to Theorem \ref{thm-ramifiededges}.

\begin{thm} \label{thm-generalramifiedgraphs}
 Let $x,y \in |\P^1|$ with $|x|=1$ and $y \neq x$.
    \begin{enumerate}[(i)]
        \item The set of $\Phi_y^{(x,r)}$-neighbors of $g \in G(\A)$   is 
        \[\big\{g(\xi_{y,w})_y \;|\; w\in \P^1(\kappa(y))\big\},\] 
        Moreover, for every $w \in \P^1(\kappa(y))$ 
        \[ m(g,g\xi_{y,w})=\#\big\{w'\in \P^1(\kappa(y)) \;  | \; g \xi_{y,w'} \sim g\xi_{y,w} \}.\]
        Therefore, every point has $q^{|y|}+1$ $\Phi_y^{(x,r)}$-neighbors when counted with multiplicity.
        \item The set of $\Phi_y^{(x,r)}$-neighbors of $g \in G(\A)$  is 
        \[ \Big\{ g { \tiny \begin{pmatrix} \pi_x&a\pi_x \\ &1 \end{pmatrix} }\;|\; a\in \FF_q\Big\}.\] 
        Moreover, for every $a \in \FF_q$
        \[ m(g,g { \tiny \begin{pmatrix} \pi_x&a\pi_x^{d-1} \\ &1 \end{pmatrix}})=\#\Big\{a'\in \FF_q \;\big|\; g{ \tiny \begin{pmatrix} \pi_x&a'\pi_x \\ &1 \end{pmatrix}} \sim g { \tiny  \begin{pmatrix} \pi_x&a\pi_x \\ &1 \end{pmatrix}} \Big\}.\]
        Therefore, every point has $q$ \; $\Phi_y^{(x,r)}$-neighbors when counted with multiplicity.
    \end{enumerate}
   $ \hfill \square$
\end{thm}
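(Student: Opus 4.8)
The plan is to imitate the proof of Theorem \ref{thm-ramifiededges}, since the only thing that changed is the ramification subgroup. First I would invoke the preceding proposition, which gives the coset decompositions
\[ K^{(x,r)}\begin{pmatrix} \pi_z & \\ & 1 \end{pmatrix} K^{(x,r)} = \coprod_{i=1}^{n} g_i\, K^{(x,r)}, \]
where the $g_i$ are $\{\xi_{y,w}\}_{w\in \P^1(\kappa(y))}$ in case $z=y\neq x$, and $\big\{{\tiny\begin{pmatrix}\pi_x & a\pi_x^{r-1}\\ & 1\end{pmatrix}}\big\}_{a\in\FF_q}$ in case $z=x$. Then, exactly as in the proof of Theorem \ref{thm-ramifiededges}, for every $f \in \cA^{K^{(x,r)}}$ one computes
\begin{align*}
\Phi_z^{(x,r)}(f)(g) &= (1/\mathrm{vol}\,K^{(x,r)})\int_{G(\A)} \mathrm{char}\Big(K^{(x,r)}\begin{pmatrix}\pi_z & \\ & 1\end{pmatrix}K^{(x,r)}\Big)(h)\, f(gh)\, dh \\
&= (1/\mathrm{vol}\,K^{(x,r)}) \sum_{i=1}^{n} \int_{g_i K^{(x,r)}} f(gh)\, dh = \sum_{i=1}^{n} f(g g_i),
\end{align*}
using that $f$ is right $K^{(x,r)}$-invariant and that $\mathrm{vol}\,K^{(x,r)} = \mathrm{vol}(g_i K^{(x,r)})$ by right-invariance of the Haar measure.

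By Proposition \ref{propfund} and Definition \ref{defgraphs}, this shows that the $\Phi_z^{(x,r)}$-neighbors of $g$ are exactly the distinct classes among $\{g g_i\}$, and that the multiplicity of the edge to a given neighbor is the number of indices $i$ landing in that class; this is precisely the counting formula in the statement, with $\sim$ denoting equivalence in $G(F)\setminus G(\A)/ZK^{(x,r)}$. The total count with multiplicity is then simply $n$, which is $q^{|y|}+1 = \#\P^1(\kappa(y))$ when $z=y\neq x$ and $q$ when $z=x$.

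I do not expect any real obstacle here: the argument is formally identical to Theorem \ref{thm-ramifiededges}, the coset decomposition being the only input and having just been established. The one point deserving a line of care is that the equality $n = q^{|x|\cdot|y|}+1$ in part (i) requires $\pi_y$ to be chosen so that $\kappa(y)$ is genuinely a degree-$|y|$ extension of $\Fq$, which is automatic; and in part (ii) the value $q$ (rather than $q^{|y|}$) comes from the fact that the ramification is at $x$, a degree-one place, so the parabolic coset at $x$ has only $q$ classes. Otherwise the proof is a verbatim transcription, which is why the excerpt marks it with $\hfill\square$.
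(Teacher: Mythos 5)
Your proposal is correct and is exactly the argument the paper intends: the theorem is stated with a terminal $\square$ precisely because it follows from the preceding coset decomposition by the same unfolding computation as in Theorem \ref{thm-ramifiededges}, which is what you reproduce. (Minor note: the exponent discrepancies $a\pi_x$ versus $a\pi_x^{r-1}$ in the statement are typos in the paper; your version, matching the preceding proposition, is the consistent one.)
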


\begin{rem}
   Analogous to Remark \ref{rem-calculation}, given a systems of representatives that are coherent with the applications $P_{i,j}$, we find the class of an adelic matrix in the double coset $G(F) \setminus G(\mathbb{A})/ZK^{(x,r)}$. In order to do that, we project consecutively on the double cosets $G(F) \setminus G(\mathbb{A})/ZK$, \ldots , $G(F) \setminus G(\mathbb{A})/ZK^{(x,r-1)}$.
\end{rem}

\subsection*{More on ramified Hecke eigenspaces}
We end this article with a couple of  conjectures about ramified Hecke eigenspaces. It is analogous to Theorems \ref{thm-mainramified1} and \ref{thm-mainramified2}.

We observe that in both unramified and ramified cases, what allowed us to compute the dimension of eigenspaces was the generic shape of the graph that we draw on Figure \ref{fig6} for the ramified case. We think that such a process can be generalized to $K^{(x,r)}$ for $x$ a degree one place. We fix in the following $x \in |P^1|$ be a degree one place. 
If we observe the graph of $\Phi_y$ or $\Phi_y'$ or $y \neq x$, we can see some ``branches'' going to infinite. Remark that the dimension of the correspondent eigenspace is always the number of such ``branches''.

In order to state the conjecture about the dimension of the ramified Hecke eigenspaces, we first define more precisely what we mean about the ``branches''.
Therefore, we first state a  conjecture which define the objects.

\begin{conj}
For every $r \in \N$, it exists $m \in\ N$ such that for every $n \geq m$, we have $|P_r^{-1}(c_{nx})|=|P_r^{-1}(c_{mx})|$, where $P_r$ are the canonical projection defined at the begging of this section. 
\end{conj}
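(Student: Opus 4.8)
The plan is to show that the fibers $P_r^{-1}(c_{nx})$ stabilize by relating the double coset $G(F) \setminus G(\mathbb{A})/Z K^{(x,r)}$ to the Strong Approximation picture and analyzing which matrices $p_{nx} k$, with $k$ ranging over a lift $K_{r,0}$ of $K_x/K_x^{(x,r)}$, become equivalent. First I would invoke the generalization of Proposition \ref{prop-ramifstrongaprox} proved earlier in this section, so that $P_r^{-1}(c_{nx})$ is identified with a subset of $\Gamma_x \setminus G_x / Z_x K_x^{(x,r)}$, namely the set of classes lying above $c_{nx} \in \Gamma_x \setminus G_x / Z_x K_x$. By Lemma \ref{lemma3.3} (suitably generalized via Remark \ref{embeddings4.1}, using the lift $K_{r,0}$ of $K_x/K_x^{(x,r)}$), we get $P_r^{-1}(c_{nx}) = \{ (p_{nx} k)_x \mid k \in K_{r,0} \}$, and the size of this fiber is the number of equivalence classes of the relation $p_{nx} k \sim p_{nx} k'$ in $\Gamma_x \setminus G_x / Z_x K_x^{(x,r)}$ as $k, k'$ range over $K_{r,0}$.

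The key point is that this equivalence relation is eventually independent of $n$. Concretely, $p_{nx} k \sim p_{nx} k'$ means there is $\gamma \in \Gamma_x$, $z \in Z_x$, $\kappa \in K_x^{(x,r)}$ with $p_{nx} k \kappa (k')^{-1} p_{nx}^{-1} = z\gamma$; conjugating, $\gamma = z^{-1} p_{nx} (k \kappa (k')^{-1}) p_{nx}^{-1}$ lies in $\Gamma_x = \GL_2(\FF_q[\pi_x^{-1}])$. Writing the conjugation by $p_{nx} = \mathrm{diag}(\pi_x^{-n}, 1)$ explicitly, the $(1,2)$ entry is multiplied by $\pi_x^{-n}$ and the $(2,1)$ entry by $\pi_x^{n}$. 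Since $k\kappa(k')^{-1} \in K_x$ has entries in $\FF_q[[\pi_x]]$, the condition that the conjugate has entries in $\FF_q[\pi_x^{-1}]$ forces: the $(2,1)$-entry of $k\kappa(k')^{-1}$ must be divisible by $\pi_x^{n}$, while the other entries are automatically controlled once $n$ is large. I would argue that for $n$ sufficiently large — larger than the degree in $\pi_x$ of all entries of matrices in $K_{r,0}$ and of the relevant truncations of $\kappa \in K_x^{(x,r)}$ (here one may replace $K_x^{(x,r)}$ by the finite truncation $K_{x,r}^{(x,r)}$ from the Proposition just above, since that Proposition shows the relation is unchanged) — the divisibility condition on the $(2,1)$-entry becomes "the $(2,1)$-entry vanishes identically," which no longer references $n$. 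Hence for all $n \ge m$ the equivalence relation on $K_{r,0}$ is literally the same relation, so $|P_r^{-1}(c_{nx})|$ is constant.

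In more detail, the stabilization index $m$ can be taken as: (twice) the maximal $\pi_x$-degree occurring among entries of the finitely many matrices $k (k')^{-1}$ with $k, k' \in K_{r,0}$ — a finite set by Remark \ref{embeddings4.1} — plus a bound coming from $K_{x,r}^{(x,r)}$, which is finite by the Proposition above. For such $m$ one checks directly that for $n \geq m$ the condition "$p_{nx} k \kappa (k')^{-1} p_{nx}^{-1} \in \Gamma_x$ for some $\kappa \in K_{x,r}^{(x,r)}$" is equivalent to the $n$-free condition "$p_{mx} k \kappa (k')^{-1} p_{mx}^{-1} \in \Gamma_x$ for some $\kappa \in K_{x,r}^{(x,r)}$" (both amounting to: the $(2,1)$-entry of $k\kappa(k')^{-1}$ is $0$ and the $(1,2)$-entry has $\pi_x$-degree bounded appropriately, neither of which sees $n$ once $n$ exceeds the stated bound). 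This gives a bijection $P_r^{-1}(c_{nx}) \cong P_r^{-1}(c_{mx})$ for all $n \geq m$, which is the claim.

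\textbf{Main obstacle.} The delicate step is making the $n$-free reformulation of the equivalence relation genuinely precise: one must be careful that when $n$ grows, the divisibility constraint $\pi_x^{n} \mid (2,1)\text{-entry of } k\kappa(k')^{-1}$ is not merely "harder" but actually \emph{saturates} to "$(2,1)$-entry $= 0$," which requires knowing the entries of $k\kappa(k')^{-1}$ have bounded $\pi_x$-degree — this is exactly why the reduction to the finite set $K_{x,r}^{(x,r)}$ (from the Proposition just above) is essential, and why one cannot argue naively with the full infinite group $K_x^{(x,r)}$. Handling the $(1,2)$-entry and the diagonal entries symmetrically, and checking the $Z_x$-ambiguity does not reintroduce $n$-dependence, are the remaining bookkeeping points.
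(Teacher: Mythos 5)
The first thing to say is that the paper does not prove this statement: it is introduced and left precisely as a conjecture (the authors only confirm it a posteriori for $r=0$ and $r=1$ by pointing back at their explicit computations), so there is no proof of theirs to compare yours against. Judged on its own, your proposal has the right skeleton --- reduce via the generalized strong approximation and the generalized version of Lemma \ref{lemma3.3} to counting classes of a relation $R_n$ on the finite set $K_{r,0}$, then show the $n$-dependence of $R_n$ dies out --- and I believe it can be completed. But you have misdiagnosed where the $n$-dependence actually sits, and this makes your ``main obstacle'' look harder than it is. Write $A=k\kappa^{-1}(k')^{-1}\in K_x$ with entries $a,b,c,d\in\FF_q[[\pi_x]]$ and ask when $z\,p_{nx}Ap_{nx}^{-1}\in\Gamma_x=\GL_2(\FF_q[\pi_x^{-1}])$. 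Taking determinants forces $z\in\cO_x^{\times}$; then $za,zd\in\FF_q[\pi_x^{-1}]\cap\cO_x=\FF_q$; the $(2,1)$-entry of the conjugate is $zc\pi_x^{n}$, which has valuation $\geq n\geq 1$ if nonzero, while every nonzero element of $\FF_q[\pi_x^{-1}]$ has valuation $\leq 0$ --- so $c=0$ outright, for every $n\geq 1$, with no saturation and no bounded-degree input needed; and the $(1,2)$-entry condition is that $zb$ be a polynomial in $\pi_x$ of degree at most $n$. Thus the only condition that sees $n$ is a degree bound that only gets weaker as $n$ grows: any witness $(z,\kappa)$ at level $n$ is a witness at level $n+1$, so $R_n\subseteq R_{n+1}$, the number of classes is non-increasing in $n$, and a non-increasing sequence of positive integers is eventually constant. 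This monotonicity-plus-finiteness argument dispenses entirely with the need to make the relation literally $n$-free, hence with the reduction to the finite truncation $K^{(r)}_{x,r}$ (which would only matter if you wanted to compute the optimal $m_r$).

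Two points in your write-up need repair if you keep your version rather than the monotone one. First, your claim that the entries of $k\kappa(k')^{-1}$ have bounded $\pi_x$-degree after restricting $\kappa$ to $K^{(r)}_{x,r}$ is not immediate: inverting a polynomial matrix divides by its determinant, which is a non-polynomial unit of $\cO_x$; you must pass to the adjugate and absorb $\det(\kappa)^{-1}$ into the central factor $z$ before any degree bound makes sense. Second, the identification $P_r^{-1}(c_{nx})=\{(p_{nx}k)_x\mid k\in K_{r,0}\}$ and the passage to the relation on $K_{r,0}$ silently use that $K^{(x,r)}_x$ is normal in $K_x$ (true, as the kernel of reduction modulo $\pi_x^{r}$) and the injectivity of the map $\Gamma_x\setminus G_x/Z_xK^{(x,r)}_x\to G(F)\setminus G(\A)/ZK^{(x,r)}$ from the section's generalized strong approximation; both should be cited explicitly, and the $Z_x$-ambiguity is exactly what the determinant step above controls.
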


Let us fix $r \in \N$. If the above conjecture holds, we can consider a minimal such $m \in\ N$, which we denote by $m_r$.
Define $C_r :=G(F) \setminus G(\mathbb{A})/ZK-\{c_0,c_x,...,c_{m_r x}\}$.

\begin{conj} \label{conj-dimension}
For every $r \in \N$ and $\lambda \in \C^{\times}$,  
\[\dim \cA^{K^{(x,r)}}(\Phi_y^{(x,r)}, \lambda)=|y|.|P_r^{-1}(C_r)/C_r|.\]
\end{conj}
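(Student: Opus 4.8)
\textbf{Proof plan for Conjecture \ref{conj-dimension}.}
The strategy mirrors the proofs of Theorems \ref{thm-mainunramified} and \ref{thm-mainramified2}, where the dimension of the eigenspace is exactly the number of ``branches'' of the graph $\cG_{\Phi_y^{(x,r)},K^{(x,r)}}$, i.e.\ the number of infinite rays escaping the nucleus. The first step is to make precise the notion of branch. Assuming the preceding conjecture, fix $r$ and the minimal $m_r$ stabilizing the fibre cardinalities; set $C_r := G(F) \setminus G(\mathbb{A})/ZK - \{c_0,\dots,c_{m_r x}\}$. The projection $P_r$ restricts to a map $P_r^{-1}(C_r) \to C_r$, and the quotient $P_r^{-1}(C_r)/C_r$ counts the distinct ``threads'' of ramified vertices lying over each $c_{nx}$, $n > m_r$; one branch of the graph is such a thread together with its choice of place direction. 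So the claimed dimension is $|y| \cdot |P_r^{-1}(C_r)/C_r|$, where the factor $|y|$ accounts for the $|y|$ outgoing edges of multiplicity $1$ at each vertex (this is the generalisation of the nucleus-exit edge $c_{nx} \rightsquigarrow c_{(n+|y|)x}$ appearing in Proposition \ref{prop-edgesnonramified}(i) and Proposition \ref{prop-3.16}(i)).

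Next I would establish the structural statement generalizing Proposition \ref{prop-edgesnonramified} and Proposition \ref{prop-3.16}: for $n$ large enough (beyond $m_r$, and beyond $|y| \cdot m_r$ to be safe), every vertex $v$ over $c_{nx}$ has exactly two $\Phi_y^{(x,r)}$-neighbors, namely one vertex over $c_{(n-|y|)x}$ with multiplicity $q^{|y|}$ and one over $c_{(n+|y|)x}$ with multiplicity $1$, and moreover the assignment $v \mapsto (\text{neighbor over } c_{(n\pm|y|)x})$ is compatible with the branch structure, i.e.\ respects the equivalence classes in $P_r^{-1}(C_r)/C_r$. This should follow from Theorem \ref{thm-generalramifiedgraphs}(i) together with the computation that for $n \geq |y|$ one has
\[
\begin{pmatrix} \pi_x^{|y|-n} & \ast \\ & 1 \end{pmatrix} \xi \sim p_{(n-|y|)x}\,\xi'
\quad\text{and}\quad
\begin{pmatrix} \pi_x^{-n} & \\ & \pi_x^{|y|} \end{pmatrix}\xi \sim p_{(n+|y|)x}\,\xi',
\]
exactly as in the proof of Proposition \ref{prop-3.16}, where the tail $\pi_x^{n-|y|}$ absorbs the off-diagonal perturbation into $K^{(x,r)}$ once $n - |y| \geq r-1$. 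Then I would define the nucleus $\mathfrak{N}$ to be the full subgraph on the finitely many vertices over $c_0,\dots,c_{m'x}$ for a suitable threshold $m'$ (taking $m'$ large enough that all irregular behaviour is confined inside), and the branches to be the $|y| \cdot |P_r^{-1}(C_r)/C_r|$ rays leaving it.

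With this in hand, the proof of the isomorphism $\cA^{K^{(x,r)}}(\Phi_y^{(x,r)},\lambda) \xrightarrow{\sim} \bigoplus_{\mathfrak{N}}\C$ runs verbatim as in Theorem \ref{thm-mainunramified} and Theorem \ref{thm-mainramified2}: \emph{injectivity} because an eigenform is determined on each branch by a linear recursion $f(v_{k+1}) = \lambda f(v_k) + q^{|y|} f(v_{k-1})$ (reading the eigenvalue equation along the branch, using that the in-nucleus contributions are fixed) once its values on $\mathfrak{N}$ are known, propagated by induction on the distance to $\mathfrak{N}$; \emph{surjectivity} because, given arbitrary values on $\mathfrak{N}$, one solves the eigenvalue equation at each vertex of $\mathfrak{N}$ adjacent to a branch to define $f$ on the first branch vertex, then extends along each branch by the same recursion, and checks the resulting function has moderate growth exactly as in the proof of Theorem \ref{thm-mainramified1} (choosing $m$ with $|\lambda| q^{m-1} > 1$, noting $|c_{nx,\ast}'| = q^n$ and $|f|$ grows like $(q^{|y|}/|\lambda|)^{n/|y|}$ up to a bounded factor). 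Hence $\dim \cA^{K^{(x,r)}}(\Phi_y^{(x,r)},\lambda)$ equals the number of branches, which is $|y|\cdot|P_r^{-1}(C_r)/C_r|$.

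\textbf{The main obstacle.} The hard part is not the recursion-and-growth argument, which is routine given the graph shape, but proving that the graph actually \emph{has} this shape — i.e.\ the stabilization of fibre cardinalities (the preceding conjecture) together with the claim that, past the threshold, each ramified vertex over $c_{nx}$ has a \emph{single} neighbor over $c_{(n+|y|)x}$ lying in a well-defined branch class, with no merging or splitting. This requires controlling the equivalence relation on $M_2(\kappa(x))^{\oplus r}$ (equivalently on $\FF_{q}^{4r}$) induced by $\Gamma_x \setminus G_x / Z_x K_x^{(x,r)}$ on the fibres $P_r^{-1}(c_{nx})$, and showing it is eventually independent of $n$ and compatible with the Hecke correspondence; the reduction of Proposition to the finite set $K_{x,r}^{(r)}$ should help make this a finite (in principle computable) check, but turning it into a clean uniform-in-$n$ argument is where the real work lies, and is presumably why the statement is left as a conjecture.
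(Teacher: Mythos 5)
The statement you are addressing is stated in the paper as a \emph{conjecture}: the authors give no proof of it, only two remarks verifying consistency with the already-proved cases $r=0$ (Theorem \ref{thm-mainunramified}) and $r=1$ (Theorem \ref{thm-mainramified2}). Your outline follows exactly the strategy those theorems suggest --- identify a finite nucleus, show every vertex past it has precisely one neighbor of multiplicity $1$ further out and one of multiplicity $q^{|y|}$ further in, and then run the determined-by-the-nucleus recursion plus the moderate-growth estimate. That part of your plan is sound and is evidently what the authors have in mind.

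However, the proposal is a plan, not a proof, and you have correctly located the gap yourself: everything hinges on unestablished structural facts about $\cG_{\Phi_y^{(x,r)},K^{(x,r)}}$, namely (a) the stabilization of the fibre cardinalities $|P_r^{-1}(c_{nx})|$ for large $n$ (which is itself only the \emph{preceding} conjecture in the paper), and (b) the claim that past the threshold each ramified vertex has a single well-defined successor over $c_{(n+|y|)x}$, with branches neither merging nor splitting, so that the number of infinite rays is exactly $|y|\cdot|P_r^{-1}(C_r)/C_r|$. Neither is proved in the paper for $r\geq 2$, and your sketch does not supply an argument for them either: controlling the equivalence relation that $\Gamma_x\setminus G_x/Z_xK_x^{(x,r)}$ induces on the fibres, uniformly in $n$ and compatibly with the Hecke correspondence of Theorem \ref{thm-generalramifiedgraphs}, is precisely the open content of the conjecture. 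Also note two smaller issues in your write-up: the quotient notation $P_r^{-1}(C_r)/C_r$ is not literally a quotient by a group action and needs the branch equivalence to be defined before it can be counted; and your recursion should read $f(v_{k+1})=\lambda f(v_k)-q^{|y|}f(v_{k-1})$ (minus sign), as in the surjectivity step of Theorem \ref{thm-mainramified2}. As it stands the proposal cannot be accepted as a proof of the conjecture.
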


\begin{rem}  
For $r=0$, i.e. in the unramified case, we have $m_0=0$ and $$|(G(F) \setminus G(\mathbb{A})/ZK)/(G(F) \setminus G(\mathbb{A})/ZK)|=1.$$ 
Then we find back Theorem \ref{thm-mainunramified}.
\end{rem}

\begin{rem}
For $r=1$, we have $m_1=1$ and 
$$|(G(F) \setminus G(\mathbb{A})/ZK^{(x,1)}-\{c_0'\})/(G(F) \setminus G(\mathbb{A})/ZK-\{c_0\})|=|\P^1(\FF_q)|=q+1.$$
Then we find back Theorem \ref{thm-mainramified2}.
\end{rem}

\section*{Acknowledgements} The first author was supported by FAPESP 
grant number 2022/09476-7. This research project took place in the context of an internship of the second author in ICMC-USP with the first author. Second author would like to thank the warm welcoming of ICMC-USP. Both authors thank the reviewer for the useful comments and suggestions.


\begin{thebibliography}{BCDT01}

\bibitem[AJ23]{alvarenga-pereira-23}
R.~Alvarenga and V.~P. J\'unior.
\newblock On unramified automorphic forms over the projective line.
\newblock Online available at \url{https://arxiv.org/pdf/2301.11369.pdf}, 2023.

\bibitem[Alv19]{alvarenga19}
R.~Alvarenga.
\newblock On graphs of {Hecke} operators.
\newblock {\em J. Number Theory}, 199:192--228, 2019.

\bibitem[Alv20]{alvarenga20}
Roberto Alvarenga.
\newblock Hall algebras and graphs of {Hecke} operators for elliptic curves.
\newblock {\em Isr. J. Math.}, 239(1):215--269, 2020.

\bibitem[BCDT01]{bcdt-01}
C.~Breuil, B.~Conrad, F.~Diamond, and R.~Taylor.
\newblock On the modularity of elliptic curves over {{\(\mathbb Q\)}}: wild
  3-adic exercises.
\newblock {\em J. Am. Math. Soc.}, 14(4):843--939, 2001.

\bibitem[Bum97]{bump}
D.~Bump.
\newblock {\em Automorphic forms and representations}, volume~55 of {\em
  Cambridge Studies in Advanced Mathematics}.
\newblock Cambridge University Press, Cambridge, 1997.

\bibitem[Dri83]{drinfeld83}
V.~G. Drinfeld.
\newblock Two-dimensional {{\(\ell\)}}-adic representations of the fundamental
  group of a curve over a finite field and automorphic forms on
  {{\(\mathrm{GL}(2)\)}}.
\newblock {\em Am. J. Math.}, 105:85--114, 1983.

\bibitem[Gel75]{gelbart-75}
S.~S. Gelbart.
\newblock {\em Automorphic forms on adele groups}, volume~83 of {\em Ann. Math.
  Stud.}
\newblock Princeton University Press, Princeton, NJ, 1975.

\bibitem[Laf02]{lafforgue-02}
L.~Lafforgue.
\newblock Chtoucas de {D}rinfeld et correspondance de {L}anglands.
\newblock {\em Invent. Math.}, 147(1):1--241, 2002.

\bibitem[Lau97]{Laumon97}
G.~Laumon.
\newblock {\em Cohomology of {D}rinfeld modular varieties. {P}art {II}},
  volume~56 of {\em Cambridge Studies in Advanced Mathematics}.
\newblock Cambridge University Press, Cambridge, 1997.
\newblock Automorphic forms, trace formulas and Langlands correspondence, With
  an appendix by Jean-Loup Waldspurger.

\bibitem[Li79]{li-79}
W.-C.~W. Li.
\newblock Eisenstein series and decomposition theory over function fields.
\newblock {\em Math. Ann.}, 240:115--139, 1979.

\bibitem[Lor08]{oliver-thesis}
O.~Lorscheid.
\newblock {\em Toroidal Automorphic Forms for Function Fields}.
\newblock \url{http://w3.impa.br/~lorschei/thesis.pdf}. 2008.

\bibitem[Lor12]{oliver-elliptic}
O.~Lorscheid.
\newblock Automorphic forms for elliptic function fields.
\newblock {\em Math. Z.}, 272(3-4):885--911, 2012.

\bibitem[Lor13a]{oliver-graphs}
O.~Lorscheid.
\newblock Graphs of {H}ecke operators.
\newblock {\em Algebra Number Theory}, 7(1):19--61, 2013.

\bibitem[Lor13b]{oliver-toroidal}
O.~Lorscheid.
\newblock Toroidal automorphic forms for function fields.
\newblock {\em Israel J. Math.}, 194(2):555--596, 2013.

\bibitem[Sat63]{satake63}
I.~Satake.
\newblock Theory of spherical functions on reductive algebraic groups over
  p-adic fields.
\newblock {\em Publ. Math., Inst. Hautes {\'E}tud. Sci.}, 18:229--293, 1963.

\bibitem[TW95]{taylor-wiles-95}
R.~Taylor and A.~Wiles.
\newblock Ring-theoretic properties of certain {Hecke} algebras.
\newblock {\em Ann. Math. (2)}, 141(3):553--572, 1995.

\bibitem[Wil95]{wiles}
A.~Wiles.
\newblock Modular elliptic curves and {F}ermat's last theorem.
\newblock {\em Ann. of Math. (2)}, 141(3):443--551, 1995.

\end{thebibliography}
\end{document}